\date{October 02, 2015}
\def\grant{%
  The first author was partly supported by CAPES and JSPS under
  Brazil-Japan research cooperative program, Proc BEX 12998/12-5.
  The second author was
  partly supported by Grant-in-Aid for Scientific Research
  (C) No.\ 26400087
  from the Japan Society for the Promotion of Science,
  the third author by (A) No.\ 26247005 and
  the fourth author by (C) No.\ 26400087 from the Japan
  Society for the
  Promotion of Science.
}
\title[Behavior of Gaussian and mean curvature]{%
    Behavior of Gaussian curvature and mean curvature
    near non-degenerate
    singular points on wave fronts
}
\author[L. F. Martins]{Luciana F. Martins}
\address[Martins]{%
  Departamento de Matem\'atica,
  IBILCE - UNESP
  R. Cristovao Colombo, 2265, CEP 15054-000,
  Sao Jose do Rio Preto, SP, Brazil}
\email{lmartins@ibilce.unesp.br}
\author[K. Saji]{Kentaro Saji}
\address[Saji]{%
  Department of Mathematics,
  Faculty of Science,
  Kobe University,
  Rokko, Kobe 657-8501, Japan}
\email{saji@math.kobe-u.ac.jp}
\author[M. Umehara]{Masaaki Umehara}
\address[Umehara]{%
  Department of Mathematical and Computing Sciences,
  Tokyo Institute of Technology,
  Tokyo 152-8552, Japan}
\email{umehara@is.titech.ac.jp}
\author[K. Yamada]{Kotaro Yamada}
\address[Yamada]{%
  Department of Mathematics,
  Tokyo Institute of Technology,
  Tokyo 152-8551, Japan}
\email{kotaro@math.titech.ac.jp}
\keywords{
  {singularities},
  {wave front},
  {cuspidal edge},
  {swallowtail},
  {cuspidal cross cap},
  {Gaussian curvature},
  {mean curvature}
}
\subjclass[2010]{53A10, 53A35; 53C42, 33C05}
\thanks{\grant}
\title*{
    Behavior of Gaussian curvature and mean curvature
    near non-degenerate
    singular points on wave fronts
}
\titlerunning{Behavior of Gaussian and mean curvature}
\author{L. F. Martins, K. Saji, M. Umehara and K. Yamada}
\authorrunning{L. F. Martins et.\ al.}
\institute{%
   Luciana F. Martins \at 
   Departamento de Matem\'atica,
   IBILCE - UNESP
   R. Cristovao Colombo, 2265, CEP 15054-000,
   Sao Jose do Rio Preto, SP, Brazil,
   \email{lmartins@ibilce.unesp.br}
 \and   
   Kentaro Saji \at
   Department of Mathematics,
   Faculty of Science,
   Kobe University,
   Rokko, Kobe 657-8501, Japan
   \email{saji@math.kobe-u.ac.jp}
 \and
   Masaaki Umehara \at
   Department of Mathematical and Computing Sciences,
   Tokyo Institute of Technology,
   Tokyo 152-8552, Japan,
   \email{umehara@is.titech.ac.jp}
 \and 
   Kotaro Yamada \at
   Department of Mathematics,
   Tokyo Institute of Technology,
   Tokyo 152-8551, Japan,
   \email{kotaro@math.titech.ac.jp}
}
\def\thanks{
  The first author was partly supported by CAPES and JSPS under
  Brazil-Japan research cooperative program, Proc BEX 12998/12-5.
  The second author was
  partly supported by Grant-in-Aid for Scientific Research
  (C) No.\ 26400087
  from the Japan Society for the Promotion of Science,
  the third author by (A) No.\ 26247005 and
  the fourth author by (C) No.\ 26400087 from the Japan
  Society for the
  Promotion of Science.
}
\newtheorem{introtheorem}{Theorem}
\newtheorem{introcorollary}[introtheorem]{Corollary}
\newtheorem{thm}{Theorem}[section]
\newtheorem{prop}[thm]{Proposition}
\newtheorem{cor}[thm]{Corollary}
\newtheorem{lem}[thm]{Lemma}
\newtheorem{fact}[thm]{Fact}
\newtheorem*{prob*}{Problem}
\theoremstyle{definition}
\newtheorem{defi}[thm]{Definition}
\newtheorem{rem}[thm]{Remark}
\newtheorem{exa}[thm]{Example}
\newtheorem*{ack}{Acknowledgements}
\newenvironment{tProof}[1]{\begin{proof}[{\rm(}#1{\rm)}]}{\end{proof}}
\newenvironment{Proof}{\begin{proof}}{\end{proof}}
\spnewtheorem{introcorollary}[introtheorem]{Corollary}{\bfseries}{\itshape}
\spnewtheorem{thm}{Theorem}[section]{\bfseries}{\itshape}
\spnewtheorem{prop}[thm]{Proposition}{\bfseries}{\itshape}
\spnewtheorem{cor}[thm]{Corollary}{\bfseries}{\itshape}
\spnewtheorem{lem}[thm]{Lemma}{\bfseries}{\itshape}
\spnewtheorem{fact}[thm]{Fact}{\bfseries}{\itshape}
\spnewtheorem{defi}[thm]{Definition}{\bfseries}{\rmfamily}
\spnewtheorem{rem}[thm]{Remark}{\itshape}{\rmfamily}
\spnewtheorem{exa}[thm]{Example}{\itshape}{\rmfamily}
\spnewtheorem*{ack}{Acknowledgements}{\itshape}{\rmfamily}
\newenvironment{tProof}[1]{%
\begingroup%
\begin{proof}\smartqed}{\qed\end{proof}\endgroup}
\newenvironment{Proof}{\begin{proof}\smartqed}{\qed\end{proof}}
\numberwithin{equation}{section}
\newcommand{\op}[1]{{\operatorname{#1}}}
\newcommand{\vect}[1]{{\boldsymbol{#1}}}
\newcommand{\dy}{\displaystyle}
\newcommand{\ccr}{\op{ccr}}
\newcommand{\ext}{\op{ext}}
\newcommand{\sgn}{\op{sgn}}
\newcommand{\R}{\boldsymbol{R}}
\newcommand{\U}{{\mathcal U}}
\renewcommand{\O}{{\mathcal O}}
\newcommand{\M}{{\mathcal M}}
\newcommand{\inner}[2]{\left\langle{#1},{#2}\right\rangle}
\renewcommand{\phi}{\varphi}
\renewcommand{\epsilon}{\varepsilon}
\renewcommand{\det}{\op{det}{}}
\newcommand{\pmt}[1]{{\begin{pmatrix} #1  \end{pmatrix}}}
\begin{document}
\maketitle
\ifx\PROM\undefined
\begin{abstract}
 We define cuspidal curvature $\kappa_c$ 
 (resp.\ normalized cuspidal curvature $\mu_c$)
 along cuspidal edges 
 (resp.\  at a swallowtail singularity)
 in Riemannian $3$-manifolds,
 and show that it gives a coefficient
 of the divergent term
 of the mean curvature function. 
 Moreover, we show that the product 
 $\kappa_\Pi^{}$ called the product curvature 
 (resp.\ $\mu_\Pi^{}$ called normalized product curvature)
 of  $\kappa_c$ (resp.\ $\mu_c$)
 and the limiting normal curvature $\kappa_\nu$
 is an intrinsic invariant of the surface, and is
 closely related to
 the boundedness of the Gaussian curvature.
 We also consider the limiting behavior of
 $\kappa_\Pi^{}$ when cuspidal edges
 accumulate to other singularities.
 Moreover, several new
 geometric invariants of cuspidal edges 
 and swallowtails  are given.
\end{abstract}
\else
\abstract{%
 We define cuspidal curvature $\kappa_c$ 
 (resp.\ normalized cuspidal curvature $\mu_c$)
 along cuspidal edges 
 (resp.\  at a swallowtail singularity)
 in Riemannian $3$-manifolds,
 and show that it gives a coefficient
 of the divergent term
 of the mean curvature function. 
 Moreover, we show that the product 
 $\kappa_\Pi^{}$ called the product curvature 
 (resp.\ $\mu_\Pi^{}$ called normalized product curvature)
 of  $\kappa_c$ (resp.\ $\mu_c$)
 and the limiting normal curvature $\kappa_\nu$
 is an intrinsic invariant of the surface, and is
 closely related to
 the boundedness of the Gaussian curvature.
 We also consider the limiting behavior of
 $\kappa_\Pi^{}$ when cuspidal edges
 accumulate to other singularities.
 Moreover, several new
 geometric invariants of cuspidal edges 
 and swallowtails  are given.
}
\keywords{
  {singularities},
  {wave front},
  {cuspidal edge},
  {swallowtail},
  {cuspidal cross cap},
  {Gaussian curvature},
  {mean curvature}
}
\begingroup
\renewcommand{\thefootnote}{\relax}%
\footnotetext{\thanks}
\endgroup
\fi

\section*{Introduction}
In \cite{SUY}, the behavior of the 
Gaussian curvature $K$ along cuspidal
edge singularities in Riemannian $3$-manifolds 
$(M^3,g)$ was discussed.
However, the existence of \lq intrinsic\rq\ 
invariants related to the
boundedness of $K$ was not mentioned there. 
In this paper, we show that
several given invariants 
of 
singularities of surfaces in $M^3$
are actually intrinsic%
\footnote{
  These invariants 
  can be treated as invariants
  of a certain class of
  positive semi-definite metrics,
  see \cite{HHNSUY}, \cite{NUY} and 
  \cite{SUY4}.},  
by
\begingroup
\renewcommand{\theenumi}{{\rm{(\roman{enumi})}}}
\renewcommand{\labelenumi}{{\rm{(\roman{enumi})}}}
\begin{enumerate}
 \item setting up a class of local coordinate systems determined by the
       induced
       metrics (i.e.\ the first fundamental forms),
 \item and giving formulas for the invariants in terms 
       of the coefficients of the first
       fundamental forms with respect to the above coordinate systems.
\end{enumerate}
\endgroup%
Recently, geometric invariants of
cross cap singularities on surfaces are discussed by
several geometers
(\cite{BW, dt,fh,fh2,ggs,HHNUY,Oset-Tari,faridtari,west}). 
Also,
Nu\~no-Ballesteros and the first 
author \cite{MB} investigated
geometric properties of rank one singularities other than cross
caps. After that, in a joint work \cite{MS} of the first two
authors, a normal form for cuspidal edges was given and
geometric meanings of its coefficients were discussed
as invariants of cuspidal edges. 
For example, 
the singular curvature $\kappa_s$
and
the limiting normal curvature $\kappa_\nu$
for cuspidal edge singularities 
are defined in  \cite{SUY},
each of which appears as one of 
these coefficients of the normal form (cf.\ \cite{MS}).

In this paper, 
we generalize the concept of limiting normal curvature
$\kappa_\nu$
for an arbitrarily given rank one
singular point on each wave front.
(The definition of wave fronts (or fronts for short)
is given in Section~\ref{sec:prelim}.)
A non-degenerate singular point $p$ 
(cf.\ Definition \ref{def:lambda})
on a front is a 
rank one singular point such that
the component of singular set
containing $p$ 
consists of a regular curve
in the source space, which is called
the \lq singular curve\rq\ 
(see Section~\ref{sec:prelim}).
In the Euclidean 3-space $\R^3$, the Gauss maps
of fronts are defined and
smoothly extended across the singular curve.
One of our main results is as follows:
\begin{introtheorem}\label{thm:A}
 Let $\U$ be
 a domain in $\R^2$, and
 $f:\U\to (M^3,g)$  a front
 which admits only
 non-degenerate singular points.  
 Then the $2$-form $K\,d\hat A$ 
 can be smoothly extended to
 $\U$,
 where $d\hat A=\det_g(f_u,f_v,\nu)\,d u\wedge dv$ 
 is the signed area element
 {\rm(}cf.\ Remark \ref{rem:KdA}{\rm).
 Moreover, for each singular point $p$ on $\U$},
 the following two conditions are equivalent{\rm :}
 \begingroup
 \renewcommand{\theenumi}{{\rm(\arabic{enumi})}}
 \renewcommand{\labelenumi}{{\rm(\arabic{enumi})}}
 \begin{enumerate} 
  \item\label{item:A1} 
       the limiting normal curvature $\kappa_\nu$
       at $p$ is equal to zero,
  \item\label{item:A3} 
       the extension of the $2$-form $K\,d\hat A$ 
       vanishes at $p$.
 \end{enumerate}
 \endgroup
If $\kappa_\nu(p) \ne 0$, the Gaussian curvature $K$
is unbounded near $p$ and changes sign between
two sides of the singular curve.
 Furthermore, if $(M^3,g)$ is the Euclidean $3$-space,
 the above two conditions are equivalent to
 that the Gauss map $\nu:\U\to S^2$ of $f$ has a 
 singularity at $p$.
\end{introtheorem}
In Section~\ref{sec:cuspidal},
we newly introduce the {\it cuspidal curvature\/}
$\kappa_c$ along cuspidal edges.
We show that $\kappa_c$
coincides with the cuspidal curvature of
the cusp of the plane curve
obtained as the section of the
surface by the plane $P$,
where $P$ is the plane
orthogonal to the 
tangential direction 
at a given cuspidal edge
(cf.\ Proposition \ref{prop:geom0}).
The cuspidal curvature $\kappa_c$
appears in the first coefficient of
the divergent term  of the mean curvature function (cf.\ \eqref{eq:hatH}).
Then, we consider the product
\[
    \kappa_\Pi^{}:=\kappa_\nu \kappa_c
\]
called {\it product curvature\/}
along cuspidal edges, and show that 
it is an \lq intrinsic invariant\rq. 
Using  $\kappa_\Pi^{}$, we give a necessary and sufficient
condition for the boundedness of the Gaussian curvature function around
cuspidal edges (cf.\ Corollary~\ref{cor:cusp} and Theorem~\ref{thm:main1}). 
Similarly, in Section~\ref{sec:general}, we also define
the {\it normalized cuspidal curvature\/} $\mu_c$
as the first coefficient of
the divergent term  of the mean curvature function
at swallowtail singularities, and
consider the product
\[
    \mu_\Pi^{}:=\kappa_\nu \mu_c
\]
called {\it normalized product curvature},
which is an \lq intrinsic invariant\rq\ 
of swallowtail singularities and 
related to the boundedness of the Gaussian curvature 
function (cf.\ Proposition \ref{prop:gaussbdd}). 
We then discuss the limiting behavior of
$\kappa_c$ and $\kappa_\nu$
when cuspidal edges accumulate to other singularities,
in Section \ref{sec:general}.  As a
consequence, we get the following property of the limiting normal
curvature:
\begin{introtheorem}\label{thm:B}
 Let $f:\U\to (M^3,g)$ be a front, and $p\in \U$
 a non-degenerate singular point, where $\U$ is
 a domain in $\R^2$ and $(M^3,g)$ is 
 a Riemannian $3$-manifold.
 Then the Gaussian curvature of $f$
 is rationally bounded at $p$ if and only if
 the limiting normal curvature $\kappa_\nu(p)$
 is equal to zero.
\end{introtheorem}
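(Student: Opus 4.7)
The overall plan is to reduce the statement to the case of cuspidal edges, where the equivalence will have been established in Section~\ref{sec:cuspidal} through the product curvature $\kappa_\Pi=\kappa_\nu\kappa_c$, and then to upgrade this to arbitrary non-degenerate singular points by a density and continuity argument, using Theorem~\ref{thm:A} to handle the exceptional isolated singularities. The key structural fact is that at a non-degenerate singular point, the singular set $S(f)$ is locally a regular curve through $p$, and along this curve cuspidal edges form an open dense subset.

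First I would treat the cuspidal edge case. Along $S(f)=\{v=0\}$ the Gaussian curvature admits a representation $K=\mathcal{N}/\mathcal{D}$ with $\mathcal{N},\mathcal{D}$ smooth and $\mathcal{D}$ vanishing to order two on $S(f)$, while the lowest-order behaviour of $\mathcal{N}$ along $S(f)$ is controlled precisely by $\kappa_\Pi$, hence by $\kappa_\nu$. This yields the equivalence at every cuspidal edge: $K$ is rationally bounded at $p$ if and only if $\kappa_\nu(p)=0$.

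Now let $p$ be a general non-degenerate singular point. Suppose first $\kappa_\nu(p)=0$. Theorem~\ref{thm:A} asserts that $K\,d\hat A$ extends smoothly across $p$ and vanishes there, while $d\hat A$ also vanishes at $p$; thus $K$ becomes the ratio of two smooth functions each vanishing on $S(f)$, and a careful comparison of their orders of vanishing (using the normal form of the front at $p$) yields the required rational bound. Conversely, if $K$ is rationally bounded at $p$, then in particular it is bounded along cuspidal edges accumulating to $p$; by the cuspidal edge case, $\kappa_\nu$ vanishes on a dense subset of $S(f)$ near $p$, and continuity of $\kappa_\nu$ along the singular curve (which is intrinsic by Section~\ref{sec:cuspidal}) forces $\kappa_\nu(p)=0$.

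The main obstacle is the forward direction at the non-cuspidal-edge singularities (swallowtails, cuspidal cross caps, and any other non-degenerate type): here the denominator $\mathcal{D}$ may vanish to higher order at $p$ than along the surrounding cuspidal edge curve, and one must verify that the hypothesis $\kappa_\nu(p)=0$ forces $\mathcal{N}$ to vanish to the matching order along \emph{every} direction of approach to $p$, not merely along $S(f)$. The hope is that Theorem~\ref{thm:A}, by upgrading the vanishing of $\kappa_\nu$ to the vanishing of the smooth $2$-form $K\,d\hat A$ at $p$, supplies exactly the two-variable control needed to obtain rational boundedness off the singular curve.
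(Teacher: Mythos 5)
There is a genuine gap, and it affects both halves of your reduction. First, your ``key structural fact'' that cuspidal edges form an open dense subset of the singular curve near $p$ is false: the cone $f(u,v)=(v\cos u,v\sin u,v^2+v)$ of Section~\ref{sec:prelim} is a front all of whose singular points are non-degenerate and of the second kind (the null direction $\partial_u$ is everywhere tangent to the singular curve $\{v=0\}$), so there are no cuspidal edges at all for your density argument to use; there $\kappa_\nu\equiv 1/\sqrt{2}$ and the theorem must be checked directly at second-kind points. Second, and independently, the converse step ``$K$ rationally bounded at $p$ $\Rightarrow$ $K$ bounded along cuspidal edges accumulating to $p$'' is invalid: rational boundedness at $p$ is a condition on the blow-up at $p$ alone (Definition~\ref{def:rational}), and the exceptional directions $\theta_j$ are precisely the tangent directions of the singular curve (in the paper's proofs one takes $\lambda=\sin\theta$, which vanishes exactly there), so no estimate on $K$ near the nearby singular points follows. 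Indeed, a cuspidal edge with $\kappa_\nu(p)=0$ but $\kappa_\nu\ne 0$ arbitrarily close to $p$ along the singular curve has $K$ rationally bounded at $p$ yet unbounded on every neighborhood of $p$; had your inference been valid it would yield $\kappa_\nu\equiv 0$ near $p$, which is the strictly stronger conclusion the paper reserves for genuine boundedness of $K$ (the last assertions of Theorems~\ref{thm:main1} and~\ref{thm:gaussbdd}).

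The paper instead proceeds by an exhaustive two-case analysis at $p$ itself, with no passage to nearby singularities: at a first-kind front singularity, $2\hat K(u,0)=\kappa_\nu(u)\kappa_c(u)$ in adapted coordinates with $\kappa_c(p)\ne 0$ (Theorem~\ref{thm:main1} together with Lemma~\ref{lem:kappac-psiccr}, giving Corollary~\ref{cor:cusp}); at a second-kind front singularity, $\hat K(u,0)=2\hat H(u,0)\kappa_\nu(u)$ with $\hat H(p)\ne 0$ (Theorem~\ref{thm:gaussbdd}), where $\hat K=vK$ is smooth. Rational boundedness forces $\hat K(0,0)=0$, hence $\kappa_\nu(p)=0$, and conversely. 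For the forward direction your instinct to invoke Theorem~\ref{thm:A} can in fact be completed cleanly: since $K\lambda$ is smooth (with $\lambda=\det_g(f_u,f_v,\nu)$ as in Definition~\ref{def:lambda}), vanishes at $p$ exactly when $\kappa_\nu(p)=0$, and $d\lambda_p\ne 0$ by non-degeneracy, one may take $(\lambda\circ\pi)/r$ and $((K\lambda)\circ\pi)/r$ as the pair required by Definition~\ref{def:rational}. But as written you only express the ``hope'' that Theorem~\ref{thm:A} supplies the needed control, and the converse direction needs the pointwise identities above rather than density and continuity.
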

The definition of rational boundedness is given in Definition
\ref{def:rational}.

This assertion is a consequence of
Corollary \ref{cor:cusp} and Theorem \ref{thm:gaussbdd}.
In Theorem \ref{thm:B}, the assumption that
$f$ is a front is crucial (see Example \ref{ex:crcp}).
The above two theorems yield the
following assertion, which summarizes 
the geometric properties of the
limiting normal curvature:
\begin{introcorollary}\label{cor:C}
 Let $f:\U\to \R^3$ be a front, and $p\in \U$
 a non-degenerate singular point, where $\U$ is
 a domain in $\R^2$. Then the following
 three properties are equivalent{\rm :}
 \begingroup
 \renewcommand{\theenumi}{{\rm(\arabic{enumi})}}
 \renewcommand{\labelenumi}{{\rm(\arabic{enumi})}}
 \begin{enumerate}
  \item\label{item:C1} 
        the Gaussian curvature of $f$
	is rationally bounded at $p$,
  \item\label{item:C2} 
        the limiting normal curvature at $p$
	is equal to zero,
  \item\label{item:C3} 
        a singular point $p$ of $f$
	is also a singular point of
	the Gauss map of $f$.
 \end{enumerate}
 \endgroup
\end{introcorollary}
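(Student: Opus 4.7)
The plan is to obtain Corollary \ref{cor:C} as a direct synthesis of Theorems \ref{thm:A} and \ref{thm:B}, both of which are proved in the body of the paper. Since the ambient manifold here is taken to be $\R^3$, we have access to the final clause of Theorem \ref{thm:A} (concerning the Gauss map) in addition to its main equivalence.

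First I would verify that the hypotheses of both theorems are satisfied at $p$. Theorem \ref{thm:B} requires only that $p$ be non-degenerate, which is part of the assumption. Theorem \ref{thm:A} is stated for rank-one singular points, so I would invoke the standard fact that a non-degenerate singular point of a front is automatically of rank one, so that Theorem \ref{thm:A} also applies at $p$.

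With both theorems available, the deduction proceeds in two steps. The equivalence \eqref{item:C1} $\Leftrightarrow$ \eqref{item:C2} is exactly Theorem \ref{thm:B} specialized to the Euclidean case. The equivalence \eqref{item:C2} $\Leftrightarrow$ \eqref{item:C3} is the final clause of Theorem \ref{thm:A}, which states that in $\R^3$ the limiting normal curvature $\kappa_\nu$ vanishes at $p$ if and only if the Gauss map $\nu \colon \U \to S^2$ has a singularity at $p$. Composing these two equivalences yields the full chain \eqref{item:C1} $\Leftrightarrow$ \eqref{item:C2} $\Leftrightarrow$ \eqref{item:C3}.

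The hard part is not in the corollary itself but in the two theorems on which it rests: Theorem \ref{thm:B} demands a careful local analysis of the rational boundedness of $K$ around a non-degenerate singular point, while Theorem \ref{thm:A} requires identifying $K\,d\hat A$ as a smooth $2$-form across the singularity and relating its vanishing both to $\kappa_\nu$ and, in $\R^3$, to the singular set of the Gauss map. Once those technical results are in hand, Corollary \ref{cor:C} follows formally with no additional computation.
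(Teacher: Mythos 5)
Your proposal matches the paper's own derivation: the authors obtain Corollary~\ref{cor:C} exactly by combining Theorem~\ref{thm:B} for the equivalence of \eqref{item:C1} and \eqref{item:C2} with the final clause of Theorem~\ref{thm:A} for the equivalence of \eqref{item:C2} and \eqref{item:C3}, using the fact (stated in Section~\ref{sec:prelim}) that a non-degenerate singular point has rank one. Your reasoning is correct and essentially identical to the paper's.
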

In \cite[Lemma 3.25]{SUY3}, the 
second, third, forth authors showed that
the singular set of the Gauss map coincides with
the singular set of $f$ if $\log |K|$ is bounded.
The equivalency of \ref{item:C1} and \ref{item:C3} is 
a refinement of it.

At the end of this paper, we introduce 
a new invariant of swallowtails called
{\it limiting singular curvature\/} $\tau_s$,  
which is related to the cuspidal curvature 
of the orthogonal 
projection of the swallowtail singularities 
(cf.\ Corollary~\ref{cor:projection}).

\section{Limiting normal curvature}\label{sec:prelim}
Let $\Sigma^2$ be an oriented $2$-manifold and 
$f:\Sigma^2\to (M^3,g)$ a $C^\infty$-map into an oriented Riemannian 
$3$-manifold $(M^3,g)$. 
A {\it singular point\/} of $f$ is a point at which $f$ is not an
immersion. 
The map $f$ is called a {\it frontal\/} if for each $p\in \Sigma^2$,
there exist a neighborhood $\U$ of $p$ and a unit vector field $\nu$
along $f$ defined on $\U$ such that $\nu$ is perpendicular to
$df(\vect a)$ for all tangent vectors $\vect a\in T\U$. Moreover, if 
$\nu:\U\to
TM^3$ gives an immersion, $f$ is called a (wave) {\it front}. 
We fix a frontal $f:\Sigma^2\to M^3$.
\begin{defi}\label{def:lambda}
 A singular point $p\in \Sigma^2$ of a frontal $f$
 is called {\it non-degenerate\/} if
 the exterior derivative of the function
 \[
   \lambda:=\det_g(f_u,f_v,\nu)\quad
    \bigl(
     f_u := df(\partial_u),~
     f_v := df(\partial_v)
    \bigr)
 \]
 does not vanish at $p$, where $(u,v)$
 is a local coordinate system of $\Sigma^2$ at $p$,
 $\partial_u=\partial/\partial u$,
 $\partial_v=\partial/\partial v$,
 and $\det_g$ is the Riemannian volume form
 of $(M^3,g)$.
 Here, the function $\lambda$ is called
 the {\it signed area density function\/} with respect to
 the local coordinate system $(u,v)$ of $\Sigma^2$.
 If the ambient space $(M^3,g)$ is the
 Euclidean $3$-space $\R^3$, then 
 \lq $\det_g$\rq\ can be
 identified with the usual determinant
 function \lq$\det$\rq\ for
 $3\times 3$-matrices.
\end{defi}
\begin{rem}\label{rem:KdA}
 We set
 \begin{equation}\label{eq:KdA}
  (d\hat A=)d \hat A_f:=\lambda\, du\wedge dv=\det_g(f_u,f_v,\nu)\,du\wedge dv,
 \end{equation}
 which is called the {\it signed area element\/} 
 of $f$ defined in
 \cite{SUY} and \cite{SUY3}.
 If the ambient space is the
 Euclidean $3$-space $\R^3$,
 the Gauss map $\nu$ takes values in the unit sphere
 $S^2$. By the Weingarten formula,
 we have
 \[
    (\nu_u,\nu_v)=-(f_u,f_v) W,
    \qquad
       W:=
          \pmt{g_{11} & g_{12} \\ 
               g_{12} & g_{22}}^{-1}
          \pmt{h_{11} & h_{12} \\
               h_{12} & h_{22}},
 \]
 where 
 \[
    ds^2=g_{11}\,du^2+2g_{12}\,du\,dv+g_{22}\,dv^2,\quad
    h=h_{11}\,du^2+2h_{12}\,du\,dv+h_{22}\,dv^2
 \]
 are the first and the second fundamental forms
 of $f$.
 Since $\nu$ itself can be considered as 
 the unit normal vector of the Gauss map $\nu$,
 we have
 \begin{align*}
  d \hat A_\nu
  &=\det(\nu_u,\nu_v,\nu)
  =\det(-a^1_1f_u-a^2_1f_v,-a^1_2f_u-a^2_2f_v,\nu)
  \\
  &=\det(W)\det(f_u,f_v,\nu)=K\, d\hat A_f,
 \end{align*}
 where $W=(a^i_{j})_{i,j=1,2}$ and
 $K$ is the Gaussian curvature of $f$.
 (We used the Gauss equation $K=\det(W)$.)
 This fact implies that $K\,d \hat A_f$ coincides with 
 the pull-back of the area element of $S^2$.
 In particular, $K\,d \hat A_f$ can be smoothly
 extended across the singular set.
\end{rem}

A non-degenerate singular point $p$ of $f$ is a rank one
singular point, i.e., the kernel of $df(p)$ is  one dimensional.
Since $\{\lambda=0\}$ is the singular set, by the implicit function
theorem, we can take a regular curve $\gamma(t)$ ($|t|<\varepsilon$) 
on $\Sigma^2$ as a parametrization of the singular set
such that $\gamma(0)=p$, where $\varepsilon>0$.
(We call $\gamma$ the {\it singular curve}.) 
There exists a non-vanishing vector field
$\eta(t)$ along $\gamma$ such that $df(\eta(t))$ vanishes
identically. We call $\eta(t)$ a {\it null vector field\/} along
$\gamma$.

A non-degenerate singular point $p$ is
said to be of the {\it first kind\/} if $\eta(0)$ is not
proportional to $\gamma'(0):=d\gamma/dt|_{t=0}$.
Otherwise, it is said to be
of the {\it second kind\/}.
\begin{defi}
 A singular point $p\in \Sigma^2$ of a map $f:\Sigma^2\to M^3$
 is a {\em cuspidal edge}\/
 if the map germ $f$ at $p$ is right-left equivalent to
 $(u,v)\mapsto(u,v^2,v^3)$ at the origin.
 A singular point $p$ of $f$
 is a {\em swallowtail\/} (respectively, {\em cuspidal cross cap})
 if $f$ at $p$ is right-left equivalent to
 $(u,v)\mapsto(u,4v^3+2uv,3v^4+uv^2)$
 (respectively, $(u,v)\mapsto(u,v^2,uv^3)$) at the origin.
 Here, $f$ is considered as a map germ $f\colon{}(\R^2,0)\to(\R^3,0)$
 by taking local coordinate systems of $\Sigma^2$ and $M^3$
 at $p$ and $f(p)$, respectively,
 and
 two map germs $f_1$ and $f_2$ are
 {\em right-left equivalent\/} if there exist diffeomorphism
 germs
 $\phi:(\R^2,0)\to (\R^2,0)$ and
 $\Phi:(\R^3,0)\to (\R^3,0)$
 such that
 $\Phi\circ f_1=f_2\circ \phi$ holds.
\end{defi}
Figures of these singularities are shown in Fig.\ \ref{fig:ceswccr}.
There are criteria for these singularities.

\begin{figure}[htbp]
\centering
 \begin{tabular}{c@{\hspace{3em}}c@{\hspace{3em}}c}
  \includegraphics[width=.25\linewidth]{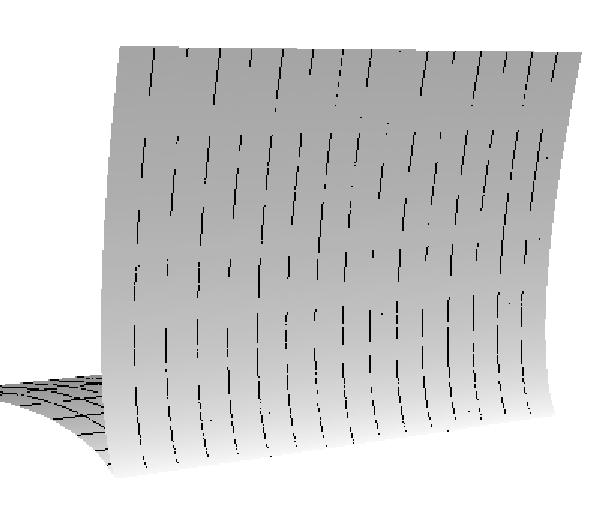}&
  \includegraphics[width=.25\linewidth]{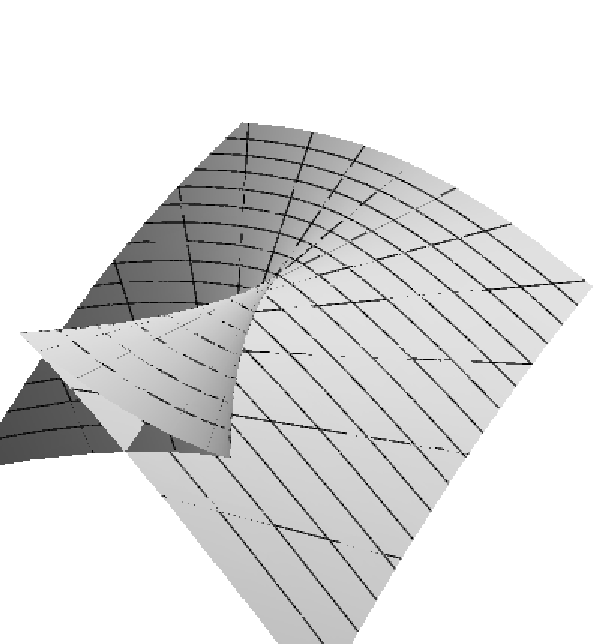}&
  \includegraphics[width=.25\linewidth]{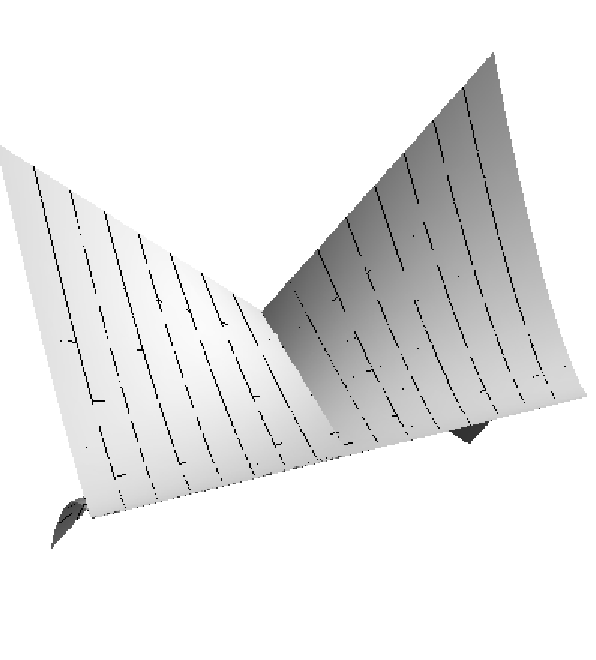}
 \end{tabular}
 \caption{A cuspidal edge,
          a swallowtail and
          a cuspidal cross cap.}
 \label{fig:ceswccr}
\end{figure}
\begin{fact}%
 [{\cite[Proposition 1.3]{KRSUY}, \cite[Corollary 1.5]{FSUY}
   see also \cite[Corollary 2.5]{SUYcamb}}]
\label{fact:criteria}
 Let
 $f\colon{}\Sigma^2\to (M^3,g)$ be a frontal and $p$
 a non-degenerate singular point.
 Take the singular curve $\gamma(t)$ such that $\gamma(0)=p$
 and a null vector field $\eta(t)$ along $\gamma$.
 Then
 \begingroup
 \renewcommand{\theenumi}{{\rm (\arabic{enumi})}}
 \renewcommand{\labelenumi}{{\rm(\arabic{enumi})}}
 \begin{enumerate}
 \item\label{item:Fact1}  
        $f$ at $p$ is a cuspidal edge
	if and only if $f$ is a front 
        and $\{\gamma',\eta\}$ is linearly independent
	at $p$, that is, $p$ is of the first kind,
  \item\label{item:Fact2}  
        $f$ at $p$ is a swallowtail
	if and only if $f$ is a front and
        $\{\gamma',\eta\}$ is linearly dependent 
        at $p$ {\rm(}i.e.,\ $p$ is of the second kind{\rm)},
	but $(d/dt)\det(\gamma'(t),\eta(t))|_{t=0}\ne0$
	holds,
	where $\det$ denotes an area element of $\Sigma^2$, and
  \item\label{item:Fact3}  
        $f$ at $p$ is a cuspidal cross cap
	if and only if
       $\{\gamma',\eta\}$ is     
        linearly independent at $p$
        {\rm(}i.e., $p$ is of the first kind{\rm)},
	$\psi_{\ccr}(0)=0$ and $\psi'_{\ccr}(0)\ne0$,
	where
	\[
	\psi_{\ccr}(t):=\det_g\biggl(
	\hat\gamma'(t),\ \nu\bigl(\gamma(t)\bigr),\
	(\nabla_{\eta}\nu)\bigl(\gamma(t)\bigr)
	\biggr)
	\quad
	\biggl(\hat\gamma(t):=f\bigl(\gamma(t)\bigr)\biggr),
	\]
	$\nu$ is the unit normal vector field,
	and
	$\nabla$ is the Levi-Civita connection of $(M^3,g)$.
 \end{enumerate}
 \endgroup
\end{fact}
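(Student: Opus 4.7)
My plan is to reduce each criterion to the corresponding canonical normal form by choosing coordinates adapted to the singular curve and the null vector field, which is the standard approach used in the cited works. Because $p$ is non-degenerate, the singular set $\{\lambda=0\}$ is locally a regular curve $\gamma$, and one may choose coordinates $(u,v)$ centered at $p$ so that $\gamma(t)=(t,0)$; the null vector $\eta(t)$ is then well defined (up to scale) along the $u$-axis.

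For part \ref{item:Fact1}, when $\{\gamma'(0),\eta(0)\}$ is linearly independent (first kind), I would extend $\eta$ to a vector field $\partial_v$ by flowing along the null direction, so that $f_v(u,0)\equiv 0$. Taylor expansion in $v$ gives $f(u,v)=f(u,0)+\tfrac{v^2}{2}f_{vv}(u,0)+\tfrac{v^3}{6}f_{vvv}(u,0)+O(v^4)$. The front hypothesis implies that $f_{vv}(u,0)$ is transverse to $f_u(u,0)$ modulo $\nu(u,0)$, so a target diffeomorphism sending these vectors to the standard basis, followed by a source change absorbing the $O(v^4)$ remainder, reduces $f$ to $(u,v^2,v^3)$.

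For part \ref{item:Fact2}, $\eta(0)\parallel\gamma'(0)$, so I would instead extend $\eta$ to a non-vanishing field on a full neighborhood and pick coordinates in which $\partial_v=\eta$; then the singular curve is tangent to $\partial_v$ at the origin. The transversality condition $\frac{d}{dt}\det(\gamma'(t),\eta(t))\big|_{t=0}\neq 0$ translates, after rescaling, into $\lambda_v(0)=0$ and $\lambda_{uu}(0)\neq 0$, so $\lambda$ has a Morse-like critical behavior along $\gamma$. Applying the Malgrange preparation theorem to $f_v$ modulo $f_u$ together with the front condition produces, after an explicit polynomial change of variables, the swallowtail form $(u,4v^3+2uv,3v^4+uv^2)$. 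Part \ref{item:Fact3} is handled as in part \ref{item:Fact1}, but dropping the front assumption: the quantity $\psi_{\ccr}$ measures the deviation of $\nabla_\eta\nu$ from lying in the plane spanned by $f_u$ and $f_{vv}$, and the conditions $\psi_{\ccr}(0)=0$, $\psi_{\ccr}'(0)\neq 0$ force this component to vanish simply along $\gamma$ at $p$. This promotes the leading order of the third component from $v^3$ to $uv^3$, yielding $(u,v^2,uv^3)$.

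The principal obstacle is part \ref{item:Fact2}: coordinating the source and target diffeomorphisms so that higher-order terms are systematically eliminated is delicate, and requires the Malgrange preparation theorem in an essential way. The other two parts amount to Taylor-expansion arguments once the adapted coordinate system is in place, so rather than reproduce the full computations I would follow the detailed arguments of \cite{KRSUY}, \cite{FSUY}, and \cite{SUYcamb}.
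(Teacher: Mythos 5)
The paper does not prove Fact \ref{fact:criteria}: it is quoted as a known result from \cite{KRSUY} and \cite{FSUY}, so your decision to fall back on those references for the hard steps is consistent with how the paper itself treats the statement. Taken as a standalone proof, however, your sketch has concrete gaps beyond the incompleteness you already concede in part (2).

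In part (1), the reduction you describe --- send $f_u(p)$ and $f_{vv}(p)$ to a standard basis and absorb the $O(v^4)$ remainder --- does not invoke the front hypothesis where it is actually needed, and would equally ``prove'' that the cuspidal cross cap $(u,v^2,uv^3)$ and the $5/2$-cuspidal edge $(u,v^2,v^5)$ are cuspidal edges. The linear independence of $f_u$, $f_{vv}$, $\nu$ along the singular curve is already a consequence of non-degeneracy (cf.\ \eqref{eq:non-degfvv}); what the front condition supplies is the third-order information $\det_g(f_u,f_{vv},f_{vvv})\ne0$ (this is the content of Lemmas \ref{lem:psiccr} and \ref{lem:kappac-psiccr}), and it is precisely this coefficient of $v^3$ in the $\nu$-direction that must be shown nonzero and then normalized to reach $(u,v^2,v^3)$. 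Relatedly, your geometric description of $\psi_{\ccr}$ in part (3) is incorrect: along the singular curve $\inner{f_{vv}}{\nu}=0$, so $f_u$ and $f_{vv}$ span $\nu^\perp$ and $\nabla_\eta\nu$ \emph{always} lies in that plane; what $\psi_{\ccr}$ detects is the component of $\nabla_\eta\nu$ along $f_{vv}$, equivalently the component of $f_{vvv}$ along $\nu$. In part (2), the translation of $(d/dt)\det(\gamma',\eta)|_{t=0}\ne0$ into ``$\lambda_v(0)=0$ and $\lambda_{uu}(0)\ne0$'' is not justified, and the Malgrange-preparation step is asserted rather than carried out. Finally, all three items are equivalences and you only address sufficiency; necessity requires verifying that the stated conditions hold for the model germs and are invariant under right-left equivalence. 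None of this is fatal to the strategy, but filling the gaps amounts to reproducing the arguments of \cite{KRSUY} and \cite{FSUY}, which is what the paper's citation already accomplishes.
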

Regarding $\psi_{\ccr}$,
the following lemma holds.
\begin{lem}[{\cite[Corollary 1.7]{FSUY}}]
\label{lem:psiccr}
 Let $f: \Sigma^2\to (M^3,g)$ be
 a frontal  and $p$ a singular point of the first kind.
 Then $f$ is a front at $p$ if and only if
 $\psi_{\ccr}\ne0$ at $p$.
\end{lem}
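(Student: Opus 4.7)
The plan is to choose adapted coordinates that recast both conditions as statements about the single vector $\nabla_\eta\nu|_p$. Since $p$ is of the first kind, $\gamma'(0)$ and $\eta(0)$ are linearly independent, so one can take local coordinates $(u,v)$ centered at $p$ with the singular curve being $\{v=0\}$ and $\partial_v=\eta$ along it; then $f_v\equiv 0$ on the singular curve and $f_u(p)=\hat\gamma'(0)\neq 0$. The front condition is equivalent to requiring that $\nu\colon\U\to TM^3$ be an immersion at $p$. Decomposing the differential of $\nu$ into its horizontal part (given by $df$) and its vertical part (given by the pullback covariant derivative $\nabla\nu$), the two columns at $p$ are $(f_u,\nabla_{\partial_u}\nu)$ and $(0,\nabla_\eta\nu)$. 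Since $f_u(p)\neq 0$, these are linearly independent if and only if $\nabla_\eta\nu|_p\neq 0$.

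Since $\hat\gamma'(0)=f_u(p)$, the quantity $\psi_{\ccr}(0)$ takes the form
\[
  \psi_{\ccr}(0)=\det_g\bigl(f_u(p),\,\nu(p),\,\nabla_\eta\nu|_p\bigr).
\]
I claim that $\nabla_\eta\nu|_p$ is orthogonal to both $\nu(p)$ and $f_u(p)$. Orthogonality to $\nu$ is immediate from $|\nu|\equiv 1$. For orthogonality to $f_u$, differentiate $\langle\nu,f_u\rangle\equiv 0$ in the $v$-direction to obtain
\[
  \langle\nabla_\eta\nu,\,f_u\rangle\big|_p
   =-\langle\nu,\,\nabla_{\partial_v}f_u\rangle\big|_p,
\]
and apply the torsion-free symmetry $\nabla_{\partial_v}f_u=\nabla_{\partial_u}f_v$ of the pullback connection together with the fact that $f_v\equiv 0$ along the singular curve, so $\nabla_{\partial_u}f_v|_p=0$.

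Finally, since $f_u(p)$ and $\nu(p)$ are nonzero and mutually orthogonal, their common orthogonal complement in $T_{f(p)}M^3$ is one-dimensional. Hence $\nabla_\eta\nu|_p$ lies on this line, and is nonzero precisely when $\{f_u,\nu,\nabla_\eta\nu\}|_p$ is a basis of $T_{f(p)}M^3$, i.e.\ precisely when $\psi_{\ccr}(0)\neq 0$. Combined with the first paragraph this gives the stated equivalence. The only mildly technical point is the use of the symmetry of mixed covariant derivatives at a singular point; this is standard since the pullback connection on vector fields along a smooth map is defined irrespective of the rank of the map.
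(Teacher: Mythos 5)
Your proof is correct and follows essentially the same route as the paper: both reduce the front condition to $\nabla_\eta\nu|_p\ne 0$, and both establish the key fact that $\nabla_\eta\nu|_p$ is orthogonal to $\hat\gamma'(0)=f_u(p)$ and to $\nu(p)$ via the symmetry of the pullback connection and the vanishing of $f_v$ along the singular curve. The only difference is organizational (you argue directly that $\nabla_\eta\nu|_p$ lies on the line orthogonal to $\operatorname{span}\{f_u,\nu\}$, whereas the paper assumes $\psi_{\operatorname{ccr}}(p)=0$ and deduces $\nabla_\eta\nu(p)=0$), which is not a substantive departure.
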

\begin{Proof}
 Since $\eta$ is a null vector and $p$ is
 a singular point of rank one of
 $f$,   $f$ is a front at $p$ if and only if $\nabla_{\eta}\nu(p)\ne0$.
 Thus the necessity is obvious.
 Let us assume that $\psi_{\ccr}=0$ at $p$.
 Since $p$ is a singular point of the first kind,
 $\hat\gamma'\ne0$ holds at $p$.
 In particular,
 $\nabla_{\eta}\nu$ is a linear combination of
 $\hat \gamma'$ and $\nu$ at $p$.
 We now take a local coordinate system
 $(u,v)$ centered at $p$, such that the $u$-axis
 is a singular curve, and $v$-directions
 are null directions along the $u$-axis.
 Noticing that $\nabla$ is the Levi-Civita connection,
 it holds that
 \begin{align*}
  \inner{\nabla_{\eta}\nu}{f(\gamma(t))'}
  &=\inner{\nabla_{\partial_v}\nu}{df(\partial_u)} 
=
  (\inner{\nu}{f_u})_v
  -\inner{\nu}{\nabla_{\partial_v}df(\partial_u)}\\
  &=
  -\inner{\nu}{\nabla_{\partial_u}df(\partial_v)}
=
  -\inner{\nu}{df(\partial_v)}_u
  +\inner{\nabla_{\partial_u}\nu}{df(\partial_v)}
  =0,
 \end{align*}
 where $\inner{~}{~}$ is the inner product corresponding
 to the Riemannian metric $g$,
 and we used the fact that $df(\partial_v)=0$ since
 $\eta=\partial_v$ is the null direction.
 On the other hand, it holds that
 $\inner{\nabla_{\eta}\nu}{\nu}=0$ at $p$.
 Hence $\nabla_{\eta}\nu(p)=0$ holds.
\end{Proof}

By Fact~\ref{fact:criteria}, if $f$ is a front, then a
singular point of the first kind
gives a cuspidal edge.
When $f$ is a frontal but not a front, cuspidal cross caps are
typical examples of non-degenerate singular
points of the first kind.
Swallowtails are singularities of the second kind.
Moreover, if $f$ is a front, \lq non-degenerate peaks\rq\ in the sense of
\cite[Definition 1.10]{SUY} are also singular points of the second kind.

\begin{exa}
 Let
 $f_1\colon{}\R^2\to\R^3$ be the map
 defined by
 \[
    f_1(u,v):=(5u^4+2uv,v,4u^5+u^2v-v^2).
 \]
 Then the singular set is
 $\{10u^3+v=0\}$, the null vector field is
 $\eta=\partial_u$ 
 and $d\nu(\partial_u)=(-1,0,0)$ holds at the origin,
 where $\nu$ is the unit normal vector field.
 Hence
 $f_1$ is a front and $(0,0)$ is a singular point of the second kind
 (namely, a non-degenerate peak in the sense of \cite[Definition 1.10]{SUY})
 but not a swallowtail
 (see the left hand side of Fig.\ \ref{fig:2nd}).
 On the other hand,
 let $f_2\colon{} \R^2\to\R^3$ be the map
 defined by
 \[
   f_2(u,v):=(u^2+2v,u^3+3uv,u^5+5u^3v).
 \]
 Then the singular set is
 $\{v=0\}$, the null vector field is
 $\eta=\partial_u-u\partial_v$,
 and $d\nu(0,0)=(0,0,0)$ holds.
 Hence  $f_2$ is a frontal but not a front at the origin
 $(0,0)$.
 In fact, $(0,0)$  is a singular point of 
 the second kind (see the right hand side of Fig.\ \ref{fig:2nd}).
\end{exa}
\begin{figure}[htbp]
\centering
\begin{tabular}{c@{\hspace{7em}}c}
 \includegraphics[width=.25\linewidth]{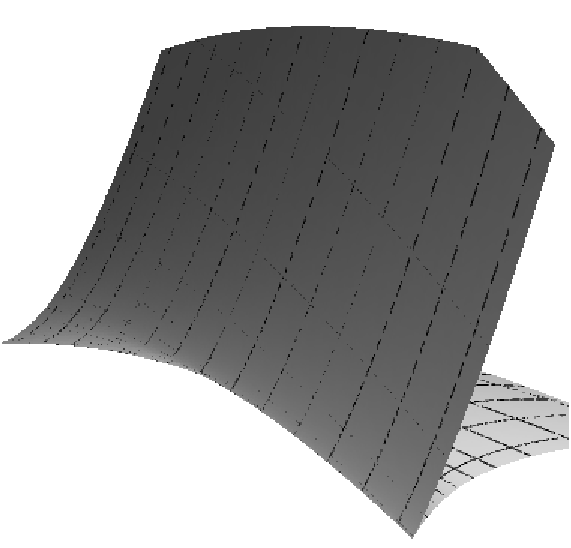}&
 \includegraphics[width=.25\linewidth]{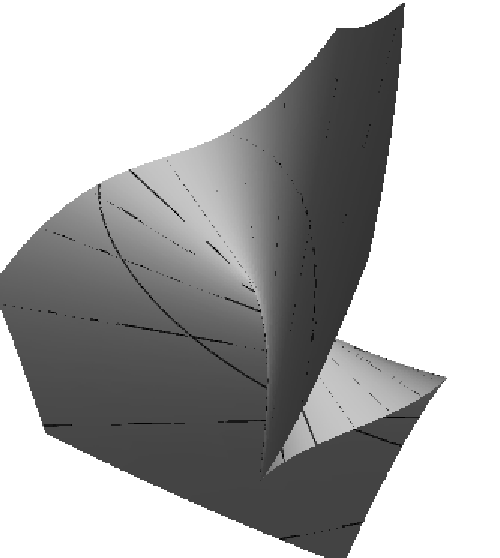}
\end{tabular}
\caption{Singularities of the second kind.}
\label{fig:2nd}
\end{figure}

\begin{defi}\label{def:admissible}
 Let $p$ be a rank one singular point of a frontal 
 $f\colon{}\Sigma^2\to M^3$.
 A local coordinate system $(\U;u,v)$ centered at $p$ is
 called {\it admissible\/} if $f_v(p)=0$
 and it is compatible with respect to
 the orientation of $\Sigma^2$.
\end{defi}

We denote by \lq$\inner{~}{~}$\rq\ the inner product on $M^3$
induced by the metric $g$, and 
$|\vect{a}|:=\sqrt{\inner{\vect{a}}{\vect{a}}}$
($\vect{a}\in TM^3$).
We fix a unit normal vector field $\nu$ of $f$.
Take an admissible coordinate system at a
rank one singular point $p$.
Then we define
\begin{equation}\label{eq:kn1}
 \kappa_\nu(p):=\frac{\inner{f_{uu}(p)}{\nu(p)}}{|f_u(p)|^2},
\end{equation}
which is called the {\it limiting normal curvature}.
Here, we denote $f_{uu}:=\nabla_{\partial_u}f_u$.
The definition of $ \kappa_\nu(p)$
does not depend on the choice of the admissible
coordinate system (see Proposition~\ref{prop:conti}).  
Moreover, $\kappa_\nu(p)$ does not depend on the choice of
the orientation of the source domain,
but depends on the co-orientability
(i.e., the $\pm$-ambiguity of $\nu$).
This definition \eqref{eq:kn1} of $ \kappa_\nu$ is
a generalization of the previously defined
limiting normal curvature 
\begin{equation}\label{eq:knp}
  \kappa_\nu(\gamma(t)):=
   \frac{\inner{\hat\gamma''(t)}
   {\nu(\gamma(t))}}{|\hat \gamma'(t)|^2}
\end{equation}
given in \cite{SUY}, where $\gamma(t)$
is a singular curve parameterizing 
cuspidal edges and 
\begin{equation}\label{eq:gamma-diff}
 \hat\gamma:=f\circ\gamma,\qquad
 \hat\gamma':=\frac{d}{dt} \hat\gamma,\qquad
 \hat\gamma'':=\nabla_{\hat\gamma'}\hat\gamma'.
\end{equation}
In fact, the following assertion holds:
\begin{prop}\label{prop:old}
 The new definition \eqref{eq:kn1} of
 the limiting normal curvature at $p$
 coincides with $\kappa_\nu$
 given in \eqref{eq:knp}
 when $p=\gamma(0)$ is a 
 non-degenerate
 singular point of the first kind.
\end{prop}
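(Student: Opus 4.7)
The plan is to exhibit a particular admissible coordinate system adapted to the first-kind assumption, and then show that in that system \eqref{eq:kn1} reduces literally to \eqref{eq:knp}; the independence of the coordinate choice is supplied by the later Proposition~\ref{prop:conti} which is cited in the definition of \eqref{eq:kn1}.

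First, I would use the first-kind hypothesis. Since $\gamma'(0)$ and $\eta(0)$ are linearly independent, one can choose a local coordinate system $(u,v)$ centered at $p$ such that the singular curve is the $u$-axis, i.e.\ $\gamma(t)=(t,0)$, and the null vector field along $\gamma$ is $\eta=\partial_v$. In such coordinates, $df(\partial_v)\equiv 0$ along the $u$-axis; in particular $f_v(p)=0$, and, after possibly reversing the sign of $v$, the coordinate system is compatible with the orientation of $\Sigma^2$. Thus $(u,v)$ is admissible in the sense of Definition~\ref{def:admissible}.

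Next, I would compute the right-hand side of \eqref{eq:knp} in these coordinates. Since $\hat\gamma(t)=f(t,0)$, we have
\[
 \hat\gamma'(t)=f_u(t,0),\qquad
 \hat\gamma''(t)=\nabla_{\hat\gamma'}\hat\gamma'=\nabla_{\partial_u}f_u\bigr|_{(t,0)}=f_{uu}(t,0).
\]
Evaluating at $t=0$ gives $\hat\gamma'(0)=f_u(p)$ and $\hat\gamma''(0)=f_{uu}(p)$, so that
\[
 \frac{\inner{\hat\gamma''(0)}{\nu(\gamma(0))}}{|\hat\gamma'(0)|^2}
 =\frac{\inner{f_{uu}(p)}{\nu(p)}}{|f_u(p)|^2},
\]
which is exactly the value given by the new definition \eqref{eq:kn1} in this admissible coordinate system.

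Finally, Proposition~\ref{prop:conti} asserts that the value given by \eqref{eq:kn1} is independent of the choice of admissible coordinate system. Hence the coincidence established above for the particular adapted system holds for every admissible system, which proves the proposition. I do not anticipate a serious obstacle: the only mild point to check is that the adapted coordinates really are admissible (which is automatic once $\eta=\partial_v$ along the singular curve), and that $\hat\gamma''$ as defined in \eqref{eq:gamma-diff} matches $\nabla_{\partial_u}f_u$ restricted to the $u$-axis, which is immediate from the definition of the Levi-Civita connection along a curve.
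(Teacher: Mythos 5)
Your proposal is correct and follows essentially the same route as the paper: choose an admissible coordinate system in which the $u$-axis is the singular curve and $\partial_v$ is the null direction, observe that $\hat\gamma(u)=f(u,0)$ so that \eqref{eq:kn1} and \eqref{eq:knp} literally agree there, and appeal to the first assertion of Proposition~\ref{prop:conti} for coordinate independence. Your write-up just makes explicit the computation of $\hat\gamma'$ and $\hat\gamma''$ that the paper leaves implicit.
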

\begin{Proof}
 Since $\gamma(t)$ consists of non-degenerate
 singular points of the first kind
 for sufficiently small $t$,
 we can take an admissible coordinate system $(u,v)$
 centered at $\gamma(0)$
 so that the $u$-axis is a singular curve and
 $\partial_v$ is the null direction.
 Since $f(u,0)=\hat \gamma(u)$,
 \eqref{eq:kn1} is exactly equal to \eqref{eq:knp}
 at $t=0$.
\end{Proof}

\begin{prop}[The continuity of the limiting normal curvature]
\label{prop:conti}
 Let $p$ be a rank one singular point of $f$.
 The definition of $\kappa_\nu$
 does not depend on the choice of the admissible coordinate
 system. 
 Moreover, if $p$ is non-degenerate and
 $\gamma(t)$ is
 a singular curve such that $\gamma(0)=p$,
 and if $\gamma(t)$ $(t\ne 0)$ consists of
 singular points of the first kind,
 then it holds that
 \begin{equation}\label{eq:kn2}
  \kappa_\nu(p)
  \left(=\lim_{t\to0}\kappa_{\nu}(\gamma(t))\right)
  =
  \lim_{t\to 0}
   \frac{\inner{\hat \gamma''(t)}
   {\nu(\gamma(t))}}{|\hat \gamma'(t)|^2},
 \end{equation}
 where $\hat \gamma(t):=f(\gamma(t))$ and
 $\hat\gamma''(t):=\nabla_{\hat\gamma'(t)} \hat\gamma'(t)$.
\end{prop}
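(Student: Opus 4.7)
My plan is to treat the two claims separately, first proving the coordinate-independence and then the limit formula, splitting the latter according to whether $p$ is of the first or the second kind.

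\textbf{Coordinate-independence.} Let $(u,v)$ and $(\tilde u,\tilde v)$ be two admissible charts centered at $p$. Since $f_v(p)=0=f_{\tilde v}(p)$ and $\ker df(p)\subset T_p\Sigma^2$ is one-dimensional (because $p$ has rank one), the Jacobian of the change of coordinates satisfies $(\partial\tilde u/\partial v)(p)=0$; hence $f_u(p)=(\partial\tilde u/\partial u)(p)\,f_{\tilde u}(p)$ with $(\partial\tilde u/\partial u)(p)\ne 0$. Computing $f_{uu}(p)$ via the chain rule and the pullback Levi-Civita connection, every term that contains either $f_{\tilde v}(p)$ or a covariant derivative $\nabla_\xi f_{\tilde v}(p)$ becomes orthogonal to $\nu(p)$: differentiating $\langle f_{\tilde v},\nu\rangle\equiv 0$ yields $\langle \nabla_\xi f_{\tilde v},\nu\rangle=-\langle f_{\tilde v},\nabla_\xi\nu\rangle$, which vanishes at $p$ since $f_{\tilde v}(p)=0$. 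What remains is
\[
 \langle f_{uu}(p),\nu(p)\rangle = \left(\frac{\partial\tilde u}{\partial u}(p)\right)^{\!2}\langle f_{\tilde u\tilde u}(p),\nu(p)\rangle,
\]
and together with $|f_u(p)|^2=(\partial\tilde u/\partial u)^2(p)\,|f_{\tilde u}(p)|^2$ this makes the ratio \eqref{eq:kn1} invariant.

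\textbf{Limit formula, first-kind case.} By Proposition \ref{prop:old}, the two definitions agree at every first-kind singular point, so it suffices to prove continuity of \eqref{eq:kn1} along $\gamma$ up to $t=0$. If $p$ is of the first kind, I would take a chart $(u,v)$ near $p$ in which the singular set is the $u$-axis and $\partial_v$ is a null vector field along it; then $f_v\equiv 0$ on $\{v=0\}$, the chart is admissible at every $\gamma(u)=(u,0)$, and $\kappa_\nu(\gamma(u))=\langle f_{uu}(u,0),\nu(u,0)\rangle/|f_u(u,0)|^2$ is manifestly smooth in $u$.

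\textbf{Limit formula, second-kind case.} Take admissible coordinates $(u,v)$ at $p$ and parameterize $\gamma$ by $s=v$, so $\gamma(s)=(u(s),s)$ with $u(0)=u'(0)=0$. The second-kind and non-degeneracy conditions give $\lambda_v(p)=0$ and $\lambda_u(p)\ne 0$, where $\lambda=\det_g(f_u,f_v,\nu)$; using $f_v(p)=0$, the equation $\lambda_v(p)=0$ reduces to $\det_g(f_u(p),f_{vv}(p),\nu(p))=0$. Combined with $\langle f_{vv}(p),\nu(p)\rangle=0$ (from differentiating $\langle f_v,\nu\rangle\equiv 0$), this yields the structural identity $f_{vv}(p)=a\,f_u(p)$ with $a=\langle f_{vv}(p),f_u(p)\rangle/|f_u(p)|^2$. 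Implicit differentiation of $\lambda(u(s),s)=0$ gives $u(s)=c s^2+O(s^3)$ for some $c$. Using the pullback-connection formula
\[
 \hat\gamma''(s)=u''(s) f_u + u'(s)^2 f_{uu} + 2u'(s) f_{uv} + f_{vv}
\]
evaluated along $\gamma(s)$, I would Taylor-expand numerator and denominator to order $s^2$. The denominator becomes $s^2(2c+a)^2|f_u(p)|^2+O(s^3)$, and the first-kind hypothesis at nearby points forces $2c+a\ne 0$. For the numerator, the critical observation is that $\langle f_{vv},\nu\rangle$ has a vanishing $s$-linear term along $\gamma$: its linear coefficient at $s=0$ equals $-\langle f_{vv}(p),\nu_v(p)\rangle=-a\langle f_u(p),\nu_v(p)\rangle$, and differentiating $\langle f_u,\nu\rangle\equiv 0$ together with $\langle f_{uv}(p),\nu(p)\rangle=0$ (from $\langle f_v,\nu\rangle\equiv 0$) shows this vanishes. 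After this cancellation, the $s^2$ coefficient of the numerator collapses to $(2c+a)^2\langle f_{uu}(p),\nu(p)\rangle$, so the ratio tends to $\kappa_\nu(p)$. The main obstacle is precisely this second-kind case: both numerator and denominator of \eqref{eq:knp} vanish quadratically at $s=0$, and extracting the correct limit requires the structural identity $f_{vv}(p)=af_u(p)$ together with the cancellations among the two-jets of $f$ and $\nu$ produced by $\langle f_u,\nu\rangle\equiv\langle f_v,\nu\rangle\equiv 0$.
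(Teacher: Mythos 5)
Your coordinate-independence argument and your treatment of the first-kind case are correct and essentially the paper's own: the key identity $\inner{f_{\tilde u\tilde v}}{\nu}=-\inner{f_{\tilde v}}{\nu_{\tilde u}}=0$ at $p$ is exactly what the paper uses, and the first-kind limit is handled there, as by you, by choosing coordinates in which the singular set is the $u$-axis and $\partial_v$ is null along it.

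The second-kind case, however, contains a genuine gap: the claim that ``the first-kind hypothesis at nearby points forces $2c+a\ne 0$'' is not justified and is in fact false. Note that $\hat\gamma''(0)=(2c+a)f_u(p)$ in your notation, so $2c+a\ne0$ is equivalent to $\hat\gamma''(0)\ne0$; this holds at swallowtails (cf.\ \eqref{eq:gamma2}, which uses $\varepsilon'(0)\ne0$ from Fact \ref{fact:criteria} \ref{item:Fact2}), but not at general non-degenerate points of the second kind. The paper's own example $f_1(u,v)=(5u^4+2uv,v,4u^5+u^2v-v^2)$ is a counterexample: the origin is a non-degenerate peak whose nearby singular points are all of the first kind, yet $\hat\gamma(t)=(-15t^4,-10t^3,-6t^5-100t^6)$, so $\hat\gamma''(0)=0$ and $|\hat\gamma'|^2$ vanishes to order $4$, not $2$. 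That $\hat\gamma'(s)\ne0$ for $s\ne0$ only tells you the curve $\hat\gamma$ is nonconstant near $0$, not that its leading coefficient at order $s$ survives. When $2c+a=0$ your expansion of numerator and denominator to order $s^2$ yields $0/0$ and the argument collapses; one would have to expand to an a priori unbounded order determined by the order of tangency between the null direction and the singular curve. (A secondary, smaller issue: even when $2c+a\ne0$, the asserted ``collapse'' of the $s^2$-coefficient of the numerator to $(2c+a)^2\inner{f_{uu}(p)}{\nu(p)}$ requires the identity $\tfrac{d^2}{ds^2}\inner{f_{vv}}{\nu}\circ\gamma|_{s=0}=2a^2\inner{f_{uu}(p)}{\nu(p)}$, which you assert but do not verify.)

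The paper avoids this entirely and does not case-split on the kind of $p$: it chooses one coordinate system with $f_v(\gamma(t))=0$ and $|f_u(\gamma(t))|=1$ along the whole singular curve, so that $\kappa_\nu(\gamma(t))=\inner{f_{uu}(\gamma(t))}{\nu(\gamma(t))}$ is manifestly continuous in $t$, and then identifies this with the quotient $\inner{\hat\gamma''}{\nu}/|\hat\gamma'|^2$ at the first-kind points $t\ne0$ via Proposition \ref{prop:old}. I recommend you replace your second-kind computation with an argument of this type, which sidesteps the indeterminate $0/0$ analysis altogether.
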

\begin{Proof}
 Let $(U,V)$ be another admissible coordinate system.
 Then $U_u(p)\ne 0$ holds.
 Moreover,
 \begin{align*}
  \inner{f_{uu}}{\nu}
  &=-\inner{f_{u}}{\nu_u} =
  - U_u \inner{f_U}{U_u\nu_U +V_u\nu_V}\\
  &=-U_u^2\inner{f_U}{\nu_U}-U_uV_u\inner{f_U}{\nu_V}
 \end{align*}
 holds at $p$,
 where $\nu_u=\nabla_{\partial_u}\nu$, etc.
 Since
\begin{equation}\label{eq:fuv}
  \inner{f_U}{\nu_V}=-\inner{f_{UV}}{\nu}=
   -\inner{f_{VU}}{\nu}=\inner{f_V}{\nu_U}=0
\end{equation}
 at $p$,
 we have that
 \[
      \frac{\inner{f_{uu}}{\nu}}{|f_u|^2}
       =-\frac{U_u^2 \inner{f_{U}}{\nu_U}}{U_u^2|f_U|^2}=
         \frac{\inner{f_{UU}}{\nu}}{|f_U|^2}
 \]
 at $p$, which proves the first assertion.
The second assertion follows immediately
from \eqref{eq:knp} and Proposition \ref{prop:old}.
\end{Proof}

\begin{rem}
 Recently, Nu\~no-Ballesteros and the 
 first author \cite{MB}
 defined a notion of
 umbilic curvature $\kappa_u$ for rank one singular points
 in $\R^3$.
 We remark that the unsigned limiting normal curvature
 $\kappa_n:=|\kappa_\nu|$ coincides with $\kappa_u$
 for cuspidal edges,
 see \cite{MS} for details.
\end{rem}

\begin{exa}
 Define a $C^{\infty}$-map  $f\colon{}\R^2\to\R^3$ by
 \[
   f(u,v)= \left(u^4-4u^2v, u^3-3uv,\frac{u^2}{2}-v \right)
            + (u^2-2v)^2(a,b,0)\quad (a,b\in\R).
 \]
 Then the singular set of $f$ is
 $\{v=0\}$ and the origin is  a swallowtail.
 The unit normal vector field of $f$ is given by 
 \[
    \nu(u,v):=
    \frac{\big(3,-8u,-4(3 (-1+a) u^2-8 b u^3-6 a v+16 b u v)\big)}
     {\sqrt{9+64u^2+16(3 (-1+a) u^2-8 b u^3-6 a v+16 b u v)^2}}.
 \]
 Since $f_u(0,0) = 0$ holds,
 by \eqref{eq:kn1},
 the limiting normal curvature at $(0,0)$ is
 $8a$ (cf.\ \eqref{eq:kn1}).
 On the other hand,
 the limiting normal curvature on
 $\hat\gamma(u)=f(u,0)$, $u\ne0$ is
 \[
    \frac{
       \inner{\hat\gamma''(u)}{\nu(u,0)}}
            {|\hat\gamma'(u)|^2}
        =8 a-\frac{64 b}{3}u+O(u^2),
 \]
 where
 $O(u^{\alpha})$ ($\alpha>0$)
 is a term such that 
 $O(u^{\alpha})/|u|^{\alpha}$ is bounded near $u=0$.
\end{exa}

Let $f:\Sigma^2\to (M^3,g)$ be a frontal.
Suppose that $p$ is of the first kind.
Let $\kappa(t)=|\hat \gamma''(t)|$
be the curvature function of $\hat \gamma:=f\circ \gamma$ in $M^3$,
where $t$ is the arclength parameter of $\hat\gamma$.
Then $\kappa_\nu=\inner{\hat \gamma''(t)}{\nu}$ gives the normal
part of $\hat\gamma''(t)$.
On the other hand, we set
\begin{equation}\label{eq:ks}
 \kappa_s(t):=\sgn(d\lambda(\eta))\,
            \det_g\biggl (\hat\gamma'(t),\hat\gamma''(t),\nu(\gamma(t))
                \biggr),
\end{equation}
which is called the {\it singular curvature},
where $\eta(t)$ is a null vector field along $\gamma$
such that $\{\gamma',\eta\}$ is compatible with the orientation of
$\Sigma^2$,
and $\sgn(d\lambda(\eta))$ is the sign of the function
$d\lambda(\eta)$ at $\gamma(t)$.
Since $\kappa_s$ can be considered as the tangential
component of $\hat\gamma''(t)$,
it holds that
\begin{equation}\label{eq:k3}
  \kappa^2=(\kappa_s)^2+(\kappa_\nu)^2.
\end{equation}
When $p$ is a cuspidal edge, the singular curvature is
defined in \cite{SUY}. In this case, it was shown in \cite{SUY}
that $\kappa_s$ depends only on the
first fundamental form (namely, it
is {\it intrinsic\/}) 
and we can prove the following assertion as an application of
\cite{SUY}.
\begin{fact}\label{fact:2}
 Let $p$ be a non-degenerate singular point
 of the first kind of a frontal $f$.
 If the Gaussian curvature function $K$ is bounded
 near $p$, then the limiting normal curvature
 $\kappa_\nu$ vanishes at $p$.
\end{fact}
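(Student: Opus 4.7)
The plan is to reduce to a local computation in admissible coordinates adapted to a singular point of the first kind, and to extract the singular leading term of the Gaussian curvature.

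First, I would choose a local coordinate system $(u,v)$ centered at $p$ such that the singular curve is the $u$-axis and $\partial_v$ is a null direction, so that $f_v(u,0)\equiv 0$. Writing $f_v=v\,h$ (with $h(u,0)=f_{vv}(u,0)$), the non-degeneracy of $p$ combined with the first-kind condition forces $f_u(p)$ and $f_{vv}(p)$ to be linearly independent, giving
\[
 EG - F^2 = v^2\,\mu(u,v),\qquad \mu(0,0)>0,
\]
so that the denominator of the standard expression $K=(LN-M^2)/(EG-F^2)$ vanishes exactly to order $v^2$ along the singular curve.

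Next, I would analyse the numerator. Using the frontal identity $\inner{f_v,\nu}\equiv 0$ together with $f_v(u,0)=0$, differentiating in $u$ and in $v$ respectively yields $M(u,0)=\inner{f_{uv},\nu}(u,0)=0$ and $N(u,0)=\inner{f_{vv},\nu}(u,0)=0$, while $L(u,0)=E(u,0)\,\kappa_\nu(u)$ by the very definition in \eqref{eq:kn1}. Taylor-expanding in $v$ around the singular curve gives
\[
 LN - M^2 = v\cdot E(u,0)\,\kappa_\nu(u)\,N_v(u,0)+O(v^2),
\]
and hence
\[
 K = \frac{E(u,0)\,\kappa_\nu(u)\,N_v(u,0)}{v\,\mu(u,v)} + \bigl(\text{bounded}\bigr)\qquad (v\to 0).
\]

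Finally, I would draw the conclusion by a boundedness argument. If $K$ is bounded in a punctured neighborhood of $p$, the coefficient of $1/v$ must vanish identically for $u$ near $0$, so $\kappa_\nu(u)\,N_v(u,0)\equiv 0$. Suppose for contradiction that $\kappa_\nu(p)\ne 0$; then by continuity $\kappa_\nu(u)\ne 0$ on a neighborhood of $u=0$, forcing $N_v(u,0)\equiv 0$ there. Differentiating $\inner{f_v,\nu}\equiv 0$ twice in $v$ yields $N_v(u,0)=-\inner{f_{vv}(u,0),\nu_v(u,0)}$, and combining with $\nu_v\perp\nu$ and $\nu_v\perp f_u$ at $v=0$ (which follows from $f_{uv}(u,0)=0$) shows $\nu_v(u,0)$ is parallel to $f_{vv}(u,0)$, hence $\nu_v\equiv 0$ on the singular curve near $p$. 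This is the main obstacle: for cuspidal edges (fronts) it directly contradicts the immersivity of $\nu$, while for general frontals of the first kind one invokes \cite{SUY} — using the intrinsic expression of $\kappa_\nu^2=\kappa^2-\kappa_s^2$ from \eqref{eq:k3} and the intrinsicality of $\kappa_s$ to argue that such a degenerate situation is incompatible with $\kappa_\nu(p)\ne 0$ under the assumed boundedness of $K$.
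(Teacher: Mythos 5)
Your computation is sound up to and including the leading-term identity
\[
K=\frac{E(u,0)\,\kappa_\nu(u)\,N_v(u,0)}{v\,\mu(u,v)}+O(1)
\]
(for a general ambient $(M^3,g)$ one should first split off the bounded term via $K=K_{\ext}+c_g(\nu^\perp)$ as in \eqref{eq:cg}, but this changes nothing), and the conclusion you can legitimately draw from the boundedness of $K$ is $\kappa_\nu(u)\,N_v(u,0)\equiv 0$ along the singular curve. Observe that $N_v(u,0)=-\inner{f_{vv}}{\nu_v}(u,0)$ coincides, up to a positive factor, with the cuspidal curvature $\kappa_c(u)$ of Section \ref{sec:cuspidal}; so what your argument actually establishes is $\kappa_\Pi=\kappa_\nu\kappa_c\equiv 0$, i.e.\ the content of Theorem \ref{thm:main1}, not yet the Fact as stated.

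The genuine gap is your final step. For a front the argument closes: $\nu_v(p)\neq0$ is exactly the front condition at a rank one singular point with null direction $\partial_v$, so the alternative $N_v(u,0)\equiv0$ (equivalently $\nu_v\equiv0$ on the singular curve, as you correctly deduce) is excluded and $\kappa_\nu(p)=0$ follows. But for a frontal that is not a front, your proposed resolution --- invoking $\kappa_\nu^2=\kappa^2-\kappa_s^2$ and the intrinsicality of $\kappa_s$ --- yields no contradiction: those identities hold for every frontal and impose no link between the boundedness of $K$ and the vanishing of $\kappa_\nu$. In fact this step cannot be repaired by any argument: the $5/2$-cuspidal edge $f(u,v)=(u,au^2+v^2,cu^2+bv^5)$ with $c\neq0$ (the example following Corollary \ref{cor:52cusp}) is a non-degenerate singular point of the first kind of a frontal at which $K$ is bounded while $\kappa_\nu(p)=2c\neq0$; there $\kappa_c\equiv0$, so only the product $\kappa_\nu\kappa_c$ vanishes. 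Your route, carried out honestly, therefore proves the assertion only under the additional hypothesis that $f$ is a front at $p$ (equivalently $\kappa_c(p)\neq0$, cf.\ Lemmas \ref{lem:psiccr} and \ref{lem:kappac-psiccr}). Note that the paper's own proof takes an entirely different route, simply citing \cite[Theorem 3.1]{SUY} and asserting that the argument there \lq\lq needed only that $f$ is a frontal\rq\rq; your self-contained computation has the merit of exposing precisely where that extension from fronts to frontals is problematic.
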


\begin{Proof}
 In the first paragraph of \cite[Theorem 3.1]{SUY},
 it was stated that the second fundamental form of
 a front $f$
 vanishes at non-degenerate singular points
 if $K$ is bounded.
 In fact, the proof there needed only
 that $f$ is a frontal.
 By \eqref{eq:kn1}, $\kappa_\nu$ is a coefficient
 of the second fundamental form, and must vanish.
\end{Proof}

Fact \ref{fact:2} 
suggests
the existence of a new intrinsic invariant related
to the behavior of $\kappa_\nu$ and the Gaussian curvature.
This is our motivation for introducing the \lq product curvature'
in the following section.
As a consequence of
our following discussions,
the converse assertion of
Fact~\ref{fact:2} 
is obtained in Theorem \ref{thm:main1}.
For a non-degenerate singular point $p$ of the first kind, the
derivatives
of the
limiting normal curvature and the singular curvature with respect to
the arclength parameter $t$ of
$\hat\gamma(t):=f\bigl(\gamma(t)\bigr)$
\[
  \kappa'_\nu(p):=
  \left. \frac{d \kappa_\nu(t)}{dt}\right|_{t=0},
  \quad
  \kappa'_s(p):=
  \left. \frac{d \kappa_s(t)}{dt}\right|_{t=0},
\]
where $\gamma(t)$ is the singular curve such that $p=\gamma(0)$,
are called the {\it derivate limiting normal curvature\/}
and the {\it derivate singular curvature}, respectively,
which will be useful in the following sections.

\section{Cuspidal curvature}
\label{sec:cuspidal}
Let $\sigma(t)$ be a curve in the Euclidean plane $\R^2$
and suppose that $t=0$ is a $3/2$-cusp.
Then the {\it cuspidal curvature\/} of the $3/2$-cusp
is given by (cf.\ \cite{SUY2} and \cite{SU})
\begin{equation}\label{eq:kc0}
 \tau:=\frac{\det(\sigma''(0),\sigma'''(0))}{|\sigma''(0)|^{5/2}}.
\end{equation}
In \cite{SU}, the following formula was shown
\begin{equation}\label{eq:kc02}
 \tau=2\sqrt{2}\lim_{t\to 0}\sqrt{|s(t)|}\kappa(t),~
 \quad s(t):=\int_{0}^t |\sigma'(w)|\,dw,
\end{equation}
where $\kappa(t)$ is the curvature function of
$\sigma(t)$.

Let $p$ be a singular point of the first kind of
a frontal $f\colon{}\Sigma^2\to (M^3,g)$,
and $\gamma(t)$ a singular curve such that $\gamma(0)=p$. 
Let $\eta$
be a non-vanishing vector field defined on a neighborhood of $p$
such that $\eta$ points in the null direction along $\gamma$,
which is called an {\it extended null vector field}. We set
\[
   f_{\eta}:=df(\eta),\quad
   f_{\eta\eta}:=\nabla_{\eta}f_{\eta}, \quad
   f_{\eta\eta\eta}:=\nabla_{\eta}f_{\eta\eta}.
\]
By definition, $f_{\eta}$ vanishes along $\gamma$.
Let $(u,v)$ be an admissible coordinate system at $p$.
Since $\partial_u$ and $\eta$ are linearly independent at $p$,
we may
consider $\lambda$ in Definition~\ref{def:lambda}  as
$\lambda=\det_g(f_u,f_{\eta},\nu)$.
Since $\lambda=0$ along $\gamma$,
the non-degeneracy of $p$ yields that
\begin{equation}
 \label{eq:non-degfvv}
   0\ne \lambda_{\eta}=\det_g{(f_u,f_{\eta\eta},\nu)},
\end{equation}
namely,
$f_u$ and $f_{\eta\eta}$ are linearly independent
at the non-degenerate singular point.
Define the exterior product $\times_g$ so that
\[
  \inner{\vect a \times_g \vect b}{\vect c}
     =\det_g(\vect a, \vect b,\vect c)
\]
holds for each $\vect a, \vect b, \vect c\in T_qM^3$
($q\in M^3$).
Since the singular points are non-degenerate,
the linear map $df:T_{\gamma(t)}\Sigma^2\to T_{\hat\gamma(t)}M^3$ 
is of rank $1$.
Hence $\hat \gamma'(t)$ is proportional to $f_u$, and then
$\hat\gamma'(t)\times_g f_{\eta\eta}\bigl(\gamma(t)\bigr)$
does not vanish, where $\hat \gamma(t):=f\circ \gamma(t)$. 
Then we define the {\it cuspidal curvature\/}  for
singular points of the first kind as
\begin{equation}\label{eq:kc}
 \kappa_c(t):=
  \frac{\bigl|\hat \gamma'(t)\bigr|^{3/2}\,\,%
     \det_g\biggl(
       \hat
       \gamma'(t),f_{\eta\eta}(\gamma(t)),f_{\eta\eta\eta}(\gamma(t))
       \biggr)}{%
       \biggl|\hat \gamma'(t)\times_g f_{\eta\eta}(\gamma(t))
       \biggr|^{5/2}},
\end{equation}
where $\hat \gamma=f(\gamma(t))$ and $\eta$ is chosen so that
$\{\gamma', \eta\}$ consists of
a positively oriented frame on $\Sigma^2$ along $\gamma$.
If $t$ is the arclength parameter of $\hat\gamma$, then
\[
   \kappa'_c(p):=\left. \frac{d \kappa_c(t)}{dt}\right |_{t=0}
\]
is called the {\it derivate cuspidal curvature}.
The two invariants $\kappa_c,\kappa'_c$ do not depend on the choice of
the unit normal vector $\nu$.

The following assertion gives a geometric meaning of
cuspidal curvature.

\begin{prop}\label{prop:geom0}
 Let $f$ be a front in $\R^3$ and $p$ a cuspidal edge,
 and let $\gamma(t)$ be the singular curve
 such that $\gamma(0)=p$.
 Then the intersection of the image of $f$ and a plane $P$
 passing through $f(p)$
 perpendicular to $\hat\gamma'(0)$
 consists of a $3/2$-cusp $\sigma$ in $P$.
 Furthermore, $\kappa_c(0)$ coincides
 with the cuspidal curvature of 
 the plane curve $\sigma$
 {\rm(}cf.\ \eqref{eq:kc0}{\rm)} at $p$.
\end{prop}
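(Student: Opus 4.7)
The plan is to set up admissible coordinates, parametrize the planar section using the implicit function theorem, and then directly match the resulting $3/2$-cusp invariant with the formula \eqref{eq:kc} for $\kappa_c(0)$.

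First I take admissible coordinates $(u,v)$ at $p$ so that the $u$-axis is the singular curve $\gamma$ and $\partial_v$ is the null direction. Then $\hat\gamma(u) = f(u,0)$, $\hat\gamma'(0) = f_u(0,0)$, and $f_v(u,0) \equiv 0$, which also yields $f_{uv}(u,0) \equiv 0$. Setting
\[
\phi(u,v) := \inner{f(u,v) - f(0,0)}{f_u(0,0)},
\]
one has $\phi_u(0,0) = |f_u(0,0)|^2 \neq 0$, so the implicit function theorem produces a smooth function $u = u(v)$ with $u(0) = 0$ and $\phi(u(v), v) \equiv 0$. The local intersection of the image of $f$ with $P$ is then parametrized by $\sigma(v) := f(u(v), v)$.

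Next, implicit differentiation of $\phi(u(v), v) \equiv 0$ combined with $f_v(0,0) = 0$ and $f_{uv}(0,0) = 0$ gives $u'(0) = 0$ and $u''(0) = -\inner{f_{vv}(0,0)}{f_u(0,0)}/|f_u(0,0)|^2$. Substituting these into the chain-rule expansion of the derivatives of $\sigma$ at $0$ yields $\sigma'(0) = 0$ together with
\[
\sigma''(0) = f_{vv}(0,0) + u''(0) f_u(0,0), \qquad
\sigma'''(0) = f_{vvv}(0,0) + u'''(0) f_u(0,0).
\]
By construction $\sigma''(0)$ is the orthogonal projection of $f_{vv}(0,0)$ onto $P$, so $|\sigma''(0)| = |f_u(0,0) \times f_{vv}(0,0)|/|f_u(0,0)|$, which is nonzero by the non-degeneracy relation \eqref{eq:non-degfvv}. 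Equipping $P$ with the orientation for which $N := f_u(0,0)/|f_u(0,0)|$ is the positive unit normal, the signed area form on $P$ reads $(a,b)\mapsto \det(a,b,N)$. Since $\sigma''(0)$ and $\sigma'''(0)$ differ from $f_{vv}(0,0)$ and $f_{vvv}(0,0)$ only by multiples of $f_u(0,0)$, this signed area collapses to
\[
\det(\sigma''(0), \sigma'''(0), N) = \frac{\det(f_u, f_{vv}, f_{vvv})\big|_{(0,0)}}{|f_u(0,0)|}.
\]
Inserting this and the expression for $|\sigma''(0)|$ into the planar cuspidal curvature formula \eqref{eq:kc0} gives exactly \eqref{eq:kc} at $t=0$ with $\eta = \partial_v$, and the proof concludes.

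The main technical subtlety is the orientation bookkeeping: one must verify that the above choice of orientation on $P$ is compatible with the positively oriented frame $\{\gamma', \eta\}$ built into \eqref{eq:kc}, so that the two signed invariants agree rather than differ by a sign. As a secondary point, confirming that $\sigma$ is a genuine $3/2$-cusp amounts to showing $\det(f_u, f_{vv}, f_{vvv})(0,0) \neq 0$; differentiating $\inner{\nu}{f_v}\equiv 0$ twice in $v$ at $p$ gives $\inner{\nu}{f_{vvv}}(p) = -2\inner{\nu_v}{f_{vv}}(p)$, and the front condition $\psi_{\ccr}(p) \neq 0$ from Lemma \ref{lem:psiccr}, together with non-degeneracy, forces $\inner{\nu_v}{f_{vv}}(p) \neq 0$, hence the required linear independence.
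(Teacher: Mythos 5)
Your proof is correct and follows essentially the same route as the paper: parametrize the planar section via the implicit function theorem, compute $\sigma''(0)$ and $\sigma'''(0)$, and match the result with \eqref{eq:kc0} and \eqref{eq:kc}. The one divergence is that you normalize the coordinates so that $f_v\equiv 0$ along the whole singular curve, which forces $f_{uv}(p)=0$ and removes the cross term $3u''(0)f_{uv}(p)$ from $\sigma'''(0)$ outright, whereas the paper works with a merely admissible coordinate system and instead eliminates the surviving term $3u''(0)\det\bigl(f_u(p),f_{vv}(p),f_{uv}(p)\bigr)$ by invoking Whitney's cross-cap criterion; your normalization is legitimate for a singular point of the first kind and makes that appeal unnecessary.
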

\begin{Proof}
 Without loss of generality, 
 we may assume that $p = (0,0)$
 and $f(p)$ is the origin in $\R^3$.
 We denote by $\Gamma$ the
 intersection of the image of $f$
 and the plane $P$ as in the statement.
 Let $(u,v)$ be an admissible coordinate system.
 Then we have that
 \[
    \Gamma=
      \biggl\{f(u,v)\,;\,
       \inner{f(u,v)}{f_u(0,0)}=0
      \biggr\}.
 \]
 Since $f_u(0,0)\ne 0$,
 by applying the implicit  function theorem
 for 
\[
    \inner{f(u,v)}{f_u(0,0)}=0,
\]
 there exists a $C^\infty$-function
 $u=u(v)$ ($u(0)=0$) such that
 \begin{equation}\label{eq:implicit}
  \inner{f\bigl(u(v),v\bigr)}{f_u(0,0)}=0, 
 \end{equation}
 and
 \begin{equation}\label{eq:sigma}
  \sigma(t):=f(u(t),t)
 \end{equation}
 gives a parametrization of
 the set $\Gamma$.
 Differentiating \eqref{eq:implicit},
 we have that
 \begin{equation}\label{eq:implicit2}
  \inner{u'(v)f_u(u(v),v)+f_v(u(v),v)}{f_u(0,0)}=0,
 \end{equation}
 where $u':=du/dv$.
 Since $f_v(0,0)=0$, we have that
 \begin{equation}\label{eq:implicit20}
  u'(0)=0.
 \end{equation}
 Differentiating \eqref{eq:implicit2} again,
 one can get
 \begin{equation}\label{eq:implicit3}
  u''(0)=
   -\frac{\inner{f_{u}(0,0)}{f_{vv}(0,0)}}
   {\inner{f_{u}(0,0)}{f_{u}(0,0)}}.
 \end{equation}
 On the other hand,
 differentiating \eqref{eq:sigma},
 the equation 
 \eqref{eq:implicit20} yields that
 \begin{align}
  \sigma'(0)&=0, \nonumber\\
  \label{eq:16}
  \sigma''(0)&=f_{vv}(0,0)+f_u(0,0)u''(0), \\
  \sigma'''(0)&=
  f_{vvv}(0,0)+3f_{uv}(0,0)u''(0)+f_u(0,0)u'''(0),
  \nonumber
 \end{align}
 which imply that
 \begin{multline*}
  \det\bigl(f_{u}(p),\sigma''(0),\sigma'''(0)\bigr)
  =
  \det \bigl(f_{u}(p),f_{vv}(p),
  f_{vvv}(p)+3f_{uv}(p)u''(0)\bigr)\\
  =
  \det(f_{u}(p),f_{vv}(p),
  f_{vvv}(p))
  +
  3u''(0)\det(f_{u}(p),f_{vv}(p),
  f_{uv}(p)).
 \end{multline*}
 Since $p$ is not a cross cap,
 $\det(f_{u}(p),f_{vv}(p), f_{uv}(p))$ 
 vanishes  by the well-known Whitney's
 criterion for cross caps \cite[Page 161 (b)]{whitney}.
 Thus, we get the identity
 \begin{equation}\label{eq:bunshi}
  \det(f_{u}(p),\sigma''(0),\sigma'''(0))
   =
   \det(f_{u}(p),f_{vv}(p),
   f_{vvv}(p)).
 \end{equation}
 By \eqref{eq:16} and \eqref{eq:implicit3}, we have that
 \begin{equation}\label{eq:bunbo}
  |\sigma''(0)|^2
   =\inner{f_{vv}(p)}{f_{vv}(p)}-
   \frac{\inner{f_{vv}(p)}{f_{u}(p)}^2}
   {\inner{f_{u}(p)}{f_{u}(p)}}
   =
   \frac{|f_{u}(p)\times f_{vv}(p)|^2}
   {|f_{u}(p)|^2}.
 \end{equation}
 By \eqref{eq:kc0}, \eqref{eq:kc},
 \eqref{eq:bunshi} and
 \eqref{eq:bunbo}, we get the assertion.
\end{Proof}
\begin{rem}\label{rem:MS}
 In \cite{MS}, a normal form of a cuspidal edge
 singular point in $\R^3$ was given, and
 its proof can be applied to a given
 singular point $p$
 of the first kind without any modification.
 So there exist an isometry $T$  of $\R^3$ and 
 a local coordinate system $(u,v)$
 centered at  $p$ such that
 \[
  T\circ f(u,v)=\left(
         u,\frac{a(u)u^2+v^2}2,
         \frac{b_0(u)u^2+b_2(u)uv^2}2+\frac{b_3(u,v)v^3}6
    \right),
 \]
 where $a$, $b_0$, $b_2$ and $b_3$ are $C^\infty$-functions.
 It holds that
 \[
   \kappa_s(p)=a(0),\quad
   \kappa_\nu(p)=b_0(0),\quad
   \kappa_c(p)=b_3(0,0).
 \]
 Moreover, we have that
 \begin{align*}
  \kappa'_s(p)&=b_0(0)b_2(0)+3a'(0), \\
  \kappa'_\nu(p)&=-a(0)b_2(0)+3b'_0(0), \quad
  \kappa'_c(p)=(b_3)_u(0,0).
 \end{align*}
 In particular, since $\kappa_s$ is intrinsic,
 so is $\kappa'_s$, which gives
 a geometric meaning for the coefficient $a'(0)$.
 On the other hand,
 \[
    k_t(p):=b_2(0),
      \qquad
    k_i(p):=3b'_{0}(0),
 \]
 are called
 the {\it cusp-directional torsion\/}
 and
 the {\it edge inflectional curvature}, which were
 investigated in \cite{MS}.
\end{rem}

\begin{figure}[htbp]
\centering
 \begin{tabular}{c@{\hspace{8em}}c}
\raisebox{3ex}{
  \includegraphics[width=.25\linewidth]{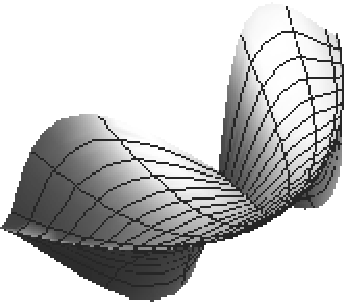}}&
  \includegraphics[width=.22\linewidth]{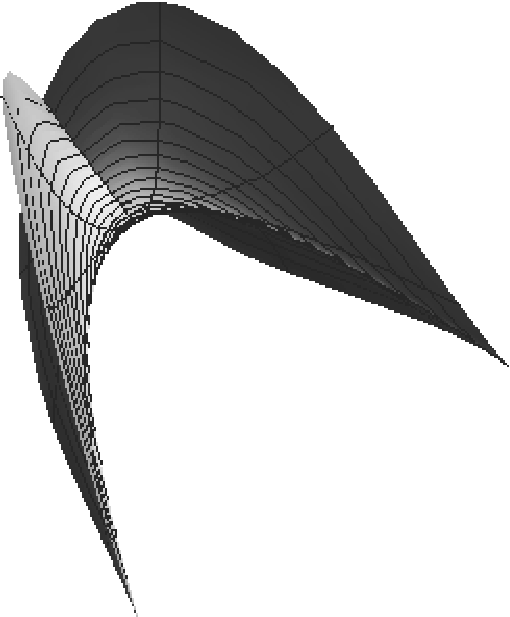}
 \end{tabular}
 \caption{Cuspidal cross caps of $\kappa_s>0$ and of $\kappa_s<0$.}
 \label{fig:ccr}
\end{figure}

\begin{exa}\label{exa:new}
 Let $\kappa_{\nu}$, $\kappa_c$ and $c$ ($>0$) be constants, 
 and set
 \[
   f(u,v):=
     \left(u,\frac{\kappa_s u^2}{2}+\frac{v^2}{2},
     \frac{c u v^3}{6} +
    \frac{\kappa_\nu u^2}{2}
     \right).
 \]
 Then the $u$-axis is the singular curve, and
 $\partial/\partial v$ gives the null direction.
 A unit normal vector of $f$ is given by
 \[
    \nu:=\frac1{\delta}
           \left(3 c \kappa_s u^2 v-c v^3-6 \kappa_\nu u,
                -3 c    u v,
             6\right),
 \]
 where
 \[
    \delta:=
      \sqrt{9 c^2 u^2 v^2+\left(c v \left(v^2-3 \kappa_s u^2\right)+
         6 \kappa_\nu u\right)^2+36}.
 \]
 By \ref{item:Fact3} of Fact~\ref{fact:criteria},
 $f(u,v)$ has a cuspidal cross cap
 singular point at $(0,0)$.
 The singular curvature 
 and the limiting normal curvature
 of $f$ at $(0,0)$ are equal to $\kappa_s$
 and $\kappa_\nu$, respectively.
 Fig.\ \ref{fig:ccr} left (resp.\ right) is the
 case with $c=6,\kappa_{\nu}=0$ and  $\kappa_s=2$ 
 (resp.\ $\kappa_s=-2$). The cuspidal curvature along the $u$-axis is 
 \[
  \kappa_c(u)=
   \frac{cu\bigl(1+\left((\kappa_{\nu})^2+(\kappa_s)^2\bigr) u^2\right)^{3/4}}{%
                (1+(\kappa_{\nu})^2 u^2)^{5/4}}.
 \]
\end{exa}

Let $p$ be a point of $\Sigma^2$. 
One can define the \lq blow up\rq\
of the manifold $\Sigma^2$ at $p$, that is,
there exist a $C^\infty$-manifold
$\hat \Sigma^2_p$ and consider the blowing up
$\Phi:\hat \Sigma^2_p\to \Sigma^2$
as in the appendix.

\begin{defi}\label{def:rational}
 Let $\U\,(\subset \Sigma^2)$ be a   
 neighborhood of $p$, and
 $\O$ an open dense subset of $\U$.
 A real-valued $C^\infty$-function $\phi$ defined on 
 $\O$ is called {\it rationally bounded\/}
 at $p$
 if 
 there exists a real-valued $C^\infty$-function
 $\lambda$
 defined on $\Phi^{-1}(\U)(\subset \hat \Sigma^2_p)$ 
 such that
 \begingroup
 \renewcommand{\theenumi}{{\rm(\alph{enumi})}}
 \renewcommand{\labelenumi}{{\rm(\alph{enumi})}}
 \begin{enumerate}
  \item\label{item:rat-a}
       $\Phi^{-1}(\O\setminus\{p\})=
       \Phi^{-1}(\U\setminus \{p\})\cap \lambda^{-1}(\R\setminus\{0\})$,
  \item\label{item:rat-b}
       $\lambda^{-1}(\{0\})\cap \Phi^{-1}(\{p\})$ 
       is a finite set, and 
  \item\label{item:rat-c}
       there exists a $C^\infty$-function
       $\psi:\Phi^{-1}(\U)\to \R$ such that
       $\psi(q)=\lambda(q) \,\phi(\Phi(q))$ holds
       for $q\in \Phi^{-1}(\O)$.
 \end{enumerate}
 \endgroup
 Moreover, if  there exists a constant $c$ such that
\begin{equation}\label{eq:psi-lambda}
\psi(q)=c \lambda(q)\qquad (q\in \Phi^{-1}(\{p\})),
\end{equation}
then the function $\varphi$ 
is called {\it rationally continuous\/}
at $p$. 
\end{defi}

By definition, the continuity implies the rational continuity,
and the rational continuity implies the rational boundedness.
If $\phi$ is not continuous but rationally continuous 
or rationally bounded at $p$, then $p\not \in \O$.
Let $(\U;u,v)$ be a local coordinate system
centered at $p$.
We set
\[
   u:=r \cos \theta,\quad v:=r \sin \theta
\qquad (r\ge 0).
\]
Suppose that 
a function $\phi(u,v)$ is rationally bounded.
By definition,
there exists a real-valued $C^\infty$-function
$\lambda$ satisfying 
\ref{item:rat-a}, \ref{item:rat-b} and \ref{item:rat-c}.
We may consider $\lambda$ the function
of two variables $r,\theta$. 
We denote by $\theta_1,\dots,\theta_n\in [0,2\pi)$
the zeros of the function $\lambda(0,\theta)$.
It can be easily checked that
for each sufficiently small
$\varepsilon>0$,
there exist positive constants $\delta,C$
such that the inequality
\[
   |\phi(r \cos\theta,r \sin\theta)|<C
          \qquad (0<r<\delta)
\]
holds whenever $|e^{i\theta}-e^{i\theta_j}|>\varepsilon$
for each $j=1,\dots,n$, where $i=\sqrt{-1}$.
This is a reason why we say that $\phi$
is rationally bounded.
Moreover, if the 
function $\phi(u,v)$ is rationally continuous,
one can easily check that
\[
   \lim_{\substack{(u,v)\to (0,0)\\
   (u,v)\in \O}}
   \phi(u,v)=c
\]
holds, where $c$ is the constant 
as in \eqref{eq:psi-lambda}.
So, we say that $\phi$
is of rationally continuous.

\begin{exa}
 The functions
 \[
   \phi_1(u,v):=1+\frac{uv}{u^2-v^2},
   \qquad
   \phi_2(u,v):=1+\frac{u^2v}{u^2-v^2}
 \]
 are rationally bounded at $(0,0)$
 by setting 
 $\lambda(r,\theta):=\cos^2\theta-\sin^2\theta$.
 In this case, 
 $\lambda(0,\theta)=0$ if and only if
 $\theta=(2j+1)\pi/4$ ($j=0,1,2,3$).
 Moreover, the function
 $\phi_2$ is rationally continuous at $(0,0)$
 by setting 
 $\psi:=\lambda+r\cos^2\theta\sin \theta$.
\end{exa}
\begin{exa}
 For $a\in\R\setminus\{0\}$ and $k\geq 2$, we set
 $f(u,v):=(u,v^2,v^3+a u^k)$.
 Then its Gaussian curvature function  is given by
 \[
     K(u,v)=\frac{3ak(k-1)}{4v}\biggl(u^{k-2}+O(1)\biggr),
 \]
 where $O(1)$ is a smooth function of $u$ and $v$.
 Hence $K$ is rationally bounded (resp.\ rationally continuous) 
 at the origin (with $\lambda(r,\theta)=\sin\theta$)
 if $k\ge 3$ (resp.\ $k\geq 4$).
\end{exa}

\begin{defi}\label{def:adapted}
 An admissible coordinate system $(u,v)$ 
 (cf.\ Definition~\ref{def:admissible})
 at
 a singular point
 $p$ of the first kind of a frontal $f$
 is called {\it adapted\/}  
 if
 it is compatible with respect to
 the orientation of $\Sigma^2$,
 and the following conditions hold
 along the $u$-axis
 \begingroup
 \renewcommand{\theenumi}{(\alph{enumi})}
 \renewcommand{\labelenumi}{(\alph{enumi})}
 \begin{enumerate}
  \item\label{item:adapted:1}
       $|f_u| = 1$,
  \item\label{item:adapted:2}
       $f_v=0$, in particular, the singular set coincides with
       the $u$-axis,
  \item\label{item:adapted:3}
       $\{f_u, f_{vv},\nu\}$ is an
       orthonormal frame.
 \end{enumerate}
 \endgroup
\end{defi}
If $(U,V)$ is another adapted coordinate system at $p$,
then it holds that 
\begin{equation}\label{eq:important}
 V_{v}(p)=1.
\end{equation}
The existence of an adapted coordinate system was
shown in \cite[Lemma 3.2]{SUY}.
(In  fact, the proof of \cite[Lemma 3.2]{SUY} does not
use the assumption that $f$ is a front.)
Since
$\inner{f_{vv}}{\nu}=-\inner{f_{v}}{\nu_v}=0$
along the $u$-axis, \ref{item:adapted:3}
is equivalent to the condition
that $\inner{f_u}{f_{vv}}=\inner{f_u}{f_v}_v=0$
and $\inner{f_{vv}}{f_{vv}}
=\frac{1}{2}\inner{f_{v}}{f_{v}}_{vv}=1$
along the $u$-axis.
Thus, $(u,v)$ is adapted if and only if
\begin{equation}\label{eq:I}
  \inner{f_u}{f_u}=1,\qquad
  \inner{f_v}{f_v}=\inner{f_u}{f_v}_v=0,\qquad
  \inner{f_v}{f_v}_{vv}=2 
\end{equation}
hold along the $u$-axis.
In particular, the adapted coordinate system
can be characterized in terms of the first
fundamental form.

We fix an adapted coordinate system $(\U;u,v)$
of a frontal $f\colon{}\Sigma^2\to M^3$
such that 
$\{f_u, f_{vv},\nu\}$ is a frame compatible with respect to
the orientation of $M^3$.
Then $f_1:=f_u$ and $f_2:=f_{v}/v$ give smooth
vector fields on $\U$.
By definition, $f_2(u,0)=f_{vv}(u,0)$.
We now set
\begin{equation}\label{eq:hatG}
  \hat g_{ij}^{ }:=\inner{f_i}{f_j},\quad
  \hat h_{ij}:=-\inner{f_{i}}{\nu_j} \qquad (i,j=1,2),
\end{equation}
where $\nu_1:=\nu_u$ and $\nu_2:=\nu_v$.
By \ref{item:adapted:1} 
in Definition~\ref{def:adapted},
we have 
\begin{equation}\label{eq:g1122}
\hat g_{11}(u,0)=1.
\end{equation}
On the other hand, we have
\begin{equation}\label{eq:g12}
  \hat g_{12}(u,0)=\inner{f_u(u,0)}{f_{vv}(u,0)}=0
\end{equation}
and
\begin{equation}\label{eq:g220}
  \hat g_{22}(u,0)=
\lim_{v\to 0}\inner{f_v(u,v)/v}{f_{v}(u,v)/v}=
\inner{f_{vv}(u,0)}{f_{vv}(u,0)}=1.
\end{equation}
Then the mean curvature function $H$ of $f$
is given by
\[
   H=\frac{\hat g_{11}^{ }\hat h_{22}
       -2v \hat g_{12}^{ }\hat h_{21}+v\hat g_{22}^{ } \hat h_{11}}{2v(
     \hat g_{11}^{ }\hat g_{22}^{ }-(\hat g_{12}^{ })^2)},
\]
namely $\hat H:=vH$ is a $C^\infty$-function
of $u,v$ such that
$\hat H=\frac{1}{2}\hat h_{22} +O(v)$,
where $O(v)$ is a term
such that $O(v)/v$ is bounded near $v=0$.
Differentiating $f_v=v k$ ($k:=f_2$),
we have that
\[
   f_{vv}=k+v k_v,\quad
   f_{vvv}=2k_v+v k_{vv}.
\]
By \ref{item:adapted:3},
$\nu=f_u\times_g f_{vv}$
holds at $(u,0)$.
Then we have
\begin{equation}\label{eq:kc-h}
  \kappa_c 
    =\det_{g}(f_u,f_{vv},f_{vvv})=
                     \inner{\nu}{f_{vvv}}
    =2\inner{\nu}{k_{v}}
        =-2\inner{\nu_v}{k}=2 \hat h_{22}
\end{equation}
along the $u$-axis.
So it holds that
\begin{equation}\label{eq:hatH}
   4\hat H(u,0)=\kappa_c(u)=
                \kappa_c(p)+\kappa'_c(p)u+O(u^2),
\end{equation}
which yields the following assertion.
\begin{prop}\label{prop:mean}
 Let $p$ be a singular point of the first
 kind of a frontal $f$.
 Then  the mean curvature function $H$ is
 bounded at $p$
 if and only if  $\kappa_c$ vanishes on a neighborhood
 of $p$ in the singular set
 {\rm(}cf.\ {\rm \cite[Corollary 3.5]{SUY}}{\rm)}.
 Moreover, $H$ is rationally bounded {\rm (}resp.\ rationally continuous{\rm)}
at $p$ if and only if $\kappa_c(p)=0$ {\rm(}resp.\
 $\kappa_c(p)=\kappa'_c(p)=0${\rm)}.
\end{prop}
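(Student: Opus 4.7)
My plan is to take as starting point the expansion derived in the paragraph immediately preceding the statement: in an adapted coordinate system $(u,v)$ centered at $p$, the function $\hat H := vH$ is smooth and satisfies $\hat H(u,0)=\kappa_c(u)/4$. By Hadamard's lemma this is equivalent to
\[
   H(u,v)=\frac{\kappa_c(u)}{4v}+h(u,v),
\]
for some $C^\infty$-function $h$ defined near $(0,0)$. All three assertions will follow by analyzing this formula.

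The first equivalence is immediate. If $\kappa_c$ vanishes on $\{|u|<\delta,\,v=0\}$, then $\kappa_c(u)/v$ extends smoothly to $0$ on this neighborhood, so $H=h(u,v)$ is bounded near $p$. Conversely, if there exist $u_0$ arbitrarily close to $0$ with $\kappa_c(u_0)\ne0$, then $H(u_0,v)=\kappa_c(u_0)/(4v)+h(u_0,v)$ diverges as $v\to 0$, and $H$ is unbounded.

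For the rational statements, I would introduce polar-like coordinates $(r,\theta)$ on the real blow-up $\pi\colon \hat \Sigma^2_p \to \Sigma^2$, with $u=r\cos\theta$, $v=r\sin\theta$, and try the candidate $\lambda(r,\theta):=\sin\theta$. This is smooth on $\hat\Sigma^2_p$, and on $\pi^{-1}(p)=\{r=0\}$ it vanishes only at $\theta=0,\pi$, so conditions (a), (b) of Definition~\ref{def:rational} hold. If $\kappa_c(p)=0$, writing $\kappa_c(u)=u\tilde\kappa(u)$ with $\tilde\kappa(0)=\kappa'_c(p)$ yields
\[
   \psi(r,\theta):=\lambda\cdot H
      =\frac{\cos\theta\,\tilde\kappa(r\cos\theta)}{4}
        +\sin\theta\,h(r\cos\theta,r\sin\theta),
\]
which extends smoothly across $r=0$; this establishes rational boundedness. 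For rational continuity, evaluating at $r=0$ gives $\psi(0,\theta)=\frac{\kappa'_c(p)}{4}\cos\theta+h(0,0)\sin\theta$, and this equals $c\sin\theta$ for some constant $c$ precisely when $\kappa'_c(p)=0$ (and then $c=h(0,0)$).

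The most delicate step, and the one I expect to be the main obstacle, is the converse: to show that \emph{no} pair $(\lambda,\psi)$ can witness rational boundedness of $H$ when $\kappa_c(p)\ne 0$. My plan is to suppose for contradiction that such $\lambda,\psi$ exist. Along any ray $\theta=\theta_0$ with $\sin\theta_0\ne 0$, the relation $\lambda H=\psi$ combined with $H\sim\kappa_c(0)/(4r\sin\theta_0)$ as $r\to 0$ forces $\lambda(r,\theta_0)/r$ to stay bounded, hence $\lambda(0,\theta_0)=0$. Since this holds for every $\theta_0$ in the dense open set $\{\sin\theta_0\ne 0\}\subset\pi^{-1}(p)$, by continuity $\lambda$ vanishes identically on $\pi^{-1}(p)$, contradicting condition (b). An analogous refinement, using the $\theta$-dependence of the leading angular coefficient of $\psi|_{r=0}$, handles the rational-continuity converse and completes the proof.
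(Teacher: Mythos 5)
Your proposal is correct and follows essentially the same route as the paper: the identity $4\hat H(u,0)=\kappa_c(u)$ from the preceding paragraph, divergence of $H=\kappa_c(u)/(4v)+h(u,v)$ off the singular set for the boundedness claim, and the choice $\lambda=\sin\theta$ on the blow-up for the rational statements. The only difference is that you spell out the converse implications (that no admissible pair $(\lambda,\psi)$ can exist when $\kappa_c(p)\ne0$, resp.\ $\kappa'_c(p)\ne0$), which the paper leaves implicit; your argument for these is sound.
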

We next discuss the Gaussian curvature $K$ of $f$.
The Gaussian curvature $K$ is expressed as
\begin{equation}\label{eq:cg}
  K=K_{\ext}+c_{g}(\nu^\perp),
\end{equation}
where $K_{\ext}$ is the extrinsic Gaussian
curvature (i.e.\ the determinant of the shape operator), 
and $c_{g}(\nu^\perp)$ is the
sectional curvature of the metric $g$
with respect to the $2$-subspace perpendicular
to $\nu$ in $T_{f(p)}M^3$ (cf.\ \cite[Proposition 4.5]{KN}).
Since $c_{g}(\nu^\perp)$ is bounded at $p$, we have that
\begin{equation}\label{eq:hatK}
  K=K_{\ext}+O(1)=
     \frac{\hat h_{11}\hat h_{22}-
         v\bigl(\hat  h_{21}^{})^2}{v(\hat g_{11}\hat g_{22}-
    (\hat g_{12}^{})^2\bigr)}+O(1),
\end{equation}
where $O(1)$ is a smooth function of $u$
and $v$.

In particular, $\hat K:=v K$ is a $C^\infty$-function
of $u$, $v$.
By \ref{item:adapted:1} of Definition~\ref{def:adapted},
$u$ is the arclength parameter of the 
image of the singular curve $\hat \gamma(u)=f(u,0)$.
Then we have
\begin{equation}\label{eq:knu}
  \kappa_{\nu}(u)=\inner{f_{uu}(u,0)}{\nu(u,0)}
                 = -\inner{f_u(u,0)}{\nu_u(u,0)}
		 =\hat h_{11}(u,0).
\end{equation}
By 
\eqref{eq:g1122}, \eqref{eq:g12}, \eqref{eq:g220}, 
\eqref{eq:kc-h}, \eqref{eq:knu} and \eqref{eq:hatK},
we have that
\begin{equation}\label{eq:k-expand}
   2\hat K(u,0)=\kappa_{\Pi}(u)=\kappa_\Pi^{}(p)+
            \kappa'_\Pi(p)u+O(u^2),
\end{equation}
where
\begin{equation}\label{eq:prod}
 \kappa_{\Pi}^{}(u): =\kappa_{\nu}(u)\kappa_c(u),\qquad
 \kappa'_\Pi(p):=\kappa'_\nu(p)\kappa_c(p)+\kappa_\nu(p)\kappa'_c(p).
\end{equation}
We call $\kappa_\Pi^{}$ and $\kappa'_\Pi$ the {\it product curvature\/}
and the {\it derivate product curvature}, respectively.
By \ref{item:adapted:1} of Definition \ref{def:adapted} and 
\eqref{eq:important},
$\dy\lim_{v\to 0}vK(u,v)$ does not depend on the choice of
adapted coordinate system.
Since the adapted coordinate system
can be characterized in terms of the first fundamental form,
we get the following:

\begin{thm}\label{thm:main1}
Let $f$ be a frontal and $p$ a singular point
 of the first kind.
Then  $\kappa_\Pi^{}$ and $\kappa'_\Pi$
are both intrinsic invariants%
\footnote{
 This might be considered as a variant of Gauss' Theorema Egregium.
}.
 Moreover, the Gaussian curvature
 $K$ is rationally bounded 
 {\rm(}resp.\ rationally continuous{\rm)}
 at $p$
 if and only if
 $\kappa_\Pi^{}=0$ {\rm(}resp.\  
$\kappa_\Pi^{}=\kappa'_\Pi=0${\rm)}
 holds at $p$.
 Furthermore, $K$ is bounded on a neighborhood $\U$
 of $p$ if and only if $\kappa_\Pi^{}=0$ along
 the singular curve in $\U$.
\end{thm}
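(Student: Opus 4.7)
The plan is to extract the entire theorem from the key formula \eqref{eq:k-expand}, namely $2\hat K(u,0)=\kappa_\Pi(p)+\kappa'_\Pi(p)\,u+O(u^2)$ with $\hat K:=vK$, combined with the fact that the conditions \eqref{eq:I} characterizing an adapted chart depend only on the induced metric.

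For the intrinsic invariance of $|\kappa_\Pi|$ and $|\kappa'_\Pi|$, I would start from the classical Theorema Egregium: the Gaussian curvature $K$ is intrinsic on the regular set, and by \eqref{eq:hatK} the product $\hat K=vK$ extends smoothly across the singular curve in any adapted chart. Since \eqref{eq:I} uses only the first fundamental form, the smooth extension $\hat K$ and its restriction to $\{v=0\}$ depend only on the induced metric up to the remaining coordinate freedom. Given a second adapted chart $(U,V)$ at $p$, \eqref{eq:important} gives $V_v(p)=\pm 1$, and condition \ref{item:adapted:1} makes both $u$ and $U$ arclength parameters of the same singular curve, so $U=\pm u$. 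Comparing $vK$ with $VK$ via the chain rule and restricting to the singular set shows that $\hat K(u,0)$ is determined up to the sign ambiguities $v\mapsto -v$ and $u\mapsto -u$; therefore $|\kappa_\Pi|=2|\hat K(u,0)|$ and $|\kappa'_\Pi(p)|=2|\hat K_u(0,0)|$ are intrinsic.

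For the rational-boundedness and rational-continuity criteria, I would pass to the polar blow-up $\hat\Sigma^2_p$ with coordinates $(r,\theta)$, $u=r\cos\theta$, $v=r\sin\theta$, and take $\lambda(r,\theta):=\sin\theta$. This $\lambda$ is smooth and vanishes on the exceptional fiber $\{r=0\}$ only at the two points $\theta=0,\pi$, so (a) and (b) of Definition \ref{def:rational} hold. Setting
\[
 \psi:=\lambda\cdot(K\circ\pi)=\frac{\hat K(r\cos\theta,r\sin\theta)}{r}
\]
and Taylor-expanding $\hat K$ at the origin gives $\psi=\hat K(0,0)/r+\hat K_u(0,0)\cos\theta+\hat K_v(0,0)\sin\theta+O(r)$. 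Hence $\psi$ extends smoothly across $\{r=0\}$ if and only if $\hat K(0,0)=0$, i.e.\ $\kappa_\Pi(p)=0$, proving the rational-boundedness criterion; for rational continuity the extension must equal $c\sin\theta$ on the exceptional fiber, which further forces $\hat K_u(0,0)=0$, i.e.\ $\kappa'_\Pi(p)=0$, with $c=\hat K_v(0,0)$. The final assertion on ordinary boundedness is immediate: $K=\hat K/v$ is bounded on a neighborhood of $p$ if and only if the smooth function $\hat K$ vanishes on the whole singular curve $\{v=0\}$, which by \eqref{eq:k-expand} means $\kappa_\Pi\equiv 0$ along that curve.

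The main obstacle will be the intrinsicality step. Although Theorema Egregium gives intrinsicality of $K$ on the regular locus, one must carefully propagate the two sign freedoms $v\mapsto -v$ and $U=\pm u$ through the singular limit $v\to 0$ and confirm that no hidden second-fundamental-form contribution survives in $\hat K(u,0)$. The rational-boundedness and rational-continuity parts are conceptually straightforward once the smoothness of $\hat K$ is in hand, but I would still need to verify that the criterion obtained with the particular choice $\lambda=\sin\theta$ is in fact the same as with any other admissible $\lambda$ in Definition \ref{def:rational}.
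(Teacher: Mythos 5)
Your proposal follows essentially the same route as the paper: the intrinsicality of $|\kappa_\Pi|$ and $|\kappa'_\Pi|$ is deduced from the fact that adapted coordinates are characterized by the first fundamental form via \eqref{eq:I} together with the sign ambiguity \eqref{eq:important}, the rational boundedness and continuity criteria come from the choice $\lambda=\sin\theta$ applied to $\hat K=vK$ exactly as in the paper's proof, and the final boundedness statement is obtained by dividing the smooth function $\hat K$ (vanishing on $\{v=0\}$) by $v$. The two verification points you flag at the end (propagating the sign freedoms through $v\to 0$, and independence of the criterion from the choice of $\lambda$) are likewise left implicit in the paper, so no essential idea is missing.
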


\begin{Proof}
 The rational boundedness
 (resp.\ rational continuity)
 of $K$ can be proved by setting $\lambda=\sin \theta$,
 since $\hat K=v K$ and $v=r \sin \theta$.
 The last assertion (i.e.\ the boundedness of $K$ on $\U$)
 is proved as follows:
 If $K$ is bounded on a neighborhood $\U$
 of $p$, then $\kappa_{\Pi}$ must vanish
 identically because of the identity
 $2\hat K(u,0)=\kappa_{\Pi}(u)$ (cf.\ \eqref{eq:k-expand}).
 On the other hand, we suppose $\kappa_{\Pi}(u)=0$
 along the $u$-axis.
 Then by \eqref{eq:k-expand}, $\hat K(u,0)=0$ and hence
 there exists a $C^\infty$-function $\phi(u,v)$
 defined on a neighborhood $p$ such that 
 \[
   (v K(u,v)=)\hat K(u,v)=v \phi(u,v).
 \]
 Thus the identity $K(u,v)=\phi(u,v)$
 holds whenever $v\ne 0$.
 This implies that the Gaussian curvature
 is bounded.
\end{Proof}

\begin{rem}
 In \cite{SUY}, a necessary and sufficient condition
 of the boundedness of Gaussian curvature at
 non-degenerate front singularities is given.
 However, for singular points on a surface which 
 is frontal but not front,
 the criterion in  \cite{SUY} does not work.
 In this sense,
 the last statement of Theorem \ref{thm:main1}
 is a generalization of \cite[Theorem 3.1]{SUY}.
\end{rem}

By \cite[Corollary 3.5]{SUY}, $\kappa_c(p)\ne 0$
holds if $p$ is a cuspidal edge.
In fact, the following assertion holds.

\begin{prop}
\label{lem:kappac-psiccr}
 Let $f$ be a frontal and $p$ a non-degenerate
 singular point  of the first kind.
 Then $f$ is a front at $p$ if and only if
 $\kappa_c(p)\ne 0$. 
\end{prop}

\begin{Proof}
 On an adapted coordinate system,
 $\kappa_c=\det_g(f_u,f_{vv},f_{vvv})$
 holds at $(u,0)$.
 On the other hand,
 since $f_v(u,0)=0$ holds, $f_{uv}(u,0)=0$.
 By using the formula
 \[
   \det_g(\vect x,\vect a\times \vect x,\vect b\times \vect x)
       =|\vect x|^2 \det_g(\vect x,\vect a,\vect b)
      \qquad (\vect a,\vect b,\vect x\in T_{o}M^3,\,\,o\in M^3),
 \]
 it holds that
 \begin{align*}
   \psi_{\ccr}(u)&=
\det_g(f_u,\nu,\nu_v)
=
\left.    \det_g(f_u,f_u\times_g f_{vv},f_u\times_g f_{vvv})\right|_{(u,v)=(u,0)}\\
 &=
    \left. \det_g(f_u,f_{vv},f_{vvv})\right|_{(u,v)=(u,0)}.
 \end{align*}
 Hence the assertion follows from Lemma \ref{lem:psiccr}.
\end{Proof}
Proposition \ref{lem:kappac-psiccr} yields the following:
\begin{cor}\label{cor:cusp}
 Let $p$ be a cuspidal edge.
 Then $K$ is rationally bounded
 {\rm(}resp.\ rationally continuous{\rm)}
 at $p$
 if and only if
 $\kappa_\nu=0$ {\rm(}resp.\
 $\kappa_\nu=\kappa'_\nu=0${\rm)}
 holds at $p$.
\end{cor}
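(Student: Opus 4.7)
The proof proposal is essentially to reduce the statement to Theorem \ref{thm:main1} by showing that at a cuspidal edge $p$ the cuspidal curvature $\kappa_c(p)$ is nonzero, so that the product structure $\kappa_\Pi=\kappa_\nu \kappa_c$ lets one divide out $\kappa_c$ and get a condition purely in terms of $\kappa_\nu$ and $\kappa_\nu'$.

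First I would recall that, by \ref{item:Fact1} of Fact~\ref{fact:criteria}, a cuspidal edge is a non-degenerate singular point of the first kind at which $f$ is a front. Hence Lemma~\ref{lem:psiccr} applies and gives $\psi_{\ccr}(p)\neq 0$. Lemma~\ref{lem:kappac-psiccr} tells us that along the singular set $\kappa_c$ is proportional to $\psi_{\ccr}$, so in particular $\kappa_c(p)\neq 0$.

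Next, I would invoke Theorem~\ref{thm:main1}: $K$ is rationally bounded at $p$ if and only if $\kappa_\Pi(p)=0$, and rationally continuous at $p$ if and only if $\kappa_\Pi(p)=\kappa'_\Pi(p)=0$. Using the formulas
\[
 \kappa_\Pi(p)=\kappa_\nu(p)\kappa_c(p),\qquad
 \kappa'_\Pi(p)=\kappa'_\nu(p)\kappa_c(p)+\kappa_\nu(p)\kappa'_c(p)
\]
from \eqref{eq:prod} together with $\kappa_c(p)\neq 0$, the first equation $\kappa_\Pi(p)=0$ is equivalent to $\kappa_\nu(p)=0$. Given that $\kappa_\nu(p)=0$, the second equation reduces to $\kappa'_\nu(p)\kappa_c(p)=0$, which (again because $\kappa_c(p)\neq 0$) is equivalent to $\kappa'_\nu(p)=0$. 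This yields both equivalences simultaneously.

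There is no real obstacle here beyond correctly citing the supporting lemmas; the content of the corollary is that the nonvanishing of $\kappa_c$ at a cuspidal edge makes the product curvature conditions in Theorem~\ref{thm:main1} collapse into conditions on $\kappa_\nu$ alone. The only subtlety one should double-check is that the derivatives are taken along the singular curve with respect to the arclength parameter of $\hat\gamma$, so that $\kappa'_\nu$ and $\kappa'_c$ here are the same quantities used in \eqref{eq:prod}; this is guaranteed by the adapted coordinate setup of Definition~\ref{def:adapted} in which $u$ is the arclength parameter of $\hat\gamma$.
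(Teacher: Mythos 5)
Your proposal is correct and follows essentially the same route as the paper: the paper derives Corollary~\ref{cor:cusp} precisely by noting that $\kappa_c(p)\ne 0$ at a cuspidal edge (via Lemmas~\ref{lem:psiccr} and~\ref{lem:kappac-psiccr}, or equivalently \cite[Corollary 3.5]{SUY}) and then dividing $\kappa_c$ out of the conditions $\kappa_\Pi=0$ and $\kappa'_\Pi=0$ in Theorem~\ref{thm:main1} using \eqref{eq:prod}. Your remark about the arclength parametrization is a sensible sanity check but introduces no new content.
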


\begin{cor}\label{cor:new-cusps}
 Let $p$ be a singular point of the
 first kind of a frontal. If the mean curvature
 $H$ is bounded {\rm(}resp.\ rationally bounded,
 rationally continuous{\rm)} at $p$,
 then the Gaussian curvature $K$ is bounded
 {\rm(}resp.\ rationally bounded, rationally continuous{\rm)}
 at $p$.
\end{cor}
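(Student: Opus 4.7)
The plan is to reduce the corollary directly to the already established characterizations of boundedness (and rational boundedness/continuity) of $H$ and $K$ in terms of the invariants $\kappa_c$, $\kappa_\nu$ and their derivatives, together with the product formula $\kappa_\Pi = \kappa_\nu\,\kappa_c$ from \eqref{eq:prod}. Since everything the corollary claims is already encoded in Proposition~\ref{prop:mean} (for $H$) and Theorem~\ref{thm:main1} (for $K$), the work is purely algebraic once these two results are invoked.

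For the first implication I would argue as follows: assume $H$ is bounded on a neighborhood of $p$. By Proposition~\ref{prop:mean} this forces $\kappa_c$ to vanish identically on a neighborhood of $p$ in the singular curve. Then the factorization $\kappa_\Pi = \kappa_\nu\,\kappa_c$ makes $\kappa_\Pi$ vanish on that same piece of the singular curve, and the last statement of Theorem~\ref{thm:main1} delivers the boundedness of $K$ near $p$.

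For the pointwise versions, assume first that $H$ is rationally bounded at $p$. Proposition~\ref{prop:mean} yields $\kappa_c(p)=0$, so $\kappa_\Pi(p) = \kappa_\nu(p)\kappa_c(p) = 0$, and Theorem~\ref{thm:main1} gives rational boundedness of $K$ at $p$. Finally, if $H$ is rationally continuous at $p$, Proposition~\ref{prop:mean} gives $\kappa_c(p) = \kappa'_c(p) = 0$; substituting into the formula
\[
\kappa'_\Pi(p) = \kappa'_\nu(p)\kappa_c(p) + \kappa_\nu(p)\kappa'_c(p)
\]
from \eqref{eq:prod} makes both terms vanish, so $\kappa_\Pi(p) = \kappa'_\Pi(p) = 0$, and Theorem~\ref{thm:main1} again concludes.

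There is essentially no obstacle here: the corollary is a direct consequence of the product structure of $\kappa_\Pi$ combined with the parallel necessary-and-sufficient conditions previously obtained for the two curvature functions. The one thing to be slightly careful about is distinguishing the neighborhood version (boundedness on an open set, which requires $\kappa_c$ to vanish along the whole singular curve near $p$, not just at $p$) from the pointwise rational versions (which only constrain the values of $\kappa_c$ and $\kappa'_c$ at $p$); this matches up cleanly with the three cases in Theorem~\ref{thm:main1}, so no further analytic work is needed.
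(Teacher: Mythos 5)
Your proof is correct and follows exactly the same route as the paper, which simply cites Proposition~\ref{prop:mean}, the identity \eqref{eq:prod}, and Theorem~\ref{thm:main1}; you have merely spelled out the three cases that the paper leaves implicit. The case analysis (neighborhood boundedness versus the pointwise rational conditions) is handled correctly and matches the statement of Theorem~\ref{thm:main1} in each instance.
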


\begin{Proof}
 The assertion follows from
 Proposition \ref{prop:mean}, 
 the identity \eqref{eq:prod}
 and Theorem \ref{thm:main1}.
\end{Proof}

A singular point $p\in \Sigma^2$ of a map $f:\Sigma^2\to M^3$
is a {\em $5/2$-cuspidal edge}\/
if the map germ $f$ at $p$ is right-left equivalent to
$(u,v)\mapsto(u,v^2,v^5)$ at the origin.
The map $f$ is a frontal on a neighborhood of the $5/2$-cuspidal edge
$p$, but not a front at $p$.

\begin{cor}\label{cor:52cusp}
 Let $p$ be a $5/2$-cuspidal edge.
 Then the mean curvature $H$ and
 the Gaussian curvature $K$ are both bounded 
 near $p$.
\end{cor}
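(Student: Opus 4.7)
The plan is to reduce the assertion to showing that the cuspidal curvature $\kappa_c$ vanishes identically along the singular curve through $p$. Once this is established, the last statement of Proposition~\ref{prop:mean} gives the boundedness of $H$ on a neighborhood of $p$, while the final assertion of Theorem~\ref{thm:main1} applied to $\kappa_\Pi=\kappa_\nu\kappa_c$ yields the boundedness of $K$ on a neighborhood of $p$.

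First I would verify that $p$ is a non-degenerate singular point of the first kind. In the normal form $(u,v)\mapsto(u,v^2,v^5)$, one has $f_u(0,0)=(1,0,0)$ and $f_v(0,0)=0$, so the null direction $\eta=\partial_v$ is linearly independent of the singular-curve direction $\partial_u$, which is the characterization of the first kind via Fact~\ref{fact:criteria}. Since these properties are preserved under right-left equivalence, they pass to the original map $f$ at $p$.

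The crucial step is to show that $f$ fails to be a front at every singular point on the singular curve through $p$ in a neighborhood of $p$, not merely at $p$ itself. In the normal form the singular set is $\{v=0\}$, and by translation invariance in $u$ the map germ at every $(u_0,0)$ is again $(u,v^2,v^5)$, hence a $5/2$-cuspidal edge. Because the property of being a front is right-left invariant (it can be phrased as the existence of a Legendrian lift, which does not require a metric), the analogous statement transfers to the original $f$: every singular point on the singular curve near $p$ is a $5/2$-cuspidal edge, and so $f$ is not a front at any such point. Lemma~\ref{lem:psiccr} then forces $\psi_{\ccr}\equiv 0$ along the singular curve near $p$, and Lemma~\ref{lem:kappac-psiccr} yields $\kappa_c\equiv 0$ there, completing the reduction.

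The main obstacle is this propagation argument: one must confirm carefully that the right-left equivalence realized at $p$ delivers $5/2$-cuspidal edges at all nearby singular points, because both the criterion in Proposition~\ref{prop:mean} for $H$ and the criterion in Theorem~\ref{thm:main1} for $K$ require vanishing along an entire arc of singular points, not only at the distinguished point $p$. Once the non-front property is known to persist along the whole singular curve, the rest of the argument is an immediate application of the two cited invariants.
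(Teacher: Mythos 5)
Your argument is correct and follows essentially the same route as the paper: both proofs reduce the claim to showing that every singular point near $p$ is a $5/2$-cuspidal edge (hence not a front singularity), invoke Lemmas \ref{lem:psiccr} and \ref{lem:kappac-psiccr} to conclude $\kappa_c\equiv 0$ along the singular curve, and then apply Proposition \ref{prop:mean} for $H$ and the vanishing of $\kappa_\Pi=\kappa_\nu\kappa_c$ for $K$ (the paper routes the latter through Corollary \ref{cor:new-cusps}, you cite Theorem \ref{thm:main1} directly, which is the same content). Your explicit propagation argument via translation invariance of the normal form and the right-left invariance of the front property is a welcome elaboration of a step the paper states without detail; note only that the boundedness criterion for $H$ is the first, not the last, assertion of Proposition \ref{prop:mean}.
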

\begin{Proof}
 Since singular points on a neighborhood of $p$ consists 
 of $5/2$-cuspidal edges,
 all singular points are not front singularities.
 Then the cuspidal curvature $\kappa_c$
 vanishes identically on the singular set 
 because of Lemma~\ref{lem:psiccr} 
 and the proof of
 Proposition~\ref{lem:kappac-psiccr}.
 Thus the boundedness of $H$ and $K$ follow
 from Proposition \ref{prop:mean} and
 Corollary \ref{cor:new-cusps}.
\end{Proof}

\begin{exa}
 A useful criterion for $5/2$-cuspidal edge
 singularities is given in \cite{HKS}.
 Applying this,
 one can check that
 the map $f\colon{}\R^2\to\R^3$ 
 defined by
 \[
    f(u,v):=(u,a u^2+v^2,c u^2+b v^5)
 \]
 has $5/2$-cuspidal edge singularities
 along the $u$-axis.
 The unit normal vector field
 is given by
 \[
  \nu:=
   \frac{1}{\sqrt{\left(4 c u-10 a b u v^3\right)^2+25 b^2 v^6+4}}
   \left(10 a b u v^3-4 c u,-5 b v^3,2\right).
 \]
 The  limiting normal curvature
 are given by
 \begin{align*}
  \kappa_{\nu}(u)&
  =\frac{2 c}{\sqrt{4 c^2 u^2+1} \left(4 a^2 u^2+4 c^2 u^2+1\right)}.
 \end{align*}
 On the other hand, the Gaussian curvature is given by
 \[
   K=-\frac{60 b v \left(5 a b v^3-2 c\right)}
   {\left(25 b^2 v^6 \left(4 a^2 u^2+1\right)
             -80 a b c u^2 v^3+16 c^2 u^2+4\right)^2}
 \]
 which is bounded at the singular set as shown in
 Corollary~\ref{cor:52cusp}.
 In the case of cuspidal edges, the Gaussian curvature
 $K$ is bounded if and only if
 $\kappa_\nu$ vanishes identically.
 However, for this $f$ with $c\ne 0$,
 $K$ is bounded even if 
 $\kappa_\nu\ne 0$.
\end{exa}
\begin{rem}
 In \cite{SUY}, it was pointed out that
 the Gaussian curvature of a front $f$
 takes opposite signs
 on the left and right hand sides of
 the singular curve if $\kappa_\nu\ne 0$.
 This follows immediately from
 the formula 
 \[
   v K=\frac{1}{2}\kappa_\Pi^{}+O(\sqrt{u^2+v^2}),
 \]
 where $O(\sqrt{u^2+v^2})$ is 
 the term such that 
 $O(\sqrt{u^2+v^2})/\sqrt{u^2+v^2}$
 is bounded near $(u,v)=(0,0)$.
\end{rem}

By regarding that the tensor fields $\det_g$ and
$\inner{~}{~}$ are parallel with respect
to the Levi-Civita connection of $(M^3,g)$,
Proposition \ref{lem:kappac-psiccr} and
Fact \ref{fact:criteria} \ref{item:Fact3}
yield that
$\kappa_c(p)=0$ and $\kappa'_c(p)\ne 0$ hold
if $p$ is a cuspidal cross cap.
Then by \eqref{eq:k-expand} and \eqref{eq:prod}, we have
\begin{cor}
 Let $p$ be a cuspidal cross cap.
 Then $K$ is rationally bounded at $p$. Moreover it
 is rationally continuous
 at $p$ if and only if
 $\kappa_\nu(p)= 0$.
\end{cor}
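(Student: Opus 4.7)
The plan is to reduce this corollary directly to Theorem \ref{thm:main1} using the characterization of cuspidal cross caps in terms of $\psi_{\ccr}$. First I would record the two key vanishing/non-vanishing facts at a cuspidal cross cap $p$: by Fact \ref{fact:criteria} \ref{item:Fact3}, $p$ is a singular point of the first kind with $\psi_{\ccr}(p) = 0$ and $\psi'_{\ccr}(p) \ne 0$, and by Lemma \ref{lem:kappac-psiccr}, $\kappa_c$ is proportional to $\psi_{\ccr}$ along the singular curve. Consequently $\kappa_c(p) = 0$ while $\kappa'_c(p) \ne 0$.

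Next I would feed these into the formulas \eqref{eq:prod} for the product curvature and its derivative. From $\kappa_\Pi = \kappa_\nu \kappa_c$ we immediately get $\kappa_\Pi(p) = 0$ regardless of $\kappa_\nu(p)$. Theorem \ref{thm:main1} then gives the first assertion at once: since $\kappa_\Pi(p) = 0$, the Gaussian curvature $K$ is rationally bounded at $p$.

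For the second assertion, I would use the rational continuity criterion from Theorem \ref{thm:main1}, namely that $K$ is rationally continuous at $p$ if and only if $\kappa_\Pi(p) = \kappa'_\Pi(p) = 0$. We already have $\kappa_\Pi(p) = 0$, and the derivate formula
\[
 \kappa'_\Pi(p) = \kappa'_\nu(p)\kappa_c(p) + \kappa_\nu(p)\kappa'_c(p)
\]
simplifies, using $\kappa_c(p) = 0$, to $\kappa'_\Pi(p) = \kappa_\nu(p)\kappa'_c(p)$. Since $\kappa'_c(p) \ne 0$, this vanishes precisely when $\kappa_\nu(p) = 0$, yielding the stated equivalence.

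There is essentially no obstacle here; the corollary is a clean consequence of the machinery already developed, and the only verification needed is that the hypotheses of Theorem \ref{thm:main1} apply (which they do, since a cuspidal cross cap is a non-degenerate singular point of the first kind of a frontal). The only minor point worth stating explicitly is that Lemma \ref{lem:kappac-psiccr} gives proportionality on the singular set with a non-vanishing proportionality factor near $p$, so that the conditions on $\kappa_c$ and $\kappa'_c$ genuinely transfer from the conditions on $\psi_{\ccr}$ and $\psi'_{\ccr}$.
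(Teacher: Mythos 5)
Your proposal is correct and follows essentially the same route as the paper: the authors likewise combine Fact \ref{fact:criteria} \ref{item:Fact3} with Lemma \ref{lem:kappac-psiccr} to get $\kappa_c(p)=0$ and $\kappa'_c(p)\ne 0$, and then conclude via \eqref{eq:k-expand}, \eqref{eq:prod} and Theorem \ref{thm:main1}. Your closing remark about the non-vanishing proportionality factor (and, implicitly, the harmless reparametrization between the singular-curve parameter and arclength) is the right point to check and causes no difficulty.
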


The Gaussian curvature of the
following cuspidal cross cap
is rationally bounded
but $\kappa_\nu\ne0$ holds.
\begin{exa}\label{ex:crcp}
 Let us consider a map $f\colon{}\R^2\to\R^3$
 defined by
 \[
   f(u,v):=
       (u, v^2, u v^3 + u^2).
 \]
 Then
 \[
     \nu(u,v):=\frac{1}{\sqrt{4+4 (2 u+v^3)^2+9 u^2 v^2}}
               \Big(-2 (2 u+v^3), -3uv, 2\Big)
 \]
 gives a unit normal vector field.
 By a direct calculation, one can see that
 $(0,0)$ is a cuspidal cross cap singularity
 with
 $\kappa_\nu(0,0)=2$,
 and the Gaussian curvature $K$ is
 \[
    K(u,v)=
     \frac{12(2 u - 3 v^3)}
           {v \Big(4 + 16 u^2+9u^2v^2+16uv^3+4 v^6\Big)^2}.
 \]
 This is rationally bounded at $(0,0)$.
\end{exa}

\begin{rem}
 If the ambient space is $\R^3$,
 we can take the normal form 
 at a cuspidal edge  $p=(0,0)$
 as in Remark \ref{rem:MS}.
 Then the $C^\infty$-function $\hat K:=v K(u,v)$ satisfies
 \[
  2 \hat K=\kappa_\Pi^{}+
  \kappa'_\Pi u
   -
  v\left(2(b_2)^2+\frac{\kappa_s(\kappa_c)^2}2
    -\frac{8(b_3)_v\kappa_\nu}3
  \right)
  +O(u^2+v^2),
 \]
 where $O(u^2+v^2)$ is 
 the term such that $O(u^2+v^2)/(u^2+v^2)$
 is bounded near $p=(0,0)$.
 If $K\ge 0$ near $p$,
 then $\kappa_\nu=0$ and thus
 \[
    0\le 4 K(0,0)=-4(b_2)^2-\kappa_s(\kappa_c)^2
 \]
 holds. So we have  $\kappa_s\le 0$, which
 reproves the second assertion of \cite[Theorem 3.1]{SUY}
 in the special case that the ambient space is $\R^3$.
\end{rem}

It is classically known that
regular surfaces in $\R^3$ admit
non-trivial isometric deformations, and
it might be interesting to consider
 the existence of such deformations
at cuspidal edge singularities:
Let $\xi(s)$ ($|s|<1$) be a regular
curve on the unit sphere
$S^2(\subset \R^3)$
and let $a(s)$ ($|s|<1$)  be
with the arclength parameter $s$,
a positive valued function.
Then
\[
  f_{a,\xi}(u,v):=
   v \xi(u)+\int_0^u a(s)\xi(s)\,ds
\]
gives a developable surface 
with
singularities on the $u$-axis.
Moreover, \eqref{eq:kc}
yields
$\kappa_c=-2 \mu_g/\sqrt{a}$,
where $\mu_g$ is the geodesic curvature of the spherical curve
$\xi$.
As pointed out in \cite{HHNUY},
moving $\xi$ so that $\mu_g$ varies,
we get an isometric deformation of $f_{a,\xi}$
so that $\kappa_c$ changes.
Thus $\kappa_c$ is not an intrinsic invariant.
It should be remarked that
we cannot conclude that $\kappa_\nu$
is an extrinsic invariant
since
$f_{a,\xi}$ preserves
the limiting normal curvature $\kappa_\nu$
to be identically zero.
However,
using the fact that the product curvature
$\kappa_\Pi^{}$ is intrinsic 
(cf.\ Theorem \ref{thm:main1}),
the existence of isometric deformations
of cuspidal edges
in $\R^3$ which change $\kappa_\nu$
is shown in \cite{NUY}.
See also Teramoto \cite{tera} 
for other geometric properties of cuspidal 
edges and their parallel surfaces. 

\section{Singularities of the second kind.}
\label{sec:general}

We fix a frontal $f:\Sigma^2\to (M^3,g)$ 
in an oriented Riemannian $3$-manifold.
We consider
singular points of the second kind
of $f$.
Typical such singular points are swallowtails.
In this section, we newly define 
\lq normalized singular curvature\rq\ 
$\mu_c$ and \lq normalized
product curvature\rq\ $\mu^{}_{\Pi}$ 
at singular points of the second kind.
Also, 
\lq limiting singular curvature\rq\ $\tau_s$ 
and \lq limiting
cuspidal curvature\rq\ $\tau_c$ 
are defined for
swallowtail singularities.

\begin{defi}\label{def:adapted2}
 Let $p\in \Sigma^2$ be a  singular point
 of the second kind.
 A local coordinate system $(\U;u,v)$ at $p=(0,0)$
 is called {\it adapted\/} at $p$ if
 it is compatible with respect to
 the orientation of $\Sigma^2$,
 and the following conditions hold:
 \begingroup
 \renewcommand{\theenumi}{(\roman{enumi})}
 \renewcommand{\labelenumi}{(\roman{enumi})}
 \begin{enumerate}
 \item\label{item:adapted2:1}
      $f_u(p)=0$,
 \item\label{item:adapted2:2}
      the singular set of $f$ on $\U$
      coincides with the $u$-axis,
 \item\label{item:adapted2:3}
      $|f_v(p)|=1$.
 \end{enumerate}
 \endgroup
\end{defi}

The existence of adapted coordinate system
can be proved easily.
Let $(U,V)$ be another adapted coordinate system,
then the condition $f_u(p)=0$ and
\ref{item:adapted2:3} yield that
\begin{equation}\label{eq:Vv}
 V_v(p)=1 .
\end{equation}

We fix an adapted coordinate system $(u,v)$ at $p=(0,0)$.
Then one can take a null vector field along the $u$-axis in the form
\begin{equation}\label{eq:second-eta}
\eta=\partial_u+\varepsilon(u) \partial_v\qquad \bigl(\varepsilon(0)=0\bigr)
\end{equation}
for a $C^{\infty}$-function $\varepsilon=\varepsilon(u)$.
We can extend this $\eta$ as a vector field defined on
a neighborhood of the origin.
Since $f_\eta=f_u+\varepsilon(u) f_v$ vanishes on
the $u$-axis, there exists a $C^\infty$-function
$\psi$
such that
\begin{equation}\label{eq:psi-def}
   f_{\eta}(u,v) = f_u(u,v) + \varepsilon(u)f_v(u,v) = v \psi(u,v).
\end{equation}
Differentiating this by $v$, we have that
$\psi(0,0)=f_{uv}(0,0)$.
Since $\lambda(u,0)=0$,
the non-degeneracy of $p$ yields that
\[
  0\ne \lambda_v=\det_g(f_{uv},f_v,\nu)
           =\det_g(\psi,f_v,\nu)
\]
at $p$, which implies that 
$\psi(0,0)$ and $f_v(0,0)$ are linearly independent.
We now set
\[
   g_{ij}^{ }:=\inner{f_{u_i}}{f_{u_j}},\quad
   h_{ij}:=-\inner{f_{u_i}}{\nu_{u_j}} \qquad (i,j=1,2),
\]
where $u_1=u$ and $u_2=v$.
Since $f_u=v\psi -\varepsilon f_v$ as in \eqref{eq:psi-def},
it holds that
\begin{equation}\label{eq:gij}
\begin{aligned}
   g_{11}^{ }&=\inner{v\psi-\varepsilon f_v}{v\psi-\varepsilon f_v}
         =v^2|\psi|^2 -2 v \varepsilon\inner{\psi}{f_v}+
           \varepsilon^2 |f_v|^2,\\
   g_{12}^{ }&=\inner{v\psi-\varepsilon f_v}{f_v}
            = v\inner{\psi}{f_v}-\varepsilon|f_v|^2,\\
   g_{22} &=|f_v|^2,
\end{aligned}
\end{equation}
which yields that
\begin{equation}\label{eq:g-det}
  g_{11}^{ }g_{22}^{ }-(g_{12}^{ })^2=v^2(|\psi|^2|f_v|^2-\inner{\psi}{f_v}^2)=
     v^2|\psi\times_g f_v|^2.
\end{equation}
On the other hand,
\[
    \inner{f_u+\varepsilon f_v}{\nu_{u_i}}
          =\inner{f_\eta}{\nu_{u_i}}
          =v\inner{\psi}{\nu_{u_i}} \quad (i=1,2)
\]
holds, namely, we have
\[
     -h_{11}-\varepsilon h_{12}=v\inner{\psi}{\nu_u}, \quad
     -h_{12}-\varepsilon h_{22}=v\inner{\psi}{\nu_v}.
\]
So we have that
\[
 h_{12}=-v\inner{\psi}{\nu_v}-\varepsilon h_{22},\qquad
 h_{11}=-v\inner{\psi}{\nu_u}+v\varepsilon
     \inner{\psi}{\nu_v}+\varepsilon^2 h_{22},
\]
and then
\begin{align*}
    g_{11}h_{22}-2 g_{12}h_{12}&+ g_{22}h_{11}\\
        &= -v|f_v|^2\inner{\psi}{\nu_u+\varepsilon\nu_v}
            +v^2\bigl(|\psi|^2h_{22}+
             2\inner{\psi}{f_v}\inner{\psi}{\nu_v}\bigr)\\
        &= -v|f_v|^2\inner{\psi}{\nu_{\eta}}
            +v^2\bigl(|\psi|^2h_{22}+
             2\inner{\psi}{f_v}\inner{\psi}{\nu_v}\bigr).
\end{align*}
So by \eqref{eq:g-det},
the mean curvature $H$ of $f$ is expressed as
\begin{align*}
 2v H
  &=v\left(\frac{
        g_{11}^{ }h_{22}-2g_{12}^{ }h_{12}+g_{22}^{ }h_{11}}{
        g_{11}^{ }g_{22}^{ }-(g_{12}^{ })^2}\right)\\
  &=\frac{-|f_v|^2\inner{\psi}{\nu_\eta}+
      v\bigl(|\psi|^2 h_{22}
       +2 \inner{\psi}{f_v}\inner{\psi}{\nu_v}\bigr)}
        {|\psi\times_g f_v|^2}.
\end{align*}
Then
$\hat H:=vH$ is a $C^\infty$-function of $u$, $v$.
We define two constants $\mu_c(p)$ and $\mu'_c(p)$
by the expansion 
\begin{equation}\label{eq:hathexpand}
  2\hat H(u,0)
  =\frac{-|f_v(u,0)|^2\inner{\psi(u,0)}{\nu_\eta(u,0)}}
        {|\psi(u,0)\times_g f_v(u,0)|^2}
  =\mu_c(p)+\mu'_c(p) u +O(u^2).
\end{equation}
Thus, $H$ is rationally bounded (resp.\
rationally continuous) at $p$ if and only if $\mu_c(p)=0$ (resp.\
$\mu_c(p)=\mu'_c(p)=0$).
By \eqref{eq:Vv} and the fact $\hat H=vH$, 
$\mu_c(p)$ is a geometric
invariant called {\it normalized cuspidal curvature}. 
However, $\mu'_c(p)$ does depend on the choice of the
parameter $u$. Since $|f_v(0,0)|=1$, the following formula holds
\begin{equation}\label{eq:kH}
 \mu_c(p)=
  \frac{-\inner{\psi(0,0)}{\nu_u(0,0)}}{|\psi(0,0)\times_g f_v(0,0)|^2}
  =
  \frac{-\inner{f_{uv}(0,0)}{\nu_u(0,0)}}{|f_{uv}(0,0)\times_g f_v(0,0)|^2}.
\end{equation}
The right-hand side of \eqref{eq:kH} is
independent of the choice of an adapted coordinate system.
Moreover, if we take a positively oriented local 
coordinate system
$(u,v)$ satisfying only \ref{item:adapted2:1} 
of Definition \ref{def:adapted2},
then we can write
\begin{equation}\label{eq:kHinv}
 \mu_c(p)=
  \frac{-|f_v(0,0)|^3\inner{f_{uv}(0,0)}
{\nu_u(0,0)}}{|f_{uv}(0,0)\times_g f_v(0,0)|^2},
\end{equation}
which might be useful rather than \eqref{eq:kH}.
The invariant $\mu_c$ plays a similar role as the
cuspidal curvature for non-degenerate singular points
of the first kind. For example,
the following assertion holds (cf.\ Proposition~\ref{lem:kappac-psiccr}).

\begin{prop}\label{prop:second-front}
\begingroup
\renewcommand{\theenumi}{{\rm(\arabic{enumi})}}
\renewcommand{\labelenumi}{{\rm(\arabic{enumi})}}
 Let $f$ be a frontal and $p$ a non-degenerate 
 singular point of the second kind. 
 Then  the following three assertions are equivalent{\rm:}
 \begin{enumerate}
  \item\label{item:second-1} 
       the mean curvature function is rationally bounded at $p$,
  \item\label{item:second-2} 
       $f$ is not a front at $p$,
  \item\label{item:second-3} 
       the normalized cuspidal curvature $\mu_c$
       vanishes at $p$. 
 \end{enumerate}
\endgroup
\end{prop}

\begin{Proof}
 The equivalency of \ref{item:second-1} and \ref{item:second-3}
 has already been mentioned. 
 So it is sufficient to show the 
 equivalency of \ref{item:second-2} and \ref{item:second-3}. 
 Let $(u,v)$ be an adapted coordinate system centered at $p$.
 Since $p$ is a non-degenerate singular point,
 the signed area density function (cf.\ Definition~\ref{def:lambda}) 
 satisfies $(\lambda_u(p),\lambda_v(p))\ne (0,0)$.
 Since $\lambda_u(p)=0$ and $f_u(p)=0$, we have
 \begin{equation}\label{eq:new0}
  0\ne \lambda_v(p)=\det_g(f_{u},f_v,\nu)_v(p)=
   \det_g(f_{uv}(p),f_v(p),\nu(p)).
 \end{equation}
 On the other hand, since $f_u(p)=0$, we have
 \[
  \langle f_v(p), \nu_u(p)\rangle=
  -\langle f_{uv}(p), \nu(p)\rangle=\langle f_u(p), \nu_v(p)\rangle=0.
 \]
 In particular, $f_v(p), \nu(p),\nu_u(p)$ are mutually
 orthogonal in $T_pM^3$.
 Thus, \eqref{eq:new0} implies that
 $\langle f_{uv}(p),\nu_u(p)\rangle \ne 0$ if and only if
 $f_v$ and $\nu_u$ are linearly independent
 (i.e.\ $f$ is a front at $p$), proving the assertion.
\end{Proof}
We set
\begin{equation}\label{eq:muP}
 \mu_\Pi^{}:=\kappa_\nu\mu_c
\end{equation}
and call it the {\it normalized product curvature\/} at $p$.
We now investigate the relationship between $\mu_\Pi^{}$
and the behavior of Gaussian curvature near the
singular set. 
Since the Gaussian curvature $K$ of $f$ satisfies (cf.\ \eqref{eq:cg})
\begin{align*}
\hat K:=v K
=v\frac{h_{11}h_{22}-(h_{12})^2}{g_{11}g_{22}-(g_{12})^2}
+O(v),
\end{align*}
we have the equality
\begin{align}\label{eq:HK}
 \hat K(u,0)&=
\left.\frac{
    -v^2\inner{\psi}{\nu_u}h_{22}-v^2\varepsilon\inner{\psi}{\nu_v}h_{22}
    -v^3\inner{\psi}{\nu_v}^2}{v^2|\psi\times_g f_v|^2}\right|_{v=0}\\
&=\frac{-\inner{\psi}{\nu_\eta}h_{22}}{|\psi\times_g f_v|^2}
=
 2\hat H(u,0) \frac{h_{22}(u,0)}{|f_v(u,0)|^2}
 =2\hat H(u,0)\kappa_{\nu}(u),
 \nonumber
\end{align}
here we used the relation $\kappa_\nu=h_{22}/|f_v|^2|_{v=0}$
obtained by \eqref{eq:kn2} in Proposition~\ref{prop:conti},
where $\kappa_\nu(u)$ is the limiting normal curvature
defined in Section~\ref{sec:prelim}.
Then we have
\begin{align}
\label{eq:K22}
 \hat K(u,0)
            &=2\hat H(u,0)
         \biggl(\kappa_\nu(p)+\hat\kappa'_\nu(p) u +O(u^2)\biggr)\\
\nonumber
&
=
         \biggl(\mu_c(p)+\mu'_c(p) u +O(u^2)\biggr)
         \biggl(\kappa_\nu(p)+\hat\kappa'_\nu(p) u +O(u^2)\biggr)\\
\nonumber
&=
\mu_\Pi^{}(p) +\biggl(\mu_c(p)\hat\kappa'_\nu(p)+\mu'_c(p)
\kappa_\nu(p)\biggr)u+O(u^2).
\end{align}
We remark that 
$\hat\kappa'_\nu(p)$
is the derivative of $\kappa_\nu(u)$
with respect to the non-arclength 
parameter $u$.  (On the other hand,
$\kappa'_\nu(p)$ in \eqref{eq:prod} is the
derivative with respect to the arclength
parameter.)
Since the notion of adapted coordinate system 
is described in terms of first fundamental forms,
the relation \eqref{eq:Vv} implies that
$\hat K(0,0)$ 
is an intrinsic
invariant.  So we get the following 

\begin{prop}
\label{prop:gaussbdd}
 Let $f\colon{}\Sigma^2\to (M^3,g)$ be a frontal
 and $p$ a non-degenerate singular point 
 of the second kind.
 Then the normalized product curvature $\mu_\Pi^{}$
 {\rm(}cf.\ \eqref{eq:muP}{\rm)}
 is an intrinsic invariant. Moreover,
 the Gaussian curvature  $K$ is rationally bounded at $p$
 if and only if $\mu_{\Pi}^{}(p)=0$. 
\end{prop}

Since $\hat\kappa'_\nu(p)=d\kappa_\nu(u,0)/du|_{u=0}$
depends on the parameter $u$,
we consider the co-vector
\[
  \omega_\nu(p):=\hat\kappa'_{\nu}(p)\, du\in T_p^{*}\Sigma^2
\]
instead, which does not depend on the choice of
parameter of the singular curve $\gamma$.
By \eqref{eq:K22},  we also get the following:

\begin{thm}
\label{thm:gaussbdd}
 Let $f\colon{}\Sigma^2\to (M^3,g)$ be a front
 and $p$ a non-degenerate singular point 
 of the second kind.
 Then the Gaussian curvature
 $K$ is rationally bounded {\rm(}resp.\ rationally continuous{\rm)}
 at $p$
 if and only if $\kappa_{\nu}(p)=0$ {\rm(}resp.\ $\kappa_{\nu}(p)=0$
 and $\omega_{\nu}(p)=0${\rm)}.
 Moreover, $K$ is bounded on a neighborhood $\U$
 of $p$ if and only if $\kappa_\nu$ vanishes along
 the singular curve in $\U$.
\end{thm}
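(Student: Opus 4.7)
The plan is to carry out the analysis in an adapted coordinate system $(u,v)$ centered at $p$, as in Definition~\ref{def:adapted2}, so that the singular set is the $u$-axis and $K$ is smooth off $\{v=0\}$. The structural work has already been done in the derivations leading to \eqref{eq:HK} and \eqref{eq:K22}, which yield
\[
  \hat K(u,0) \;=\; 2\hat H(u,0)\,\kappa_\nu(u),
\]
where $\hat K := vK$ and $\hat H := vH$ are smooth on $\U$. Because $f$ is a front, the remark following \eqref{eq:hathexpand} gives $2\hat H(0,0)=\kappa_H(p)\neq 0$ via \cite[Corollary 3.5]{SUY}, and this non-vanishing is the lever that converts each subsequent vanishing condition on $\hat K$ into one on $\kappa_\nu$ alone.

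Next I would translate rational boundedness and rational continuity of $K$ into vanishing conditions on $\hat K$ at the origin, following the blow-up argument already used in the proof of Theorem~\ref{thm:main1}. Setting $\lambda(r,\theta):=\sin\theta$ on the polar blow-up $\hat\Sigma^2_p$, one has $v=r\lambda$ and hence $\psi := \lambda\cdot K = \hat K/r$ off the exceptional divisor. A first-order Taylor expansion of $\hat K$ at the origin shows that $\psi$ extends to a $C^\infty$-function across $\pi^{-1}(p)$ iff $\hat K(0,0)=0$, and that, under this assumption, $\psi|_{r=0}$ is a constant multiple of $\sin\theta$ iff $\hat K_u(0,0)=0$. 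Combined with the identity above, this gives
\begin{align*}
  \hat K(0,0) = 0 &\iff \kappa_\nu(p) = 0, \\
  \bigl(\hat K(0,0),\hat K_u(0,0)\bigr) = (0,0) &\iff \kappa_\nu(p) = 0 \text{ and } \hat\kappa'_\nu(p) = 0,
\end{align*}
the second equivalence using $\kappa_\nu(p)=0$ to simplify the $u$-derivative of $\hat K(u,0)$ to $2\hat H(0,0)\,\hat\kappa'_\nu(p)$. Since $\omega_\nu(p) = \hat\kappa'_\nu(p)\,du$, the first two assertions of the theorem follow.

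For the last assertion, $K$ is bounded on $\U$ iff $\hat K(u,0)$ vanishes identically along the singular curve in $\U$, and by the identity this is equivalent to $\kappa_\nu(u)\equiv 0$ provided $\hat H(u,0)$ is nowhere zero on that curve. Verifying this last point is the main technical obstacle: at $p$ itself non-vanishing is \cite[Corollary 3.5]{SUY}, and at nearby singular points, which are cuspidal edges by Fact~\ref{fact:criteria}~\ref{item:Fact2}, the front assumption together with Lemmas~\ref{lem:psiccr} and~\ref{lem:kappac-psiccr} forces $\kappa_c\neq 0$, so Proposition~\ref{prop:mean} gives $\hat H\neq 0$ on the singular curve near $p$. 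Continuity of $\hat H$ then propagates the non-vanishing to a neighborhood, completing the argument.
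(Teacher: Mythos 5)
Your proposal is correct and follows essentially the same route as the paper: the identity $\hat K(u,0)=2\hat H(u,0)\kappa_\nu(u)$ from \eqref{eq:HK}--\eqref{eq:K22}, the non-vanishing $\hat H(0,0)=\kappa_H(p)/2\neq 0$ from \cite[Corollary 3.5]{SUY}, the blow-up characterization of rational boundedness and continuity with $\lambda=\sin\theta$ as in Theorem \ref{thm:main1}, and the division $\hat K=v\phi$ for the global boundedness statement. The only quibble is in the last paragraph: the detour through cuspidal edges and Proposition \ref{prop:mean} to get $\hat H\neq 0$ at nearby singular points is unnecessary (and nearby singular points of a second-kind point need not all be of the first kind in general), since the continuity of $\hat H$ together with $\hat H(0,0)\neq 0$ --- which you invoke anyway --- already gives non-vanishing on a neighborhood.
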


\begin{proof}
 The last assertion (i.e., boundedness of $K$ on $\U$)
 follows from \eqref{eq:HK}
 by using the same argument of the 
 the last  assertion of
 Theorem \ref{thm:main1}.
\end{proof}

\begin{rem}\label{rem:tail}
 Suppose that $p$ is a swallowtail singularity 
 satisfying $\kappa_\nu(p)>0$. 
 Let $(u,v)$ be an adapted coordinate system centered at $p$.
 Then the Gaussian curvature 
 takes different signs on $\{v>0\}$ and $\{v<0\}$ near $p$.
 We take the unit normal vector field $\nu$ so that
 the signed area density function $\lambda$ 
 satisfies $\lambda_v(p)>0$.
 A given swallowtail is called positive (resp.\ negative)
 if $\lambda_{\eta\eta}>0$ (resp.\ $\lambda_{\eta\eta}<0$)
 (cf.\ \cite[Section 3]{SUY4}).
 Then the domain $\{v<0\}$ (resp.\ $\{v>0\}$)
 corresponds to the tail part of the swallowtail 
 (that is, whose image has no 
 self-intersections near $p$, see \cite[p.\ 518]{SUY}),
 and so the sign of 
 $-\mu_\Pi^{}$ (resp.\ $\mu_\Pi^{}$)
 coincides with the sign of the Gaussian curvature
 of the tail part near $p$.
\end{rem}

\begin{defi}
 An adapted coordinate system $(u,v)$ at $p$
 is called {\it strongly adapted\/} if
 $f_{uv}$ is perpendicular to $f_v$ at $p$.
\end{defi}
\begin{lem}
 For each singular point $p$ of 
 the second kind, there exists a strongly 
 adapted coordinate system. 
\end{lem}
\begin{Proof}
 For an adapted coordinate system $(u,v)$,
 the new coordinate system $(U,V)$ defined by
 $U:=u$ and $V:=v-\inner{f_{uv}(p)}{f_v(p)}uv$
 gives a strongly adapted coordinate system.
\end{Proof}
Let $(u,v)$ be a strongly adapted coordinate system at
the singular point $p$ of the second kind and take $\psi$
as in \eqref{eq:psi-def}.
Since $\inner{\psi}{\nu}=\inner{f_u+\varepsilon f_v}{\nu}/v=0$
for $v\neq 0$, the continuity yields that $\psi$ is perpendicular
to $\nu$ on the singular set.
Moreover, since $\psi(0,0)=f_{uv}(0,0)$ is linearly independent
to $f_{v}(0,0)$,
$\{f_v,\psi,\nu\}$
gives a frame field near $(0,0)$.
Moreover, $f_v\times_g \psi$ is proportional to the unit normal vector
$\nu$.
\begin{thm}\label{thm:main3}
 Let $f$ be a frontal
 and $p$ its  singular point
 of the second kind,
 and let $\gamma(t)$ be the
 singular curve such that $\gamma(0)=p$.
 If $\gamma(t)$ $(t\ne 0)$ is
 a singular point of the
 first kind, then it holds that
 \[
   |\mu_c(p)|=
   \lim_{t\to 0}
   \frac{|\kappa_c(\gamma(t))|}{2|\kappa_s(\gamma(t))|^{1/2}}.
 \]
 In particular, the product curvature $\kappa_\Pi^{}(\gamma(t))$
 does not converge to  the normalized product curvature
 $\mu_{\Pi}^{}(p)=\kappa_\nu(p)\mu_c(p)$.
\end{thm}

\begin{Proof}
 Let $(u,v)$ be a strongly adapted coordinate system
 and take the null vector field as $\eta=\partial_u+\varepsilon(u) \partial_v$,
 where $\varepsilon(u)\neq 0$ for $u\neq 0$  and $\varepsilon(0)=0$.
 Since $\psi(p)=f_{uv}(p)$ is perpendicular to
 $f_v(p)$, \eqref{eq:kH} reduces to
 \begin{equation}\label{eq:kc_red}
  \mu_c(p)=-\frac{\inner{\psi}{\nu_u(p)}}{|\psi|^2},
 \end{equation}
 and by a choice of $\nu$, it holds at $p$ that
 \begin{equation}\label{eq:fvpsi}
    f_v \times_g \psi 
     = |\psi|\nu.
 \end{equation}%
 By \cite[Page 501]{SUY}, we have that
 \begin{equation}\label{eq:limks}
  \lim_{u\to 0}|\varepsilon(u) \kappa_s(u)|
  =|\det_g(f_v(p),f_{uv}(p),\nu)|\\
  =|\det_g(f_v(p),\psi,\nu)|
  =|\psi|.
 \end{equation}
 On the other hand, we have that
 \[
   f_{\eta\eta}=\psi_{\eta} v+\psi v_{\eta}, \quad
   f_{\eta\eta\eta}=\psi_{\eta\eta} v+2\psi_\eta v_{\eta}+\psi v _{\eta\eta}.
 \]
 Since $v_\eta=v_u+\varepsilon v_v=\varepsilon$,
 we have that
 \[
 f_{\eta\eta}(u,0)=\psi \varepsilon, \quad
 f_{\eta\eta\eta}(u,0)=2\psi_\eta \varepsilon+\psi v _{\eta\eta}.
 \]
 Let $\gamma(u)=(u,0)$ be the singular curve and
 set
 \[
 \hat\eta(u) := \sgn(\varepsilon(u))\eta
              = \sgn(\varepsilon(u))\bigl(\partial_u + \varepsilon(u)\partial_v)
 \]
 for $u\neq 0$.
 Then $\{\gamma',\hat\eta\}$ is positively oriented for each $u\neq 0$,
 and so, the definition \eqref{eq:kc} of the cuspidal curvature
 $\kappa_c$ reduces to
 \begin{align*}
 \kappa_c &= \sgn(\varepsilon)
            \frac{|f_u|^{3/2}\det_g(f_u,f_{\eta\eta},f_{\eta\eta\eta})}
  {|f_u\times_g f_{\eta\eta}|^{5/2}}\\
  &= \sgn(\varepsilon)
  \frac{|\varepsilon f_v|^{3/2}
  \det_g(-\varepsilon f_v,\varepsilon\psi,2\varepsilon\psi_{\eta}+\psi
  v_{\eta\eta})}{|(\varepsilon f_v)\times_g (\varepsilon \psi)|^{5/2}}\\
  & =
  \frac{-2|f_v|^{3/2}\det_g(f_v,\psi,\psi_{\eta})}{%
          \sqrt{|\varepsilon|}|f_v\times_g\psi|^{5/2}  }
    = \frac{-2|f_v|^{3/2}\inner{f_v\times _g\psi}{\psi_{\eta}}}{%
        \sqrt{|\varepsilon|}|f_v\times_g\psi|^{5/2}}.
 \end{align*}
 Thus, by \eqref{eq:limks} and \eqref{eq:fvpsi},
 \begin{align*}
   \lim_{u\to 0}\frac{|\kappa_c|}{2|\kappa_s|^{1/2}}
   &= \lim_{u\to 0} \left|
    \frac{|f_v|^{3/2}\inner{f_v\times_g\psi}{\psi_{\eta}}}{%
    \sqrt{|\varepsilon \kappa_s|} |f_v\times_g\psi|^{5/2}}
    \right|\\
   & = \frac{|\inner{\nu(p)}{\psi_{u}(p)}|}{|\psi(p)|^{2}}
     = \frac{|-\inner{\nu_{u}(p)}{\psi(p)}|}{|\psi(p)|^{2}}
     =|\mu_c(p)|
 \end{align*}
 because $\eta=\partial_u$ at $p$, proving the assertion.
\end{Proof}

We now assume that $p$
is a swallowtail singularity of $f$.
We set (cf.\ \eqref{eq:kc02})
\[
  \tau_s:=
       2\sqrt{2}\lim_{t\to 0}\sqrt{|t|}\,|\kappa_s(\gamma(t))|,
\]
and call it the {\em limiting singular curvature\/}
at $p$, where $t$ is the arclength parameter of
$\hat\gamma(t)=f\bigl(\gamma(t)\bigr)$ for $t\neq 0$.
By definition,
$\tau_s$ is an intrinsic invariant.
We remark that $\kappa_s$ diverges to $-\infty$
at a swallowtail (\cite[Corollary 1.14]{SUY}),
and only the absolute value of $\kappa_s$
is meaningful.
On the other hand,
$\varepsilon'(0)\neq 0$ by Fact \ref{fact:criteria} \ref{item:Fact2},
where $\varepsilon(u)$ is the function as in \eqref{eq:second-eta},
and $'=d/du$.
By 
$f_{uu}(p)=-\varepsilon'(0)f_v(p)$,
it holds that 
\begin{equation}\label{eq:gamma2}
\hat\gamma''(0)\ne0,
\end{equation}
where $\hat\gamma=f\circ\gamma$.
\begin{prop}\label{prop:36}
 Let $f:\Sigma^2\to (M^3,g)$ be a front, and
 $p\in \Sigma^2$ a swallowtail singularity
 and $\gamma(u)$ the singular curve such that $\gamma(0)=p$.
 Then the following identity holds {\rm(}cf.\ \eqref{eq:gamma2}{\rm)}
 \begin{equation}\label{eq:36}
 \tau_s    =
         \frac{|\det_{g}\bigl(\hat \gamma''(0),\hat \gamma'''(0),\nu(p)\bigr)|}{%
           |\hat \gamma''(0)|^{5/2}},
 \end{equation}
 where $'=d/du$ and $\hat\gamma=f\circ\gamma$.
\end{prop}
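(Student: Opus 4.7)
\medskip
\noindent\textbf{Proof proposal.}
The plan is to compute both sides in a strongly adapted coordinate system $(u,v)$ at $p$ (Definition following Theorem~\ref{thm:gaussbdd}), and show that each equals $2|\psi(p)|/|\varepsilon'(0)|^{1/2}$, where $\psi$ is the smooth extension from \eqref{eq:psi-def} and $\varepsilon(u)$ is the function appearing in the null vector field $\eta=\partial_u+\varepsilon(u)\partial_v$ of \eqref{eq:second-eta}. The swallowtail condition (Fact~\ref{fact:criteria}\ref{item:Fact2}) forces $\varepsilon'(0)\ne0$.

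First I would compute the derivatives of $\hat\gamma(u)=f(u,0)$ at $u=0$. From $f_{\eta}=v\psi$ one has the identity $f_u(u,0)=-\varepsilon(u)f_v(u,0)$ along the singular curve. Differentiating at $u=0$ (using $\varepsilon(0)=0$) gives
\begin{align*}
\hat\gamma'(0)&=0,\\
\hat\gamma''(0)&=-\varepsilon'(0)f_v(p),\\
\hat\gamma'''(0)&=-\varepsilon''(0)f_v(p)-2\varepsilon'(0)f_{uv}(p).
\end{align*}
Since $|f_v(p)|=1$, we get $|\hat\gamma''(0)|=|\varepsilon'(0)|$. Expanding the triple product and using $f_{uv}(p)=\psi(p)$ together with $\det_g(f_v,f_v,\nu)=0$ yields
\[
\det_g\bigl(\hat\gamma''(0),\hat\gamma'''(0),\nu(p)\bigr)
 =2(\varepsilon'(0))^{2}\det_g(f_v,\psi,\nu)(p).
\]
In a \emph{strongly} adapted coordinate system $\psi(p)\perp f_v(p)$, and continuity of $\inner{\psi}{\nu}=0$ across $v=0$ gives $\psi(p)\perp\nu(p)$. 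Hence $|\det_g(f_v,\psi,\nu)(p)|=|\psi(p)|$, and the right-hand side of \eqref{eq:36} reduces to $2|\psi(p)|/|\varepsilon'(0)|^{1/2}$.

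For the left-hand side I would use the asymptotic formula \eqref{eq:limks}, which in strongly adapted coordinates gives $\lim_{u\to0}|\varepsilon(u)\kappa_s(u)|=|\psi(p)|$, so
\[
|\kappa_s(u)|=\frac{|\psi(p)|}{|\varepsilon'(0)\,u|}\bigl(1+o(1)\bigr)\quad\text{as }u\to0.
\]
Since $|f_u(u,0)|=|\varepsilon(u)||f_v(u,0)|\sim|\varepsilon'(0)||u|$, the arclength parameter $t$ of $\hat\gamma$ measured from $p$ satisfies $|t|=|\varepsilon'(0)|u^{2}/2+o(u^{2})$. Substituting $|u|\sim\sqrt{2|t|/|\varepsilon'(0)|}$ gives
\[
\sqrt{|t|}\,|\kappa_s(\gamma(t))|\longrightarrow\frac{|\psi(p)|}{\sqrt{2}\,|\varepsilon'(0)|^{1/2}},
\]
and hence $\tau_s=2\sqrt{2}\cdot|\psi(p)|/(\sqrt{2}|\varepsilon'(0)|^{1/2})=2|\psi(p)|/|\varepsilon'(0)|^{1/2}$, matching the right-hand side.

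The routine parts are the Taylor expansion of $\hat\gamma$ and the parameter change $u\leftrightarrow t$. The step that requires the most care is the asymptotic analysis of $\kappa_s$: one must invoke \eqref{eq:limks} rather than computing $\kappa_s$ directly from \eqref{eq:ks}, since $\hat\gamma'$ vanishes at $p$ and the arclength reparametrization is singular there. The use of a strongly adapted frame is what makes $|\det_g(f_v,\psi,\nu)(p)|$ collapse to $|\psi(p)|$, which is the key simplification allowing the two expressions to be identified.
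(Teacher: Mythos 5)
Your proof is correct, and every ingredient you use ($f_u(u,0)=-\varepsilon(u)f_v(u,0)$, $\psi(p)=f_{uv}(p)$, the orthogonality $\psi\perp f_v,\nu$ in a strongly adapted system, and the asymptotic \eqref{eq:limks}) is established in the paper, so the argument closes. It is, however, a genuinely different route from the paper's. You compute \emph{both} sides separately in the frame data of Section~\ref{sec:general} and identify them with the common value $2|\psi(p)|/|\varepsilon'(0)|^{1/2}$; in particular the left-hand side is handled by importing the limit \eqref{eq:limks} (which rests on a citation to \cite{SUY}) together with $\varepsilon(u)\sim\varepsilon'(0)u$. The paper instead never introduces $\psi$ or $\varepsilon$ here: it evaluates $\lim_{u\to0}\det_g(\hat\gamma',\hat\gamma'',\nu)/u^2$ and $\lim_{u\to0}|\hat\gamma'|^2/u^2$ directly by two applications of l'H\^opital (using $\hat\gamma'(0)=0$ and the parallelism of $\det_g$ and $\inner{~}{~}$), which produces the right-hand side of \eqref{eq:36} without ever writing it in coordinates. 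The paper's version is more self-contained and works for any parametrization-level data of $\hat\gamma$; yours has the advantage of exhibiting the explicit value of $\tau_s$ in terms of $\psi(p)$ and $\varepsilon'(0)$, which ties Proposition~\ref{prop:36} directly to the quantities appearing in Theorem~\ref{thm:main3} and makes the identity $\tau_c=\sqrt{|\tau_s|}\,|\kappa_H(p)|$ transparent. One stylistic caution: since $\hat\gamma''$ and $\hat\gamma'''$ are covariant derivatives, your Taylor expansion of $-\varepsilon(u)f_v(u,0)$ tacitly uses $\nabla_{\partial_u}f_v=\nabla_{\partial_v}f_u$; this is fine for the Levi-Civita connection but worth saying when $M^3$ is not Euclidean.
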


\begin{Proof}
 Take a strongly adapted coordinate system $(\U;u,v)$ and
 let $t=t(u)$ be the arclength parameter of $\hat\gamma=f(u,0)$
 for $u\neq 0$.
 Since
 the tensor fields
 $\det_g$ and $\inner{~}{~}$ are
 parallel with respect to the Levi-Civita connection
 of $(M^3,g)$, we have
 \begin{align*}
  \lim_{u\to0}
  \frac{\det_g\big(\hat\gamma',\hat\gamma'',\nu(\gamma)\big)}{u^2}
  &=
  \lim_{u\to0}
  \frac{\det_g\big(\hat\gamma',\hat\gamma'',\nu(\gamma)\big)''}{2}\\
  &=
  \lim_{u\to0}
  \frac{\det_g\big(\hat\gamma',\hat\gamma''',\nu(\gamma)\big)'
  +\det_g\big(\hat\gamma',\hat\gamma'',\nu(\gamma)'\big)'}{2}\\
  &=
  \frac{\det_g\big(\hat\gamma'',\hat\gamma''',\nu(\gamma)\big)}{2}\bigg|_{u=0},
 \end{align*}
 and
 \begin{equation}\label{eq:arclength-limit}
  \lim_{u\to0}
   \dfrac{\inner{\hat\gamma'}{\hat\gamma'}}{u^2}
   =
   \displaystyle
   \lim_{u\to0}
   \dfrac{
   \inner{\hat\gamma'}{\hat\gamma'}''}{2}\\[3mm]
   =
   \displaystyle
   \inner{\hat\gamma''}{\hat\gamma''}\big|_{u=0}.
 \end{equation}
 Since
 \[
  |\kappa_s(u)|
   =
   \frac{|\det_g(\hat\gamma'(u),\hat\gamma''(u),\nu(\gamma(u)))|}{%
         |\hat\gamma'(u)|^{3}},
 \]
 we have
 \[
 \lim_{u\to0}|u||\kappa_s(u)|
 =
 \lim_{u\to0}
 \dfrac{|\det_g(\hat\gamma',\hat\gamma'',\nu(\gamma))|}{|u|^2}
 \dfrac{|u|^3}
 {|\hat\gamma'|^3}
 =
 \dfrac{|\det_g(\hat\gamma'',\hat\gamma''',\nu(\gamma))|}
 {2|\hat\gamma''|^3}\bigg|_{u=0}.
 \]
 On the other hand,
 \[
  \lim_{u\to0}
   \frac{|t(u)|}{u^2}
   =
   \lim_{u\to0}
   \left|\frac{t(u)}{u^2}\right|
   =
   \lim_{u\to0}
   \frac{|\hat\gamma'(u)|}{2|u|}
    =
   \lim_{u\to0}
    \frac{1}{2}
   \sqrt{\frac{\inner{\hat\gamma'(u)}{\hat\gamma'(u)}}{u^2}}
    =
    \frac{|\hat\gamma''(0)|}{2}
 \]
holds because of \eqref{eq:arclength-limit}. Thus we have
\begin{equation}\label{eq:tu}
 \lim_{u\to0}
  \frac{\sqrt{|t(u)|}}{|u|}
  =
  \frac{\sqrt{|\hat\gamma''(0)|}}{\sqrt{2}}.
\end{equation}
Hence
\begin{align*}
   2&\sqrt{2}\lim_{t\to0}
   \sqrt{|t|}|\kappa_s(t)|
   =
   2\sqrt{2}\lim_{u\to0}
   \bigg|
   \frac{\det_g\bigl(\hat\gamma',\hat\gamma'',\nu(\gamma)\bigr)}{u^2}\,
   \frac{\sqrt{|t(u)|}}{u}\,
   \frac{u^3}{|\hat\gamma'|^3}
   \bigg|\\
   &=
   2\sqrt{2}\frac{|\det_g(\hat\gamma'',\hat\gamma''',\nu(\gamma))|}{2}\,
   \frac{\sqrt{|\hat\gamma''|}}{\sqrt{2}}\,
   \frac{1}{|\hat\gamma''|^3}\bigg|_{u=0}
   =
   \frac{|\det_g(\hat\gamma'',\hat\gamma''',\nu(\gamma))|}
   {|\hat\gamma''|^{5/2}}\bigg|_{u=0}
 \end{align*}
 proves the assertion.
\end{Proof}

\begin{cor}\label{cor:projection}
 Let $f:\Sigma^2\to \R^3$ be a front, and
 $p\in \Sigma^2$ a swallowtail singularity.
 Let
 $P$ be the
 tangential plane of $f$ at $f(p)$
 {\rm(}that is,
  the plane passing through $f(p)$
 which is orthogonal to $\nu(p)${\rm)},
 and $\sigma$ the orthogonal projection of
 $\hat \gamma:=f\circ \gamma$
 to the plane $P$.
 Then
 $\tau_s$ coincides with the
 absolute value of the cuspidal curvature of the
 curve $\sigma$ in the plane $P$.
\end{cor}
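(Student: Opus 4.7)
The plan is to reduce everything to Proposition \ref{prop:36}, which gives the spatial formula
\[
\tau_s = \frac{|\det_g(\hat\gamma''(0),\hat\gamma'''(0),\nu(p))|}{|\hat\gamma''(0)|^{5/2}},
\]
and then to show that the right-hand side is literally the planar cuspidal-curvature formula \eqref{eq:kc0} applied to the projected curve $\sigma$.

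First I would fix a strongly adapted coordinate system $(u,v)$ at $p$ (as in Definition \ref{def:adapted2} plus the perpendicularity condition), so that $\hat\gamma(u)=f(u,0)$ and $t=0$ corresponds to $u=0$. Since $p$ is of the second kind we have $\hat\gamma'(0)=f_u(p)=0$, and since $f_{uu}(p)=-\varepsilon'(0)f_v(p)$ with $\varepsilon'(0)\neq 0$ (Fact \ref{fact:criteria} \ref{item:Fact2}), the vector $\hat\gamma''(0)$ is a nonzero multiple of $f_v(p)$. In particular $\hat\gamma''(0)$ is perpendicular to $\nu(p)$, i.e.\ $\hat\gamma''(0)$ already lies in the tangential plane $P$.

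Next, writing $\pi_P\colon\R^3\to P$ for the orthogonal projection, $\sigma=\pi_P\circ\hat\gamma$ satisfies
\[
\sigma'(0)=\pi_P(\hat\gamma'(0))=0,\qquad
\sigma''(0)=\pi_P(\hat\gamma''(0))=\hat\gamma''(0),
\]
and
\[
\sigma'''(0)=\hat\gamma'''(0)-\inner{\hat\gamma'''(0)}{\nu(p)}\nu(p).
\]
Identifying the planar area form on $P$ with $\det(\cdot,\cdot)=\det_g(\cdot,\cdot,\nu(p))$ restricted to $P$, the key observation is that
\[
\det_g\bigl(\sigma''(0),\sigma'''(0),\nu(p)\bigr)
=\det_g\bigl(\hat\gamma''(0),\hat\gamma'''(0),\nu(p)\bigr),
\]
because the correction term involves $\det_g(\hat\gamma''(0),\nu(p),\nu(p))=0$. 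Combined with $|\sigma''(0)|=|\hat\gamma''(0)|$, the planar cuspidal curvature of $\sigma$ at $0$ is
\[
\tau_\sigma
=\frac{\bigl|\det(\sigma''(0),\sigma'''(0))\bigr|}{|\sigma''(0)|^{5/2}}
=\frac{|\det_g(\hat\gamma''(0),\hat\gamma'''(0),\nu(p))|}{|\hat\gamma''(0)|^{5/2}},
\]
which equals $\tau_s$ by Proposition \ref{prop:36}.

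The one point that needs a brief check is that $\sigma$ really has a $3/2$-cusp at $0$, i.e.\ that $\sigma''(0)$ and $\sigma'''(0)$ are linearly independent in $P$. This is equivalent to the nonvanishing of $\det_g(\hat\gamma''(0),\hat\gamma'''(0),\nu(p))$, which is the only potential obstacle: it amounts to the genericity inherent in the swallowtail condition and follows from the front assumption together with Fact \ref{fact:criteria} \ref{item:Fact2} (the $3/2$-cusp shape of $\hat\gamma$ is a standard consequence of the swallowtail criterion). Once this is recorded, the two displayed equalities above give the result.
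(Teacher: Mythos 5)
Your argument is correct and follows essentially the same route as the paper: both reduce to Proposition \ref{prop:36} in a strongly adapted coordinate system, observe that $\inner{\hat\gamma''(0)}{\nu(p)}=0$ (you via $f_{uu}(p)=-\varepsilon'(0)f_v(p)$, the paper via $\inner{f_{uu}(p)}{\nu(p)}=-\inner{f_u(p)}{\nu_u(p)}=0$), and note that the orthogonal projection changes neither the determinant in the numerator nor $|\sigma''(0)|$. Your extra remark about verifying that $\sigma$ is a genuine $3/2$-cusp is a reasonable point the paper glosses over, though your justification of it is only sketched.
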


\begin{Proof}
 By taking a strongly adapted coordinate system $(\U;u,v)$,
 it holds that
 $\sigma(u)=\hat\gamma(u)-\inner{\hat\gamma(u)}{\nu(p)}\nu(p)$.
 Since $|\nu(p)|=1$,
 the absolute value of cuspidal curvature of $\sigma$ is
 equal to
 \[
 \left|
 \frac{\det\big(\hat\gamma''(0),\hat\gamma'''(0),\nu(p)\big)}
 {|\sigma''(0)|^{5/2}}\right|.
 \]
 Since
 $\hat\gamma''(u)=f_{uu}(u,0)$
 and
 $f_u(p)=0$,
 we have
 \[
   \inner{f_{uu}(p)}{\nu(p)}=-\inner{f_{u}(p)}{\nu_u(p)}=0.
 \]
 Thus
 $|\sigma''(p)|=|\hat\gamma''(p)|$ holds.
 Hence we have the assertion.
\end{Proof}

We next consider the limit
\[
  \tau_c:
=   \frac{\sqrt{2\sqrt{2}}}{2}
   \lim_{t\to 0}\left||t|^{1/4}\kappa_c(\gamma(t))\right|,
\]
where $t$ is the arclength parameter of $\hat\gamma$.
We call $\tau_c$ the {\it limiting cuspidal curvature}.

\begin{prop}\label{cor:upper}
 If $f$ is a front and $p$ is a swallowtail,
 then  it holds that
 \begin{equation}\label{eq:tauc}
  \tau_c= \sqrt{|\tau_s|}|\mu_c(p)|.
 \end{equation}
 Moreover, the following identity holds{\rm:}
 \begin{equation}\label{eq:tauc2}
  (\sqrt{|\tau_s|}\hat K(p)=)
    \sqrt{|\tau_s|}\kappa_{\nu}(p)\mu_c(p)
    =\sgn(\mu_c(p))\kappa_\nu(p) \tau_c.
 \end{equation}
 In particular, the absolute value of
 the right hand side is intrinsic.
\end{prop}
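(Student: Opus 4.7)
My plan is to obtain both identities by a short asymptotic computation, using Theorem~\ref{thm:main3} as the workhorse and the definitions of $\tau_s$ and $\tau_c$ as a dictionary. Since $p$ is a swallowtail, the nearby singular points $\gamma(t)$ with $t\neq 0$ are of the first kind by Fact~\ref{fact:criteria}\,\ref{item:Fact2}, so $\kappa_c(\gamma(t))$ and $\kappa_s(\gamma(t))$ are both defined, and Theorem~\ref{thm:main3} yields the asymptotic $|\kappa_c(\gamma(t))|\sim 2|\kappa_s(\gamma(t))|^{1/2}\,|\kappa_H(p)|$ as $t\to 0$, where $t$ is the arclength parameter of $\hat\gamma$.

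First I would multiply this asymptotic by $|t|^{1/4}$ and rewrite the right-hand side as $2\sqrt{\sqrt{|t|}\,|\kappa_s(\gamma(t))|}\,|\kappa_H(p)|$. Since the definition of $\tau_s$ gives $\sqrt{|t|}\,|\kappa_s(\gamma(t))|\to|\tau_s|/(2\sqrt{2})$ as $t\to 0$, passing to the limit produces $\lim_{t\to 0}|t|^{1/4}|\kappa_c(\gamma(t))|=(2/\sqrt{2\sqrt{2}})\sqrt{|\tau_s|}\,|\kappa_H(p)|$. Multiplying by the normalization constant $\sqrt{2\sqrt{2}}/2$ from the definition of $\tau_c$ collapses the powers of $2$ and yields \eqref{eq:tauc}.

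For \eqref{eq:tauc2}, I would use \eqref{eq:tauc} to rewrite $\sqrt{|\tau_s|}\,\kappa_H(p)=\sgn(\kappa_H(p))\,|\kappa_H(p)|\sqrt{|\tau_s|}=\sgn(\kappa_H(p))\,\tau_c$, and then multiply by $\kappa_\nu(p)$. The parenthetical identity $\sqrt{|\tau_s|}\,\hat K(p)=\sqrt{|\tau_s|}\,\kappa_\nu(p)\,\kappa_H(p)$ comes from evaluating \eqref{eq:HK} at $u=0$ together with $2\hat H(p)=\kappa_H(p)$ from \eqref{eq:hathexpand}, which gives $\hat K(p)=\kappa_H(p)\,\kappa_\nu(p)$.

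Finally, for intrinsicness, $|\tau_s|$ is intrinsic because $\kappa_s$ is intrinsic along the first-kind locus by \cite{SUY} and arclength is intrinsic; combined with the already-noted intrinsicness of $|\hat K(p)|$ (coming from \eqref{eq:Vv} in the discussion before Theorem~\ref{thm:gaussbdd}), the quantity $|\sqrt{|\tau_s|}\,\hat K(p)|=|\sgn(\kappa_H(p))\,\kappa_\nu(p)\,\tau_c|$ is intrinsic. The only nontrivial part is careful bookkeeping of the various $2^{k/4}$ factors; no geometric input beyond Theorem~\ref{thm:main3} is needed.
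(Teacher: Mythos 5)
Your proposal is correct and follows essentially the same route as the paper: Theorem~\ref{thm:main3} combined with the definition of $\tau_s$ (whose limit exists by Proposition~\ref{prop:36}) gives \eqref{eq:tauc} after the $2^{k/4}$ bookkeeping you carry out explicitly, and \eqref{eq:tauc2} plus intrinsicness follow exactly as you describe from \eqref{eq:hathexpand}, \eqref{eq:HK} and the intrinsicness of $|\tau_s|$ and $|\hat K(p)|$. (Your version of the final chain of equalities is in fact the consistent one; the factor $2$ in front of $\sqrt{|\tau_s|}\hat K(p)$ in the paper's displayed computation is a typo.)
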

\begin{Proof}
 Using Theorem \ref{thm:main3} and Proposition \ref{prop:36},
 we get \eqref{eq:tauc}.
 By the definition of $\tau_c$, \eqref{eq:hathexpand}
 and \eqref{eq:HK}, we have
 \[
   \sgn(\mu_c(p))\kappa_\nu(p) \tau_c
    =
   \sqrt{|\tau_s|}\kappa_\nu(p)\mu_c(p)
    =
   2\sqrt{|\tau_s|}\kappa_\nu(p)\hat{H}(p)
   =
   \sqrt{|\tau_s|}\hat{K}(p),
 \]
 which proves the assertion.
\end{Proof}
\begin{exa}\label{exa:sw2}
 Consider a front\footnote{This example
 was suggested by the referee.}
 \[
 f(u,v):= \left(
 v+\frac{u^2}{2}-\frac{b^2u^2v}{2}-\frac{b^2u^4}{8},\ 
 \frac{bu^3}{3}+buv,\ 
 \frac{cv^2}{2}
 \right)
 \qquad (b,c>0)
 \]
 in $\R^3$, where
 $\nu:=
 \bigl(2 b c \left(u^2+v\right),c u \left(b^2 \left(u^2+2
        v\right)-2\right),
    -b \left(b^2 u^2+2\right)\bigr)/\delta$
 and
 \[
    \delta:=
        \sqrt{b^6 u^4+b^4 u^2 
               \left(c^2 \left(u^2+2 v\right)^2+4\right)
               +4 b^2 \left(c^2 v^2+1\right)+4 c^2 u^2}.
 \]
 The $u$-axis consists of the singular set of $f$,
 and $\eta=\partial_u-u\partial_v$ gives a null vector field
 of $f$ on the $u$-axis. Thus
 the origin $(0,0)$ is a swallowtail.
 Other singular points are cuspidal edges.
 We have
 \begin{equation}\label{ex:100}
  2\hat H(u,0)
   =\frac{c}{b^2}+O(u^2),
  \qquad
   \hat K(u,0)
   =-\frac{c^2}{b^2}+O(u^2).
 \end{equation}
 On the other hand,
 we see that
\allowdisplaybreaks{%
 \begin{align}
  \kappa_\nu(u) 
  &=
     \dfrac{\inner{f_{vv}}{\nu}}{|f_v|^2}
  =
  -c+O(u^2),\label{ex:300}\\
  \kappa_s(u)
  &=
   \dfrac{\det(f_u,f_{uu},\nu)}{|f_u|^3}
  =
  -\dfrac{b}{u}+O(u),\label{ex:400}\\
  |\kappa_c(u)|
  &=
  \dfrac{|f_u|^{3/2}
  \left|\det(f_u,f_{\eta\eta},f_{\eta\eta\eta})\right|}
  {|f_u\times f_{\eta\eta}|^{5/2}}
  =
  \dfrac{2c}{b^{3/2}\sqrt{|u|}}+O(u^{3/2}).\label{ex:500}
 \end{align}}%
 Since $(u,v)$ is an adapted coordinate system, 
 using \eqref{eq:kH}, we have
 \begin{equation}\label{eq:mc}
  \mu_c(0)=c/b^2.
 \end{equation}
 In particular,  the constant term 
 of $\hat K$ (cf.\ \eqref{ex:100})
 is equal to the normalized product curvature
 \[
   \mu_\Pi^{}(0)=\kappa_\nu(0)\mu_c(0)=-c^2/b^2.
 \]
 On the other hand,
 by \eqref{ex:400} and \eqref{ex:500},
 we see that 
 \[
 \lim_{u\to0}
 \dfrac{|\kappa_c(u)|}{2|\kappa_s(u)|^{1/2}}
 =
 \lim_{u\to0}
 \dfrac{2c/(b^{3/2}\sqrt{|u|})}{2\left|{b}/{u}\right|^{1/2}}
 =
 \dfrac{c}{b^2},
 \]
 which coincides with $|\mu_c(0)|$
 (cf.\ Theorem \ref{thm:main3}).
 We set
 \[
   t(u):=\int_0^u |f_u(w,0)|dw,
 \]
 then $t$ gives the arclength parameter of
 the image of the singular curve.
 By \eqref{eq:tu} and \eqref{ex:400}, we have  that
 \[
   \tau_s=2\sqrt{2}\lim_{u\to0}\sqrt{|t(u)|}|\kappa_s(u)|
    =
    2\lim_{u\to 0}|u\kappa_s(u)|
    =2b.
 \]
 On the other hand, we have that
 \[
   \left.\frac{\det(f_{uu},f_{uuu},\nu)}{|f_{uu}|^{5/2}}\right|_{(u,v)=(0,0)}=2b,
 \]
 which verifies 
 the formula \eqref{eq:36}.
 We next see that
 \[
  \tau_c =\frac{\sqrt{2\sqrt{2}}}{2}
   \lim_{t\to 0}\left||t|^{1/4}\kappa_c(\gamma(t))\right|
   =
   \frac{\sqrt{2\sqrt{2}}}{2}
   \lim_{u\to 0}\left|\dfrac{|u|^{1/2}}{2^{1/4}}
   \dfrac{2c}{b^{3/2}\sqrt{|u|}}\right|
   =
   \sqrt{2}\dfrac{c}{b^{3/2}}.
 \]
 On the other hand, we have
 $\sqrt{|\tau_s|}|\mu_c(0)|=\sqrt{2}c/{b^{3/2}}$,
 which verifies \eqref{eq:tauc}.
 We also see that $\sqrt{|\tau_s|}\hat K(0,0)=-\sqrt{2}c^2/b^{3/2}$,
 and
 $\kappa_\nu(0)\tau_c=-\sqrt{2}c^2/b^{3/2}$,
 verifying \eqref{eq:tauc2}.
 Finally,  the signed area density function $\lambda$
 (cf.\ Definition \ref{def:lambda}) satisfies
 $
  \lambda_v(0,0)=b~ (>0)
 $.
 By a straightforward calculation,  we have
 \begin{align*}
  \lambda_\eta&=
-\frac{u}{2d} \biggl(b^6 \left(u^4-2 u^2 v\right)
         +b^4 \left(u^2-v\right) 
        \left(c^2 \left(u^2+2 v\right)^2+4\right)  \\
  & \phantom{aaaaaaaaaaaaaaaaaaaaaaaaaaaaaaa} 
  +b^2 \left(8 c^2 v^2+4\right)+4 c^2 \left(u^2-v\right)\biggr)
 \end{align*}
 and
 \[
   \lambda_{\eta\eta}(0,0)=
    \left.\frac{\partial \lambda_\eta}{\partial u}\right|_{(u,v)=(0,0)}
      =-b~(<0).
 \]
 Thus $(0,0)$ is a negative swallowtail and the tail part is $\{v>0\}$
 (cf.\ Remark~\ref{rem:tail}).
 In particular, the sign of the Gaussian curvature of the tail part 
 coincides with that of $\mu_\Pi^{}(0)=-c^2/b^2$, 
 namely, it is negative valued 
 on the tail part near $(0,0)$
 (cf.\ Fig.~\ref{fig:curvedsw}).
\end{exa}

\begin{figure}[hbt!]
\centering
 \includegraphics[width=.4\linewidth]{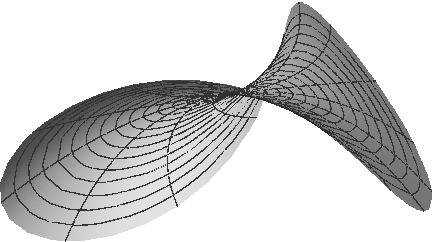}
 \caption{The swallowtail of Example 
 \ref{exa:sw2} with $a=b=1$.}
\label{fig:curvedsw}
\end{figure}

\begin{exa}[Cones of revolution]\label{exa:cone}
Let $\gamma(v)=(x(v),z(v))$ ($|v|<\epsilon$)
be a smooth regular plane curve with 
arclength parameter
such that $x(v)=0$ if and only if $v=0$,
where $\epsilon>0$.
We set
\[
 f(u,v):=(x(v)\cos u,x(v)\sin u,z(v)),
 \qquad (u,v)\in \R/(2\pi\mathbf Z)\times 
 (-\epsilon,\epsilon).
\]
Then
$\nu(u,v):=(-z'(v)\cos u,-z'(v)\sin u,x'(v))$
gives a unit normal vector field of $f$.
The singular set of $f$ is $\{v=0\}$,
and its image is a point, so called
a {\it cone-like singularity}. 
Since the signed area density function
is given by $\lambda(u,v)=-x(v)$,
each singular point is non-degenerate
if and only if $x'(0)\ne 0$.
Since the null vector field of $f$ 
is $\partial_u$,
the singular points $(u,0)$
are all of the second kind.
Moreover, $f$ is a front at $(u,0)$
if and only if $\nu_u\ne 0$,
that is, $z'(0)\ne 0$.
So we now assume $x'(0)z'(0)\ne 0$.
The limiting normal curvature of $f$
at $(u,0)$ is $(\kappa_\nu:=)-x''(0)/z'(0)$,
which coincides with the curvature of 
$\gamma(t)$ at $t=0$.
In particular, the Gaussian curvature $K$
of $f$ is unbounded if $t=0$ is not
an inflection point of $\gamma$.
In fact, $K(u,v)=-x''(v)/x(v)$ 
diverges and changes sign
at $v=0$ when $\kappa_\nu\ne 0$, 
which verifies the third assertion 
of Theorem~\ref{thm:A}.

On the other hand, the normalized 
cuspidal curvature of $f$ is given by 
$(\mu_c:=)z'(0)/x'(0)$,
which does not vanish.
In fact, 
\[
  H(u,v)=\frac12\left(\frac{z'(v)}{x(v)}-
         \frac{z''(v)}{x'(v)}\right)
\]
diverges at $v=0$, 
which verifies 
Proposition \ref{prop:second-front}.
\end{exa}

We now remark that Theorem~\ref{thm:B}
in the introduction  
follows immediately from Corollary 
\ref{cor:cusp} and 
Theorem \ref{thm:gaussbdd}.
Finally, we prove the
Theorem \ref{thm:A} in the introduction.

\begin{tProof}{Proof of Theorem~\ref{thm:A}}
 Let $f\colon{}\U \to (M^3,g)$ be a front and 
 $p\in\U$ a non-degenerate singular point of it.
 Then $p$ is either of the first kind
 or of the second kind.
 Each of these two cases, we can take
 an adapted coordinate system $(u,v)$ centered
 at $p$. Then the signed area density function 
 $\lambda(u,v)$ (cf.\ Definition \ref{def:lambda}) 
 vanishes along the $u$-axis.
 So we can write
 $\lambda(u,v)=v \hat \lambda(u,v)$,
 where $\hat \lambda(u,v)$ is
 a smooth function defined on a sufficiently
 small neighborhood of the $u$-axis,
 and can write
\begin{equation}\label{eq:K3}
   K\, d\hat A
=K \lambda du\wedge dv
=\hat K \hat \lambda\, du\wedge dv,
\end{equation}
 where $\hat K:=vK$
 and
 $K$ is the Gaussian curvature
 of $f$.
 As in the proofs of
 Theorems 
 \ref{thm:main1} and 
 \ref{thm:gaussbdd},
 $\hat K$ is a smooth function
 on a sufficiently small
 neighborhood of
 the $u$-axis, which proves the
 first assertion.
 Moreover, $K\, d\hat A$
 is equal to zero
 only at a point where $\kappa_\nu=0$.
 So we get the equivalency of
 \ref{item:A1} and \ref{item:A3}.

 We next suppose that $\kappa_\nu(p)\ne 0$.
 By the equivalency of
 \ref{item:A1} and \ref{item:A3},
 the equality \eqref{eq:K3}
 yields that
 $\lim_{v\to 0}K(u,v)\lambda(u,v)\ne 0$.
 Since $\lambda(u,v)$ changes sign
 across the $u$-axis,
 we can conclude that
 $K$ is unbounded  
 and changes sign 
 between two sides of the $u$-axis.

 Finally, we consider the case that
 $(M^3,g)$ is the Euclidean 3-space.
 Since $K\,d\hat A$ coincides with the pull-back of the
 area element of the unit sphere by $\nu$,
 as pointed out in Remark \ref{rem:KdA},
 \ref{item:A3} is also equivalent to
 the fact that $p$ is the singular point of $\nu$.
\end{tProof}

We denote by $S^3$ the unit 3-sphere in $\R^4$ centered at the
origin.
For a given front $f:\Sigma^2\to S^3$, 
its unit normal vector field $\nu$
can be considered as a map $\nu:\Sigma^2\to S^3$
using the parallel translations in $\R^4$.
We call this the {\it Gauss map\/} of $f$. 
As a corollary of Theorem \ref{thm:A}, 
we get the following: 

\begin{prop}\label{prop:S3}
 Let $f:\Sigma^2\to S^3$ be a front, and $p\in \Sigma^2$ a
 non-degenerate singular point. Then the 
 Gauss map $\nu:\Sigma^2\to S^3$ of $f$ has a singularity at
 $p$ if and only if the limiting normal curvature $\kappa_\nu(p)$ is
 equal to zero.
\end{prop}

\begin{Proof}
 Since $S^3\subset \R^4$,
 the signed area element of $f$ is written by
 \[
    d\hat A_f=\det(f,f_u,f_v,\nu),
 \]
 where \lq$\det$\rq\ is the determinant function
 on $\R^4$. By using the similar argument
 as in Remark \ref{rem:KdA},
 the Weingarten formula 
 and the Gauss equation 
 $K=1+\det(W)$ yield that
 \[
    d\hat A_\nu=\det(\nu,\nu_u,\nu_v,-f)
        =\det(W) d\hat A_f=(K-1)d\hat A_f,
 \]
 where we took $-f$ as the unit normal 
 vector of $\nu$.
 Since $d\hat A_f$ vanishes at $p$, 
 the equality
 $d\hat A_\nu=(K-1)d\hat A_f=K\, d\hat A_f$
 holds at $p$.
 Then the assertion follows from Theorem \ref{thm:A} and
 the fact that $d\hat A_\nu$ vanishes
 at $p$ if and only if $p$ is a singular point 
 of $\nu$.
\end{Proof}

The hyperbolic space
\[
   H^3:=\biggl\{(t,x,y,z)\in \R^4_1\,;\,
   t^2-x^2-y^2-z^2=1,\,\, t>0\biggr\}
\]
of constant curvature $-1$
is a hyperboloid in the Lorentz-Minkowski $4$-space $\R^4_1$
with signature $(-,+,+,+)$.
Like as in the case of $S^3$,
for an arbitrarily given front $f:\Sigma^2\to H^3$, 
its unit normal
vector field can be considered as the Gauss map 
$\nu:\Sigma^2\to S^3_1$, where
\[
  S^3_1:=\biggl\{(t,x,y,z)\in \R^4_1\,;\,
  t^2-x^2-y^2-z^2=-1\biggr\}
\]
is the de Sitter $3$-space.
If the Gauss map $\nu:\Sigma^2\to S^3_1$
of $f$ is an immersion, then it is
space-like.
We also get the following: 
\begin{prop}\label{prop:H3}
 Let $f:\Sigma^2\to H^3$ be a front, and $p\in \Sigma^2$
 a non-degenerate singular point. Then the Gauss map
 $\nu:\Sigma^2\to S^3_1$ of $f$ has a singularity
 at $p$ if and only if
 the limiting normal curvature $\kappa_\nu(p)$
 vanishes.
\end{prop}

\appendix

\section*{Appendix: The coordinate invariance of blow up}

We give here the procedure of blowing up
and show its
coordinate invariance, which is used 
to define rational boundedness and continuity
in Definition~\ref{def:rational}.

We define the equivalence relation $\sim$
on $\R\times S^1$ by
\[
   (r,\theta)\sim (-r,\theta+\pi),
\]
where $S^1:=\R/2\pi \mathbf Z$.
We set 
$\M:=\R\times S^1/\sim$, namely,
$\M$ is the quotient space of
$\R\times S^1$ by this equivalence relation.
We also denote by
\[
   \pi:\R\times S^1\to \M
\]
the canonical projection.
Let $(\R^2;u,v)$ be the $(u,v)$-plane.
Then there exists a unique
$C^\infty$-map
$\Phi:\M\to \R^2$
such that
\[
   \Phi\circ \pi(r,\theta):=(r\cos\theta,r\sin\theta)
   \qquad ((r,\theta)\in \R\times S^1).
\]
This map $\Phi$ gives the usual
blow up of $\R^2$ at the origin.

From now on, we show that
the coordinate invariance of 
this blow up procedure:
let $(\R^2;U,V)$ be the $(U,V)$-plane,
and consider a diffeomorphism
$f:(\R^2;u,v)\to (\R^2;U,V)$
such that $f(0,0)=(0,0)$.
Then we can write
\[
   f\circ \Phi\circ \pi(r,\theta)
         =(U(r,\theta),V(r,\theta)).
\]
Since $f(0,0)=(0,0)$, it holds that
$U(0,\theta)=V(0,\theta)=0$.
Then the well-known division property
of $C^\infty$-functions yields that
there exist $C^\infty$-function germs
$\xi(r,\theta)$ and $\eta(r,\theta)$
such that
\[
   U(r,\theta)=r \xi(r,\theta),\qquad
   V(r,\theta)=r \eta(r,\theta).
\]
Since $f$ is a diffeomorphism,
one can easily show that
$\xi(0,\theta)^2+\eta(0,\theta)^2$
is positive for all $\theta\in S^1$,
and the $C^\infty$-function
\[
   R(r,\theta):=r \sqrt{\xi(r,\theta)^2+\eta(r,\theta)^2}
\]
is defined on 
$\Omega:=(f\circ\Phi\circ\pi)^{-1}\bigl(\{(U,V)\,;\,
            U^2+V^2<\varepsilon^2\}\bigr)$
for sufficiently small $\varepsilon>0$.
Moreover, there exists a
unique $C^\infty$-function
$\Theta:\Omega\to S^1$
such that
\[
    \cos \Theta(r,\theta)=\frac{\xi(r,\theta)}{%
            \sqrt{\xi(r,\theta)^2+\eta(r,\theta)^2}},
	    \quad
    \sin \Theta(r,\theta)=\frac{\eta(r,\theta)}{%
            \sqrt{\xi(r,\theta)^2+\eta(r,\theta)^2}}.
\]
Then, the $C^\infty$-map
$F:\pi(\Omega)\to \Phi^{-1}
      (\{(U,V)\,;\,U^2+V^2<\epsilon^2\})$
satisfying the property
$ F\circ \pi(r,\theta)=\pi(R(r,\theta),\Theta(r,\theta))$
is uniquely determined, and
satisfies the relation
$F\circ \Phi=\Phi\circ f$.
By our construction, such a map $F$
depends only on $f$.
Hence, by replacing $f$ by $f^{-1}$,
we can conclude that $F$ is a diffeomorphism
for sufficiently small $\epsilon>0$.

Let $p$ be a point on a $2$-manifold $\Sigma^2$. 
The above construction of
$F$ implies that
we can define the \lq blow up\rq\
of the  manifold $\Sigma^2$ at $p$.

\begin{ack}
 The authors thank the referees for careful reading
 and valuable comments.
 The third and the fourth authors thank Toshizumi Fukui for
 fruitful discussions at Saitama University.
 By his suggestion, we obtained the new definition of
 rational boundedness and continuity.
 The second author thanks Shyuichi Izumiya
 for fruitful discussions.
\end{ack}

\end{document}